\documentclass[12pt,oneside,final]{amsart}
\usepackage{stmaryrd,amsthm}
\SetSymbolFont{stmry}{bold}{U}{stmry}{m}{n}
\title{An inverse problem for inhomogeneous Signorini obstacle}
\author{Ziyao Zhao}
\address{Ziyao Zhao: Department of Mathematics and Statistics, University of Helsinki, FI-00014 Helsinki, Finland} 
\email{ziyao.zhao@helsinki.fi}
\IfFileExists{tweakslo.sty}{\usepackage{tweakslo}}{
\usepackage{amssymb,thmtools,mathtools,todonotes,amsmath}
\declaretheorem{theorem,definition,lemma,proposition,corollary,remark}}
\providecommand{\HOX}[1]{\todo[noline]{#1}}

\def\p{\partial}
\def\o2{\overline{O_2}}

\def\R{\mathbb R}

\def\cV{\mathcal V}

\def\cS{\mathcal S}
\def\cR{\mathcal R}
\def\cH{\mathcal H}

\def\oO{\overline{O}}

\def\tH{\widetilde{H}}
\def\tbH{\widetilde{\boldsymbol{H}}}
\def\tr{\text{tr}}
\def\div{\text{div }}

\def\bs{\boldsymbol{\sigma}}
\def\be{\boldsymbol{\varepsilon}}
\def\bn{\boldsymbol{\nu}}
\def\bdeta{\boldsymbol{\eta}}
\def\n{\boldsymbol{n}}
\def\bpsi{\boldsymbol{\psi}}
\def\bphi{\boldsymbol{\phi}}
\def\brho{\boldsymbol{\rho}}

\def\bu{\mathbf{u}}
\def\bv{\boldsymbol{v}}
\def\bw{\boldsymbol{w}}
\def\bV{\boldsymbol{V}}
\def\bf{\boldsymbol{f}}
\def\bg{\boldsymbol{g}}
\def\bh{\boldsymbol{h}}
\def\bH{\boldsymbol{H}}
\def\fR{\mathfrak{R}}
\def\0{\mathbf{0}}
\def\bLambda{\boldsymbol{\Lambda}}
\def\bxi{\boldsymbol{\xi}}
\def\bzeta{\boldsymbol{\zeta}}
\def\iE{\mathit{E}}

\DeclarePairedDelimiter{\norm}{\lVert}{\rVert}
\DeclarePairedDelimiter{\abs}{\lvert}{\rvert}
\DeclarePairedDelimiter{\inner}{\langle}{\rangle}

\DeclarePairedDelimiter{\trinorm}{\interleave}{\interleave}
\newcommand\subsetsim{\mathrel{%
  \ooalign{\raise0.2ex\hbox{$\subset$}\cr\hidewidth\raise-0.8ex\hbox{\scalebox{0.9}{$\sim$}}\hidewidth\cr}}}
  
\begin{document}\maketitle
\begin{abstract}
  This paper investigates the inverse problem of determining a general Signorini obstacle using boundary measurements. We demonstrate that both the shape of the obstacle and the obstacle function can be uniquely determined from solution measurements taken on an arbitrary open subset of the boundary. This result applies to both the scalar and elasticity versions of the Signorini problem.
\end{abstract}
\section{Introduction}
  The Signorini problem, originally formulated by A. Signorini in \cite{Signorini33}, stands as a cornerstone in the study of unilateral contact phenomena in solid mechanics. In its classical formulation, the problem seeks the equilibrium configuration of an elastic body resting on a flat, rigid foundation, where physical impenetrability enforces a strict unilateral constraint on the displacement. However, modern applications necessitate a more general framework that replaces the flat foundation with a surface given by the graph of a function.
  The presence of a non-zero thin obstacle significantly enriches the analytical landscape; because the dynamic transition between contact and separation cannot be globally linearized, the inclusion of an inhomogeneous barrier forces the minimizing solution, alongside its associated free boundary, to adapt to the local geometry, curvature, and regularity of the obstacle \cite{Caffarelli79,GP09,GS14,KRS19}. In this paper, we study the inverse problem of determining the shape of the obstacle and the obstacle function simultaneously from boundary measurements.
  \subsection{Problem setting and Main results}
  We begin by formulating our model problem in the scalar setting, before turning to its counterpart in linear elasticity.
  Let $\Omega \subset \R^n$ be a bounded connected open set with smooth boundary $\p\Omega$. Let the shape of the obstacle be modeled by an open set $O \subset\subset \Omega$ with a smooth boundary $\partial O$ and $\Omega\setminus\oO$ is connected. Let the thin obstacle on $\partial O$ be represented by the function $\varphi \in C^\infty(\partial O)$.
  The scalar Signorini problem for the Laplacian on $\Omega\setminus \oO$ is formulated as finding minimizer $u$ to the functional
  \begin{equation}
    A(u): = \int_{\Omega\setminus \oO} \abs{\nabla u}^2\ dx
  \end{equation}
  on the closed convex set
  \begin{equation}
    \mathcal{K}:= \{u\in H^1(\Omega\setminus\oO)\mid u|_{\p O}\geq \varphi,\ u|_{\p\Omega} =f\}.
  \end{equation}

  The foundational theory regarding the existence and uniqueness of solutions to this variational problem was established by Lions and Stampacchia in \cite{LS67}. Early regularity result by Frehse \cite[Lemma 2.2]{FJ77} demonstrated that the solution to the scalar Signorini problem lies in $H^2$. Soon after Caffarelli \cite{Caffarelli79} showed the $C^{1,\alpha}$ regularity for some $0<\alpha \leq 1/2$. The optimal $C^{1,\frac{1}{2}}$ regularity was subsequently proven by Athanasopoulos and Caffarelli \cite[Theorem 4]{AC04} under the assumption of a zero thin obstacle ($\varphi = 0$), a result that was later generalized to inhomogeneous thin obstacles ($\varphi \neq 0$) by Garofalo and Petrosyan \cite[Theorem 2.3.5]{GP09}. With these regularity results, the minimizer $u$ satisfies the Euler-Lagrangian equations
\begin{equation}
\label{eq:direct_problem_lap}
\begin{cases}
\hfil \Delta u=0 \;\textrm{ in }  \Omega\setminus \oO, \\
\hfil u|_{\partial \Omega}=f, \text{ on }\p\Omega\\
\hfil u\geq \varphi, \;\; \partial_{\bn}u\geq 0, \;\; (u-\varphi)\partial_{\bn} u =0 \textrm{ on } \partial O.    
\end{cases}
\end{equation}
where the normal derivative $\partial_{\bn} u|_{\partial O}$ is taken with respect to the outward unit normal $\bn$ for $\Omega\setminus \oO$ at $\partial O$.
\HOX{Actually the disjoint set here does not make any difference due to the unique continuation. Should I keep this setting or just let $\cR = \cS$?}

Let us denote the outward unit normal for $\Omega\setminus \oO$ at $\p \Omega$ by $\n$ and let $\cS,\cR$ be non-empty open subsets of $\p\Omega$.
we define the local Dirichlet-to-Neumann map
\begin{align}
  \label{eq:def_DN}
  \Lambda : C^\infty_0&(\cS) \to C^0(\cR),\\
  \Lambda f &= \p_{\n} u^f |_{\cR},
\end{align} 
where $u^f$ solves \eqref{eq:direct_problem_lap} with the Dirichlet boundary condition $f\in C^\infty_0(\cS)$.

It was shown in \cite{HLLOZ25} that the shape of a flat obstacle $O$, i.e. $\varphi = 0$, can be uniquely determined using a single boundary measurement up to a natural gauge. However, this is impossible without a prior knowledge that $\varphi = 0$ due to the inadequacy of the data, even if the measurement is made on the whole boundary, see Example \ref{example_lap_1} in Section \ref{sec:examples}.

In this paper, we show that one can determine the inhomogeneous obstacle function $\varphi$ and the shape of obstacle $O$ simultaneously with the Dirichlet-to-Neumann map $\Lambda$.
\begin{theorem}
\label{main-Laplace}
\it Let $\Omega\subset \mathbb{R}^n$ be a bounded connected open set with smooth boundary and $\Gamma$ be an open subset of $\p\Omega$. Let $O_1,O_2\subset\subset \Omega$ be (possibly empty) open subsets with smooth boundary, and $\varphi_1\in C^\infty(\p O_1)$, $\varphi_2\in C^\infty(\p O_2)$.
Assume that $\Omega\setminus \overline{O_1},\Omega\setminus \overline{O_2}$ are connected. 
Let $\Lambda_1$, $\Lambda_2$ be the Dirichlet-to-Neumann map defined in \eqref{eq:def_DN} with obstacles $O_1,\varphi_1$ and $O_2,\varphi_2$, respectively.
If $\Lambda_1=\Lambda_2$, then $O_1=O_2$ and $\varphi_1 = \varphi_2$ .
\end{theorem}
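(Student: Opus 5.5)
Since $\Lambda_1=\Lambda_2$ and the data are Cauchy data on $\cR$, the plan is to first use unique continuation. For $f\in C^\infty_0(\cS)$, let $u_j=u^f_j$. On $\cR$ both functions have the same Dirichlet trace: $f|_{\cR}$ if the Dirichlet data is prescribed on all of $\p\Omega$ with $f$ extended by zero. They also have the same Neumann trace. So $w=u_1-u_2$ is harmonic in the connected set $G$, the connected component of $\Omega\setminus(\overline{O_1}\cup\overline{O_2})$ whose boundary contains $\p\Omega$, and has zero Cauchy data on $\cR$. Unique continuation then gives $u_1=u_2$ in $G$ for every $f$.

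\textbf{Shape.} Suppose $O_1\neq O_2$. Using connectedness of $\Omega\setminus\overline{O_j}$, we find, after possibly relabelling, a point $x_0\in\p O_1\cap \p G$ with $x_0\notin\overline{O_2}$. Near $x_0$, $u_2$ is harmonic across $\p O_1$, while $u_1=u_2$ on the $G$ side. The key step is to choose boundary data $f$ that force contact at $x_0$. Take $f\le -M$ on a large part of $\cS$, with $M$ large. Then by the maximum principle (comparison for the Signorini problem), $u_1$ is pushed down until it touches $\varphi_1$ near $x_0$. On the contact set, $u_1=\varphi_1$ and $\p_{\bn}u_1\ge 0$. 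Off the contact set, $\p_{\bn}u_1=0$, so $u_1|_{\p O_1}$ satisfies the Neumann condition. Near $x_0$, $u_2$ continues harmonically into $O_1$.

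I would derive a contradiction by varying $f$. Since the Signorini problem is homogeneous, take $f=t g$ with $g\ge 0$ and $t\to+\infty$. Then $u_1$ does not touch the obstacle, so $u_1$ is the pure Neumann-problem solution near $\p O_1$. Hence $u_2=u_1$ has $\p_{\bn}u_2=0$ on an open piece of $\p O_1$, for all such $g$ in a set of data spanning an infinite-dimensional space. With $t\to-\infty$, contact occurs on a set with nonempty interior near $x_0$, so there $u_2=\varphi_1$ and $\p_\bn u_2\ge0$. Both Cauchy data on a piece of $\p O_1$ are then fixed: for the positive family, $\p_\bn u_2=0$, and after subtracting two such solutions, a difference with zero Neumann data and arbitrary Dirichlet data. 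I would use Runge approximation for the region $\Omega\setminus\overline{O_2}$, where $u_2$ is unconstrained away from $O_2$ when $\varphi_2$ is not contacted ($t$ large positive), to get harmonic functions in $\Omega\setminus\overline{O_2}$ that approximate functions with nonzero normal derivative at $x_0$. This contradicts $\p_\bn u_2=0$ on $\p O_1$ near $x_0$. Hence $O_1=O_2$.

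\textbf{Obstacle function.} With $O:=O_1=O_2$, $u_1=u_2=:u$ in all of $\Omega\setminus\oO$ for every $f$. Fix $x\in\p O$. Choose $f=-tg$ with $t$ large. The claim is that $x$ then lies in the interior of the contact set for both problems. A lower bound on $u_j$ gives $u_j\to-\infty$ away from contact, which forces contact everywhere on $\p O$ for large $t$. This uses boundedness of $\varphi_j$ and comparison with the unconstrained solution. Then $\varphi_1(x)=u(x)=\varphi_2(x)$.

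\textbf{Main obstacle.} The hardest step is the shape step: constructing data that forces, or prevents, contact at a prescribed boundary point, and making the comparison principle quantitative enough that the contact set has nonempty interior. I would first try global full contact via large negative data, as in the last step, which avoids localisation entirely.
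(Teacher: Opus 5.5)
Your route differs from the paper's in both halves, and it can be completed into a valid and shorter scalar proof. As written, however, the two claims that control contact, which you flag yourself as the main obstacle, are not proved, and one rests on a false statement.

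\textbf{Shape.} The Signorini problem is not homogeneous when $\varphi\neq 0$, so homogeneity does not give non-contact for $f=tg$. The correct mechanism is positivity. For $g\in C^\infty_0(\cS)$ with $g\ge 0$, $g\not\equiv 0$, let $v_j^g$ be the mixed solution with Dirichlet data $g$ and zero Neumann data on $\p O_j$. Then $\min_{\p O_j}v_j^g>0$ by the weak maximum principle and Hopf's lemma, since a minimum on $\p O_j$ would force $\p_{\nu_j}v_j^g<0$.
\begin{itemize}
\item For $t$ large, $tv_j^g>\varphi_j$ on $\p O_j$, so by uniqueness $u_j^{tg}=tv_j^g$ for $j=1,2$ simultaneously.
\item From $u_1^{tg}=u_2^{tg}$ in $G$ you get $v_1^g=v_2^g$ in $G$. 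Nonnegative functions span $C^\infty_0(\cS)$: write $f=(f+C\chi)-C\chi$ with $\chi\ge0$ a cutoff equal to $1$ on the support of $f$. Linearity then gives $v_1^f=v_2^f$ in $G$ for every $f$.
\item This reduces the question to the linear sound-hard problem, and your Runge step works. For a small ball $B\ni x_0$ with $\overline B\subset\Omega\setminus\overline{O_2}$, one has $B\setminus\overline{O_1}\subset G$, so $\p_{\nu_1}v_2^f=0$ on $\p O_1\cap B$ for all $f$.
\item Density of $\{v_2^f|_B\}$ among harmonic functions on $B$ (duality plus unique continuation as in Proposition~\ref{prop:approxi_control_lap}, then interior estimates) gives $f$ with $v_2^f$ $C^1$-close to $x\mapsto x\cdot\nu_1(x_0)$. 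This contradicts $\p_{\nu_1}v_2^f(x_0)=0$.
\end{itemize}
Your choice of $x_0$ is legitimate. If it failed for both labellings, then $\p G\setminus\p\Omega\subset\p O_1\cap\p O_2$, and connectedness of $\Omega\setminus\overline{O_j}$ would force $G=\Omega\setminus\overline{O_1}=\Omega\setminus\overline{O_2}$. The passages about $f\le -M$ and contact near $x_0$ as $t\to-\infty$ are not needed.

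\textbf{Obstacle function.} Full contact does hold, but ``a lower bound gives $u_j\to-\infty$ away from contact'' is not an argument: on $\p O$ you always have $u_j\ge\varphi_j$, and a lower bound cannot produce contact. Prove it explicitly.
\begin{itemize}
\item Let $w$ be harmonic in $\Omega\setminus\oO$ with $w=-g$ on $\p\Omega$ and $w=0$ on $\p O$. Then $w<0$ inside, and Hopf gives $\p_\bn w\ge c>0$ on $\p O$.
\item Let $h_j$ be harmonic with $h_j=0$ on $\p\Omega$ and $h_j=\varphi_j$ on $\p O$. Then $tw+h_j$ equals $\varphi_j$ on $\p O$ and has positive normal derivative once $t>\norm{\p_\bn h_j}_{L^\infty(\p O)}/c$.
\item So $tw+h_j$ satisfies the Signorini conditions and equals $u_j^{-tg}$ by uniqueness. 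Hence $\varphi_j=u_j^{-tg}|_{\p O}$, and $u_1^{-tg}=u_2^{-tg}$ in $\Omega\setminus\oO$ gives $\varphi_1=\varphi_2$.
\end{itemize}

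\textbf{Comparison with the paper.} For the shape, the paper builds non-contact data only for $O_1$, using the approximate controllability of Proposition~\ref{prop:approxi_control_lap}. It then has to handle a possibly contacting $u_2^g$ through the rigidity Lemma~\ref{lm:lap_variational} on the finite-perimeter set $\cV$. For $\varphi$, it shows $\p_\bn u=0$ where $\varphi_1\neq\varphi_2$, then combines Lemma~\ref{lm:Sig_mix_lap} with controllability to build data that dips below $\varphi_2$ at one point.

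Your version replaces this with two scaling regimes controlled by the maximum principle: no contact for large positive data, full contact for large negative data. It is more elementary, and it identifies $\varphi$ as the trace of a single solution. What the paper's machinery buys is that it never uses a maximum principle, so the same scheme carries over to the Lam\'e system in Theorem~\ref{main-Elasticity}. There Hopf-type positivity is unavailable, and your scaling arguments have no direct analogue.
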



Next we formulate our model problem in linear elasticity system. Let us denote by $\bu:\Omega\setminus \overline{O}\to \R^n$ the displacement vector of the elastic body and by $\be(\bu)=\frac{1}{2}(\nabla \bu+(\nabla \bu)^T)$ the linearized strain tensor.
Then the stress tensor for the Lam\'e system is given by 
\begin{equation}
    \bs(\bu)=2\mu \be(\bu)+\lambda\, \tr(\be(\bu))\boldsymbol{I}_n,
\end{equation}
The classical formulation of the Signorini problem for the isotropic linearized elastic system, with prescribed smooth displacement $\boldsymbol{f}$ on the exterior boundary $\partial \Omega$, is finding minimizer $\bu$ to the functional
\begin{equation}
  \boldsymbol{A}(\bu) : = \int_{\Omega\setminus \oO} \bs(\bu) : \be(\bu) \ dx
\end{equation}
over the closed convex set
\begin{equation}
  \boldsymbol{\mathcal{K}} := \{\bu\in (H^1(\Omega\setminus\oO))^n \mid \bu_\nu|_{\p O}\leq \varphi,\ \bu|_{\p\Omega} =\bf\},
\end{equation}
where $\bu_\nu|_{\p O} :=\bu|_{\p O}\cdot\bn$ is the normal displacement on $\p O$.

The foundational questions of existence and uniqueness for this problem were first resolved by Fichera \cite{Fichera63}, with further comprehensive investigations by Duvaut and Lions \cite{DL76}. The pioneering result in regularity was obtained by Kinderlehrer \cite{KD81}, who demonstrated local $C^{1,\alpha}$ regularity for the two-dimensional setting ($n=2$). This was later generalized to arbitrary dimensions by Schumann \cite{schumann1989}, who achieved $C^{1,\alpha}$ regularity by reducing the coupled Lam\'e system to a scalar equation. The critical threshold of optimal $C^{1,\frac{1}{2}}$ regularity was subsequently established by Andersson \cite{Andersson16} for the homogeneous Lamé system with a flat obstacle. Andersson developed a novel approach that does not rely on the comparison principle and thus directly applicable to systems. More recently, R\"uland and Shi \cite{RS22} extended this optimal $C^{1,\frac{1}{2}}$ regularity to encompass the inhomogeneous Lamé system by leveraging the structural connection between the elastostatic Signorini problem and the obstacle problem for the half-Laplacian. By virtue of these regularity results, the minimizer $\bu$ satisfies the Euler-Lagrangian equations

\begin{equation}
    \label{eq:direct_problem_ela}
    \begin{cases}
    \hfil  \div \bs(\bu)=0, \text{ in }\Omega\setminus \overline{O},\\
    \hfil \bu=\boldsymbol{f}, \text{ on }\p\Omega,\\
    \hfil \bs(\bu)_\tau=0,\ \bu_\nu \leq \varphi,\ \bs(\bu)_\nu \leq 0,\ (\bu_\nu-\varphi)\bs(\bu)_\nu=0,\ \text{ on }\p O,
    \end{cases}
  \end{equation}  
where $\bs(\bu)_\nu=\bs(\bu)\bn\cdot \bn$ is the normal stress, and $\bs(\bu)_\tau=\bs(\bu)\bn-\bs(\bu)_\nu \bn$ is the tangential stress (being zero due to no friction).
We define the corresponding local Dirichlet-to-Neumann operator as
\begin{align}
  \label{eq:def_DN_ela}
  \bLambda : (C^\infty_0&(\cS))^n \to (C^0(\cR))^n,\\
  \bLambda \bf &= \bs(\bu^{\bf})\n |_{\cR},
\end{align}
where $\bu^{\bf}$ solves \eqref{eq:direct_problem_ela} with the Dirichlet boundary condition $\bf$. Our main result is stated as below.

\begin{theorem}
\label{main-Elasticity}
\it Let $\Omega\subset \mathbb{R}^n$ be a bounded connected open set with smooth boundary and $\Gamma$ be an open subset of $\p\Omega$. Let $O_1,O_2\subset\subset \Omega$ be (possibly empty) open subsets with smooth boundary, and $\varphi_1\in C^\infty(\p O_1)$, $\varphi_2\in C^\infty(\p O_2)$.
Assume that $\Omega\setminus \overline{O_1},\Omega\setminus \overline{O_2}$ are connected. 
Let $\bLambda_1$, $\bLambda_2$ be the Dirichlet-to-Neumann map defined in \eqref{eq:def_DN_ela} with obstacles $O_1,\varphi_1$ and $O_2,\varphi_2$, respectively.
If $\bLambda_1=\bLambda_2$, then $O_1=O_2$ and $\varphi_1 = \varphi_2$ .
\end{theorem}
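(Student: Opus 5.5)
The plan follows the scalar case (Theorem \ref{main-Laplace}), adapted to the Lamé system. There are three steps. First, unique continuation for the Lamé system identifies the two solutions in the unbounded component $G$ of $\Omega\setminus(\overline{O_1}\cup\overline{O_2})$. Next, we show $O_1=O_2$ by contradiction, using boundary data that forces contact on a suitable part of the obstacle. Finally, we read off $\varphi$ from the contact set, after producing, for each point of $\p O$, some datum that makes that point a contact point.

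\textbf{Step 1 (unique continuation).} Fix $\bf\in (C^\infty_0(\cS))^n$ and let $\bu_j=\bu_j^{\bf}$ solve \eqref{eq:direct_problem_ela} with obstacle $(O_j,\varphi_j)$. Put $\bw=\bu_1-\bu_2$. On the connected component $G$ of $\Omega\setminus(\overline{O_1}\cup\overline{O_2})$ whose boundary contains $\p\Omega$, the field $\bw$ solves $\div\bs(\bw)=0$. On $\cR\cap\cS$ (or, more generally, via the Cauchy data on the common part $\Gamma$) we have $\bw=0$ and $\bs(\bw)\n=0$. Unique continuation for the Cauchy problem of the Lamé system with constant Lamé parameters then gives $\bw\equiv 0$ in $G$.

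If $\cR$ and $\cS$ were disjoint, I would first extend: since $\bu_1=\bu_2=0$ on $\cR$, the traction data coincide there, and this suffices. Here I take the Dirichlet-to-Neumann map to be known on a common open set; this is the point the author's remark hints at.

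\textbf{Step 2 ($O_1=O_2$).} Suppose $O_1\neq O_2$. Then, possibly after swapping indices, there is a point $x_0\in\p O_1\cap\p G$ that lies outside $\overline{O_2}$. Near $x_0$ the solution $\bu_2$ is a smooth solution of the Lamé system across $\p O_1$, because $x_0$ lies in the interior of $\Omega\setminus\overline{O_2}$. Meanwhile $\bu_1=\bu_2$ on the $G$ side.

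Hence on a small patch $\Sigma\subset\p O_1$ around $x_0$ we get $\bs(\bu_2)_\tau=0$, $(\bu_2)_\nu\le\varphi_1$, $\bs(\bu_2)_\nu\le 0$, and complementarity. This holds for every $\bf$.

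To reach a contradiction, I would choose $\bf$ that forces contact on $\Sigma$ for obstacle $1$. For example, take $\bf=t\bh$ with $t\to\infty$ and $\bh$ pushing the body toward $O_1$, so that $\bs(\bu_1)_\nu<0$ somewhere on $\Sigma$. Alternatively, use linearity: on the non-contact set the solution is a genuine Lamé solution, and exploiting two data $\bf$ and $t\bf$ would show that the traction $\bs(\bu_2)\bn$ vanishes on an open part of $\Sigma$ for all data. Together with $\bw=0$, this leads (by unique continuation across $\Sigma$) to a Lamé solution in $\Omega\setminus\overline{O_2}$ with zero traction on an interior hypersurface for every $\bf$. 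Approximating arbitrary Lamé solutions by Runge-type density contradicts this.

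\textbf{Step 3 ($\varphi_1=\varphi_2$).} With $O:=O_1=O_2$, Step 1 gives $\bu_1=\bu_2$ in all of $\Omega\setminus\overline O$. Then, for every $\bf$, on the contact set $\{\bs(\bu)_\nu<0\}$ we have $\varphi_1=\bu_\nu=\varphi_2$.

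It remains to show that every point $y\in\p O$ lies in the closure of the strict-contact set for some $\bf$. I would argue by contradiction. If $\varphi_1\neq\varphi_2$ near $y$, then near $y$ every solution satisfies $\bs(\bu)_\nu=0$ and $\bs(\bu)_\tau=0$, i.e.\ zero traction on an open patch $U\subset\p O$ for all $\bf$.

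Now take $\bf$ large. Runge approximation for the Lamé system shows that the solutions of the Neumann problem on the part $U$ cannot realise displacements with $\bu_\nu>\varphi$ strongly everywhere; equivalently, one constructs $\bf$ whose unconstrained solution (zero traction on $\p O$) violates $\bu_\nu\le\varphi_j$ at $y$. This forces contact near $y$, a contradiction.

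\textbf{Main obstacle.} The hardest step is this density/forcing argument: producing, from boundary data supported in $\cS$ only, solutions that actually touch the obstacle at a prescribed point. Because the problem is nonlinear, I would first attempt it with the scaling $\bf=t\bh$, where $t\to\infty$ makes $\varphi$ negligible and reduces the question to the homogeneous problem ($\varphi=0$) treated in \cite{HLLOZ25}.
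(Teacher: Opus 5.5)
Step~1 is fine, and the overall layout (shape first, then $\varphi$) matches the paper. The gap is in Step~2, where you never produce an actual contradiction. Forcing contact on $\Sigma$ contradicts nothing. A Lam\'e field smooth across $\Sigma$ with $(\bu_2)_{\nu_1}=\varphi_1$, $\bs(\bu_2)_\tau=0$ and $\bs(\bu_2)_{\nu_1}\le 0$ on a patch is perfectly consistent, since these are $n$ boundary conditions, not Cauchy data. Your alternative claim, that comparing $\bf$ and $t\bf$ makes the traction of $\bu_2$ vanish on $\Sigma$ for all data, is unjustified. For $\varphi_j\neq 0$ the data-to-solution map is not even positively homogeneous, and you never exhibit a regime where it is linear.

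The paper does the opposite of forcing contact. Using density of traces of traction-free solutions (Proposition~\ref{prop:approximate_control_ela}), it picks $\bg$, and $\bh=\bg+\varepsilon\bdeta$ with $\bdeta\notin\fR$, for which the constraint on $\p O_1$ is strictly inactive. Then $\bu_1=\bv_1$ is traction-free on $\p O_1$, and through $G$ so is $\bu_2$ on $\p^\ast\cV\setminus\p O_2$. Combined with the Signorini condition of $\bu_2$ on $\p^\ast\cV\cap\p O_2$, both $\bu_2^{\bg}$ and $\bu_2^{\bh}$ solve one Signorini problem on $\cV$ with the patched obstacle $\psi$. Testing each inequality of Lemma~\ref{lm:ela_variational} with the other solution gives $\be(\bu_2^{\bh}-\bu_2^{\bg})=0$ in $\cV$. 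So the difference is a rigid motion, and unique continuation gives $\varepsilon\bdeta\in\fR$, a contradiction.

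Your local picture near $x_0$ could be repaired in the same spirit. The condition $\bs(\bu_2^{\bf})_\tau=0$ on $\Sigma$ holds for every $\bf$, being inherited from $\bu_1$. On a ball of data where the constraint on $\p O_2$ is inactive, $\bu_2^{\bf}$ is linear in $\bf$. Hence every traction-free solution in $\Omega\setminus\overline{O_2}$ would have zero tangential traction on $\Sigma$, which Runge density rules out. You did not make this argument, though. Your $t\to\infty$ scaling could plausibly reduce the shape question to \cite{HLLOZ25}, via stability of the variational inequality as $\varphi_j/t\to 0$. That limit would have to be proved, and it discards $\varphi$ entirely.

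Step~3 has the analogous gap. You correctly get zero traction on the set $\Gamma_s$ where $\varphi_1\neq\varphi_2$. But a datum whose traction-free solution violates $\bu_\nu\le\varphi$ at $y$ does not ``force contact near $y$''. Contact elsewhere on $\p O$ changes the solution globally, so nothing is forced at $y$.

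The paper uses approximate controllability with target $\bpsi+C\bphi$, where $\bphi$ is supported in $\Gamma_s$. This yields a datum whose traction-free solution $\bv^{\bf}$ violates the constraint at $y\in\Gamma_s$ while being strictly admissible on $\p O\setminus\Gamma_s$. Then $\bv^{\bf}$ and $\bu_2^{\bf}$ both solve the mixed problem \eqref{eq:thm6_1}: traction-free on $\Gamma_s$, Signorini on the rest. Uniqueness for that problem gives $\bu_2^{\bf}=\bv^{\bf}$, which violates the Signorini constraint at $y$. Your sketch is missing both ingredients: strict admissibility away from $\Gamma_s$, and uniqueness of the auxiliary mixed problem.
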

\subsection{Literature review and related results}
The classical inverse obstacle problem seeks to detect unknown inclusions through their interaction with probing fields, such as acoustic, electromagnetic, or gravitational waves \cite{Isakov09,CK18}. Analytically, the physical properties of the inclusion dictate fixed boundary conditions imposed on its surface: sound-soft obstacles require Dirichlet conditions, impenetrable impedance media use Robin conditions, and perfect conductors enforce the vanishing of the tangential trace \cite{CK19}. The foundational uniqueness theorem for obstacle identification was originally established by Schiffer within the context of acoustic scattering \cite{LP67}. Following this, a substantial body of literature has been dedicated to establishing uniqueness and stability for obstacles with fixed boundary conditions using finite boundary measurements \cite{AR01,BV99,Kress04,Rundell08,Bacchelli09}. Beyond pure shape reconstruction, an alternative paradigm couples the unknown obstacle with the surrounding medium's coefficients, recovering the internal structure from the Dirichlet-to-Neumann map. This approach, fundamentally linked to Calder\'on's problem, was pioneered by Isakov \cite{ISakov88} and subsequently advanced by Ikehata \cite{Ikehata98} via a constructive proof. Exploiting this structural equivalence, classical tools from inverse conductivity problems—namely, complex geometrical optics and Runge approximation (see \cite{FSU25} and references therein)—were successfully adapted to recover obstacles from the Dirichlet-to-Neumann map \cite{IINSU07,UW07,UWW09,NUW05}. Despite this extensive body of work, the study concerning inverse problems for obstacles subject to free boundary conditions remains limited; to the author's knowledge, the only existing paper on this topic is \cite{HLLOZ25}.

The Signorini condition provides the standard mathematical formulation for the frictionless contact of an elastic body \cite{Ciarlet88,KO88,CHR23}. Beyond its origins in solid mechanics, the scalar version of the problem has attracted considerable attention in recent decades due to its connections with various other areas of mathematics; see \cite{CV10,CDM16,ROS16,ROS18,Fernandez22} and references therein. The central mathematical complexity of the Signorini problem stems from the presence of a free boundary, which arises naturally since the actual contact region is unknown a priori. Motivated by these analytical challenges, much of the existing literature has focused on establishing the regularity of this free boundary \cite{ACS08,DS16,KPS15,FJ21,FRS20,FS18,FR21,FT23}. The presence of a free boundary also severely complicates the associated inverse problem. Unlike standard formulations with fixed boundary conditions, where the inhomogeneous obstacle function can be determined via unique continuation from outer boundary, the free boundary effectively shields the obstacle, making it generally unobservable from the outer boundary. As illustrated in Example \ref{example:lap_2} of Section \ref{sec:examples}, the obstacle function generally cannot be seen from the Dirichlet-to-Neumann map restricted to any bounded set.

Our proof is based on the approximate boundary controllability of the elliptic mixed Dirichlet-Neumann boundary value problem, classically known as the Zaremba problem since it was first considered by Zaremba in \cite{Zaremba10}. A well-known technical challenge associated with this boundary condition is that solutions typically exhibit singular behavior, failing to be globally smooth even when the domain and prescribed data are perfectly smooth \cite[Problem 3.2.15]{Kenig94}. Consequently, the regularity theory for the Zaremba problem has generated a substantial body of literature; we refer the reader to \cite{AK82,L86,L89,S97,LCB08} and the references therein. In the context of controllability, a Carleman estimate for the Zaremba problem was derived in \cite{CR14} to prove null controllability for its parabolic counterpart. This framework was subsequently adapted in \cite{ZOZ18} to obtain a conditional exact controllability result. To the best of our knowledge, the present work serves as the first application of boundary controllability for the elliptic Zaremba problem to the field of inverse problems.

\subsection{Outline of the paper} The paper is organized as follows. Section \ref{sec:mixed_lap} is dedicated to the regularity theory of a mixed boundary value problem, prescribing Dirichlet and Neumann conditions on $\p \Omega$ and $\p O$, respectively. Furthermore, we establish a high-order approximate controllability result by exploiting the unique continuation property of the corresponding dual system. In Section \ref{sec:lap_main}, we present the proof of Theorem \ref{main-Laplace}, which proceeds in two main steps: we first prove that the Dirichlet-to-Neumann map uniquely determines the shape of the obstacle, and then show that it also determines the inhomogeneous obstacle function. Section \ref{sec:mixed_ela} extends this analysis to the vectorial case, investigating the regularity and boundary controllability for the Lam\'e system. Building upon these foundations, Section \ref{sec:ela_main} establishes Theorem \ref{main-Elasticity} through a method analogous to that of the scalar setting. Finally, Section \ref{sec:examples} provides several counterexamples illustrating the necessity of using the full Dirichlet-to-Neumann map to ensure uniqueness.

\subsection{Acknowledgements} The author would like to thank Lauri Oksanen for helpful discussions. The author was supported by the Finnish Ministry of Education and Culture’s Pilot for Doctoral Programmes (Pilot project Mathematics of Sensing, Imaging and Modelling).
\section{Mixed boundary value problem}
\label{sec:mixed_lap}
This section is devoted to showing the well-posedness and the boundary approximate controllability of the following mixed boundary value problem.
\begin{equation}
  \label{eq:mixed_lap}
  \begin{cases}
    \Delta v = \psi,\text{ in }\Omega \setminus \oO,\\
    v|_{\p\Omega} = f,\ \p_{\bn} v|_{\p O} = g.
  \end{cases}
\end{equation}


In general, singularities may occur at the interface between regions where Dirichlet and Neumann conditions are imposed separately(see \cite[Section 8.2.2]{Grisvard_Ell}). Consequently, the solution to the Zaremba problem admits only local regularity, making it impossible to obtain global smoothness regardless of how regular the data may be.
However, as $\p O$ and $\p \Omega$ are disjoint in our case, the singular solutions will not arise and we can expect the regularity comparable to the pure boundary problems.
\begin{lemma}
  \label{lm:mixed_lap_s_positive}
  Let $s\geq 0$ and $\psi\in H^s(\Omega\setminus\oO)$, $f\in H^{s+\frac{3}{2}}(\p\Omega)$, $g\in H^{s+\frac{1}{2}}(\p O)$. Then \eqref{eq:mixed_lap} admits a unique solution $u\in H^{s+2}(\Omega\setminus \oO)$. Moreover, we have the estimate
  \begin{equation}
    \norm{u}_{H^{s+2}(\Omega\setminus \oO)}\lesssim \norm{\psi}_{H^{s}(\Omega\setminus\oO)} + \norm{f}_{H^{s+\frac{3}{2}}(\p \Omega)} + \norm{g}_{H^{s+\frac{1}{2}}(\p O)}.
  \end{equation}
\end{lemma}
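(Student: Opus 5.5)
We prove existence and uniqueness at the $H^1$ level first, and then get higher regularity by a partition of unity that separates the two disjoint boundary components $\p\Omega$ and $\p O$.

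\textbf{Step 1: weak solution.} Since $\p\Omega$ is smooth, the trace operator $H^1(\Omega\setminus\oO)\to H^{1/2}(\p\Omega)$ has a bounded right inverse. Pick $F\in H^{s+2}(\Omega\setminus\oO)$ with $F|_{\p\Omega}=f$, $\p_{\bn}F|_{\p O}=0$ and
\[
\norm{F}_{H^{s+2}}\lesssim\norm{f}_{H^{s+3/2}(\p\Omega)}.
\]
For instance, take a lift supported in a collar of $\p\Omega$ disjoint from $\oO$. Set $w=v-F$. Then $w$ solves the problem with Dirichlet datum $0$ on $\p\Omega$, Neumann datum $g$ on $\p O$, and right-hand side $\psi-\Delta F$.

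Let $V=\{w\in H^1(\Omega\setminus\oO): w|_{\p\Omega}=0\}$. The weak formulation is
\[
\int\nabla w\cdot\nabla\phi\,dx=-\int(\psi-\Delta F)\phi\,dx+\int_{\p O}g\phi\,dS \quad\text{for all }\phi\in V.
\]
Signs are adjusted to the orientation of $\bn$. The Poincaré inequality holds on $V$, because $\Omega\setminus\oO$ is connected and the Dirichlet condition is imposed on a set of positive measure. So the bilinear form is coercive on $V$. The right-hand side is a bounded functional, since $g\in H^{1/2}\subset H^{-1/2}(\p O)$ and the trace maps $V$ into $H^{1/2}(\p O)$. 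Lax–Milgram gives a unique $w\in V$ with $\norm{w}_{H^1}$ bounded by the data. Uniqueness of $v$ follows by testing the homogeneous problem with the solution itself.

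\textbf{Step 2: localization.} Because $\p O\cap\p\Omega=\emptyset$, choose cutoffs $\chi_1,\chi_2\in C^\infty$ with $\chi_1+\chi_2=1$ on $\Omega\setminus\oO$, where $\chi_1=1$ near $\p\Omega$ and vanishes near $\oO$, and $\chi_2=1$ near $\p O$ and vanishes near $\p\Omega$.

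Then $\chi_1 w$ solves a pure Dirichlet problem on a smooth domain with interior boundary away from $\oO$, or one may simply view it on $\Omega$ after extension by zero. Its equation is
\[
\Delta(\chi_1w)=\chi_1(\psi-\Delta F)+2\nabla\chi_1\cdot\nabla w+w\Delta\chi_1,
\]
with boundary datum $0$.

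Likewise $\chi_2 w$ solves a pure Neumann problem near $\p O$. Its Neumann datum is $g$, since $\chi_2\equiv1$ there, and it satisfies an analogous equation. One can regard it as a problem on the smooth bounded domain $B\setminus\oO$, with $B$ a ball containing the support of $\chi_2$ and a zero Dirichlet condition on $\p B$; this is again a disjoint mixed problem, but now with smooth cutoff. A cleaner alternative is to use the standard local boundary regularity via difference quotients or flattening, as in Grisvard's treatment or Agmon–Douglis–Nirenberg, for Dirichlet and Neumann conditions separately.

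\textbf{Step 3: bootstrap.} Apply classical elliptic regularity for each piece:
\[
\norm{\chi_1w}_{H^{k+2}}\lesssim\norm{\Delta(\chi_1w)}_{H^k},
\]
\[
\norm{\chi_2w}_{H^{k+2}}\lesssim\norm{\Delta(\chi_2w)}_{H^k}+\norm{g}_{H^{k+1/2}(\p O)}+\norm{\chi_2w}_{L^2}.
\]
The commutator terms involve $\nabla w$ and $w$, so they lie in $H^{k}$ once $w\in H^{k+1}$. Inducting from $k=0$ (using $w\in H^1$) up to $k=s$ gives $w\in H^{s+2}$ for integer $s$. Non-integer $s$ follows by interpolation: the solution operator is linear and bounded between the integer-level spaces, and the data spaces interpolate correctly. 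Alternatively, fractional-order elliptic regularity for the smooth Dirichlet and Neumann problems can be invoked directly.

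Collecting terms yields the estimate, with the lower-order $\norm{w}_{H^1}$ absorbed by Step 1.

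\textbf{Main obstacle.} The only delicate point is justifying the local Neumann regularity estimate near $\p O$ with fractional $s$ and with the lower-order term controlled. I expect to handle it by citing the standard regularity for smooth Neumann problems (Lions–Magenes) together with interpolation. The disjointness of $\p O$ and $\p\Omega$ ensures that no Zaremba-type singularity arises.
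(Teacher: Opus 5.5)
Your proposal is correct and follows essentially the same route as the paper. Both lift the data, obtain the $H^1$ solution from Lax--Milgram and the Poincar\'e inequality on functions vanishing on $\partial\Omega$, and then use $\partial O\cap\partial\Omega=\emptyset$ to reduce to Dirichlet regularity near $\partial\Omega$ and Neumann regularity near $\partial O$. The paper does the last step by citing a local boundary regularity theorem on $\Omega\setminus\overline{U_1}$ and $U_2\setminus\overline{O}$ rather than with explicit cutoffs and a bootstrap.

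The remaining differences are cosmetic: you lift only $f$ and keep $g$ in the weak form. Your interpolation step for non-integer $s$ also makes explicit a point the paper leaves to the cited regularity result.
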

\begin{proof}
  According to the trace theorem, there exists a function $\phi\in H^{s+2}(\Omega\setminus\oO)$ such that $\phi|_{\p\Omega} = f$ and $\p_{\bn} \phi|_{\p O} = g$, and 
  \begin{equation}
    \norm{\phi}_{H^{s+2}(\Omega\setminus \oO)}\lesssim \norm{f}_{H^{s+\frac{3}{2}}(\p \Omega)} + \norm{g}_{H^{s+\frac{1}{2}}(\p O)}.
  \end{equation}
  Then $w : = u-\phi$ solves 
  \begin{equation}
    \label{eq:test_mix_lap}
    \begin{cases}
      \Delta w = \psi - \Delta \phi,\text{ in }\Omega \setminus \overline{O},\\
      w|_{\p\Omega} = 0,\ \p_{\bn} w|_{\p O} = 0.
    \end{cases}
  \end{equation}
  We define the space 
  \begin{equation}
    H^1_D := \{u\in H^1(\Omega\setminus\oO)\mid u|_{\p\Omega} = 0\}.
  \end{equation}
  Then variational problem for \eqref{eq:test_mix_lap} amounts to finding $w\in H^1_D(\Omega\setminus\oO)$ such that 
  \begin{equation}
    \label{eq:test_lap_var}
    \int_{\Omega\setminus\oO} \nabla w\cdot \nabla v\ dx =  - \int_{\Omega\setminus \oO} (\psi - \Delta \phi) v\ dx,\text{ for all }v\in H^1_D(\Omega\setminus\oO).
  \end{equation}
  According to Lax-Milgram theorem (see e.g. \cite[Corollary 5.8]{Brezis}), \eqref{eq:test_mix_lap} admits a unique solution $w\in H^1_D(\Omega\setminus \oO)$. 
  Taking $v = w$ in \eqref{eq:test_lap_var} yields
  \begin{equation}
    \norm{\nabla w}_{L^2(\Omega\setminus \oO)}^2 \leq \norm{\psi-\Delta \phi}_{L^2(\Omega\setminus \oO)} \norm{w}_{L^2(\Omega\setminus \oO)}
  \end{equation}
  Moreover, due to the Poincar\'e inequality (see e.g. \cite[Corollary 4.5.3]{Ziemer89}), we have
  \begin{equation}
    \norm{w}_{H^1(\Omega\setminus \oO)}^2 \lesssim \norm{\nabla w}_{L^2(\Omega\setminus \oO)}^2\leq \norm{\psi-\Delta \phi}_{L^2(\Omega\setminus \oO)} \norm{w}_{H^1(\Omega\setminus \oO)}.
  \end{equation}
  It follows that 
  \begin{equation}
    \norm{w}_{H^1(\Omega\setminus \oO)} \lesssim \norm{\psi}_{L^2(\Omega\setminus \oO)} + \norm{\Delta \phi}_{L^2(\Omega\setminus \oO)}.
  \end{equation}
  Since $O\subset\subset \Omega$, there exists smooth domains $U_1$, $U_2$ and $U_2$ such that 
  \begin{equation}
    O\subset\subset U_1\subset\subset U_2\subset \subset \Omega.
  \end{equation}
  Then $w$ solves a local elliptic Dirichlet system in $\Omega \setminus \overline{U_1}$ and a local elliptic Neumann system in $U_2\setminus\oO$.
  Thanks to \cite[Theorem 4.18]{Mclean2000}, there holds
  \begin{align}
    &\norm{w}_{H^{s+2}(\Omega\setminus\oO)}\lesssim \norm{\psi-\Delta \phi}_{H^s(\Omega\setminus\overline{U_1})}+\norm{\psi-\Delta \phi}_{H^s(U_2\setminus \oO)} + \norm{w}_{H^1(\Omega\setminus \oO)}\\
    &\lesssim \norm{\psi}_{H^s(\Omega\setminus\oO)} + \norm{\Delta\phi}_{H^s(\Omega\setminus\oO)} \leq \norm{\psi}_{H^s(\Omega\setminus\oO)} + \norm{\phi}_{H^{s+2}(\Omega\setminus\oO)}\\
    &\lesssim \norm{\psi}_{H^{s}(\Omega\setminus\oO)} + \norm{f}_{H^{s+\frac{3}{2}}(\p \Omega)} + \norm{g}_{H^{s+\frac{1}{2}}(\p O)}.
  \end{align}
\end{proof}
To establish the solvability for \eqref{eq:mixed_lap} with distributional boundary value, we will need the following concepts.

Let $U\subset \R^n$ be a bounded domain with smooth boundary $\p U$. 
For $s\geq 0$, we define the norm
\begin{equation}
  \trinorm{u}_{-s} : = \norm{u}_{(H^{s}(U))^\ast} + \norm{u}_{H^{-s-\frac{1}{2}}(\p U)} + \norm{\p_{\bn} u}_{H^{-s-\frac{3}{2}}(\p U)}.
\end{equation}
And we denote the completion of $C^\infty(\overline{U})$ with norm $\trinorm{\cdot}_{-s}$ by $\tH^{-s}(U)$. In view of \cite[Theorem 6.1.1]{Roitberg96}, the norm $\trinorm{\cdot}_{-s}$ is equivalent to the graph norm of $\Delta$, that is 
\begin{equation}
  \label{eq:norm_equivalence}
  \trinorm{u}_{-s}\lesssim \norm{u}_{(H^{s}(U))^\ast}+\norm{\Delta u}_{(H^{s+2}(U))^\ast}\lesssim \trinorm{u}_{-s}.
\end{equation}

Next we show the existence and uniqueness for the solution to the system 
  \begin{equation}
    \label{eq:mixed_lap_distributional}
    \begin{cases}
      \Delta u = 0,\text{ in }\Omega\setminus \oO,\\
      u|_{\p\Omega} = f,\ \p_{\bn}u|_{\p O} = g.
    \end{cases}
  \end{equation}
with distributional boundary value $f$ and $g$.
\begin{lemma}
  \label{lm:well_pose_mix_lap}
  Let $s\geq 0$ and $f\in H^{-s-\frac{1}{2}}(\p\Omega)$, $g\in H^{-s-\frac{3}{2}}(\p O)$. Then \eqref{eq:mixed_lap_distributional} admits a unique solution $u\in \tH^{-s}(\Omega\setminus \oO)$.
\end{lemma}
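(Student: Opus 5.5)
I would prove Lemma \ref{lm:well_pose_mix_lap} by transposition, using Lemma \ref{lm:mixed_lap_s_positive} for the dual problem. Write $D:=\Omega\setminus\oO$; its boundary is $\p\Omega\cup\p O$, and on $\p O$ the outward normal of $D$ is $\bn$. For $\psi\in H^s(D)$ let $w=w^\psi\in H^{s+2}(D)$ be the solution of
\begin{equation}
  \Delta w=\psi \text{ in } D,\qquad w|_{\p\Omega}=0,\qquad \p_{\bn}w|_{\p O}=0,
\end{equation}
which exists by Lemma \ref{lm:mixed_lap_s_positive}, with $\norm{w}_{H^{s+2}(D)}\lesssim\norm{\psi}_{H^s(D)}$.

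For smooth $u$ with $\Delta u=0$, $u|_{\p\Omega}=f$ and $\p_{\bn}u|_{\p O}=g$, Green's formula gives
\begin{equation}
  \int_D u\,\psi\,dx=\int_{\p\Omega}\big(f\,\p_{\n}w-\p_{\n}u\,w\big)\,dS+\int_{\p O}\big(u\,\p_{\bn}w-g\,w\big)\,dS .
\end{equation}
Because $w|_{\p\Omega}=0$ and $\p_{\bn}w|_{\p O}=0$, the right-hand side reduces to $\ell(\psi):=\inner{f,\p_{\n}w}_{\p\Omega}-\inner{g,w}_{\p O}$. By the trace theorem, $\p_{\n}w\in H^{s+\frac12}(\p\Omega)$ and $w|_{\p O}\in H^{s+\frac32}(\p O)$. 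Hence $\ell$ is a bounded functional on $H^s(D)$ with
\begin{equation}
  \abs{\ell(\psi)}\lesssim\big(\norm{f}_{H^{-s-\frac12}(\p\Omega)}+\norm{g}_{H^{-s-\frac32}(\p O)}\big)\norm{\psi}_{H^s(D)}.
\end{equation}
I define $u\in(H^s(D))^\ast$ to be the element representing $\ell$. This gives existence of a candidate with the bound $\norm{u}_{(H^s)^\ast}\lesssim\norm{f}+\norm{g}$.

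Next I would show $u\in\tH^{-s}(D)$ and identify its boundary values by approximation. Choose $f_k\in C^\infty(\p\Omega)$ and $g_k\in C^\infty(\p O)$ with $f_k\to f$ in $H^{-s-\frac12}$ and $g_k\to g$ in $H^{-s-\frac32}$. By Lemma \ref{lm:mixed_lap_s_positive} with large regularity index, the corresponding solutions $u_k$ lie in $C^\infty(\overline D)$. Since the map $(f,g)\mapsto u$ is linear, the bound above gives $u_k\to u$ in $(H^s)^\ast$. The sequence $\Delta u_k=0$ converges trivially, so by the norm equivalence \eqref{eq:norm_equivalence} the sequence $u_k$ is Cauchy in $\trinorm{\cdot}_{-s}$. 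Its limit in $\tH^{-s}(D)$ is $u$, it satisfies $\Delta u=0$, and its traces are the limits of the traces of $u_k$, namely $u|_{\p\Omega}=\lim f_k=f$ and $\p_{\bn}u|_{\p O}=\lim g_k=g$. The traces and $\Delta$ are continuous on $\tH^{-s}$ by the definition of the norm together with \eqref{eq:norm_equivalence}.

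For uniqueness, let $u\in\tH^{-s}(D)$ solve the problem with $f=0$ and $g=0$. The Green identity above extends by density to pairs $u\in\tH^{-s}$, $w\in H^{s+2}$: every term is continuous in $\trinorm{\cdot}_{-s}$ against the corresponding trace norms of $w$, and the traces appearing are exactly the ones controlled by $\trinorm{\cdot}_{-s}$. Therefore $\inner{u,\psi}=0$ for every $\psi\in H^s(D)$, so $u=0$.

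The step I expect to be the main obstacle is this justification: that the Green formula holds on $\tH^{-s}$ and that the trace maps are well defined and continuous there. In particular one must check that the trace of $u$ on $\p O$ and the normal derivative on $\p\Omega$, which are unknown data, fall in the right dual spaces, $H^{-s-\frac12}(\p O)$ and $H^{-s-\frac32}(\p\Omega)$. I would handle this through the component norms defining $\tH^{-s}$ and \eqref{eq:norm_equivalence}, invoking the Roitberg theory \cite{Roitberg96}. Because $\p\Omega$ and $\p O$ are disjoint, the pure Dirichlet and pure Neumann parts can be treated separately by cutoffs.
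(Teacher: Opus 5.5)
Your proof is correct. Both arguments start from the same transposition functional (your $\ell$ is the paper's $L$), but you establish existence by a different route.

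\textbf{The paper's route.} It takes the representing element $u\in(H^s(\Omega\setminus\oO))^\ast$ and checks, by testing with $\psi\in\mathcal{D}(\Omega\setminus\oO)$, that $u$ is harmonic in the interior. It then invokes Roitberg's theorem \cite{Roitberg96} to conclude that $u$ lies in $\tH^{-s}(\Omega\setminus\oO)$. Only afterwards does it identify $u|_{\p\Omega}=f$ and $\p_{\bn}u|_{\p O}=g$, by pairing with auxiliary functions $v_\xi,v_\zeta\in H^{s+2}$ with prescribed Cauchy data and applying Lemma \ref{lm:Gauss_Green_lap}.

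\textbf{Your route.} You approximate the data by smooth $f_k,g_k$ and get smooth solutions $u_k$ from Lemma \ref{lm:mixed_lap_s_positive}. These coincide with the transposition solutions by the classical Green formula; say this explicitly. Then $\Delta u_k=0$ together with \eqref{eq:norm_equivalence} upgrades convergence in $(H^s)^\ast$ to convergence in $\trinorm{\cdot}_{-s}$. The equation, read in $(H^{s+2})^\ast$, and both boundary conditions pass to the limit by continuity.

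\textbf{Comparison.} Your route avoids both the regularity theorem for weak solutions and the separate trace-identification step. It also gives the stability bound $\trinorm{u}_{-s}\lesssim\norm{f}_{H^{-s-\frac12}(\p\Omega)}+\norm{g}_{H^{-s-\frac32}(\p O)}$ for free. For smooth harmonic $u_k$, the only inequality you use, $\trinorm{u_k}_{-s}\lesssim\norm{u_k}_{(H^s)^\ast}$, follows directly from Green's formula with test functions of prescribed Cauchy data. So your argument is more self-contained. The paper's route works with $L$ directly and needs neither density of smooth data nor smooth solvability at all orders, at the price of citing the deeper Roitberg result.

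\textbf{Uniqueness.} Your argument applies Lemma \ref{lm:Gauss_Green_lap} with the dual solutions $w^\psi$ to get $\inner{u,\psi}=0$ for all $\psi\in H^s$, and then $\trinorm{u}_{-s}=0$ by \eqref{eq:norm_equivalence}. This is the right argument. It supplies what the paper dismisses as following ``immediately'' from Lemma \ref{lm:mixed_lap_s_positive}, which by itself only covers $H^{s+2}$ solutions. Note that it genuinely needs $\Delta u=0$ to hold in $(H^{s+2})^\ast$, not merely in $\mathcal{D}'$. Otherwise, elements of $\tH^{-s}$ with zero interior part and a nonzero trace supported on $\p O$ would be nontrivial solutions of the homogeneous problem.
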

\begin{proof}
  Let $\phi\in H^s(\Omega\setminus\oO)$, we consider the equation
  \begin{equation}
    \label{eq:test_mix_lap}
    \begin{cases}
      \Delta w = \phi,\text{ in }\Omega \setminus \overline{O},\\
      w|_{\p\Omega} = 0,\ \p_{\bn} w|_{\p O} = 0.
    \end{cases}
  \end{equation}
  According to Lemma \ref{lm:mixed_lap_s_positive}, we have $w\in H^{s+2}(\Omega\setminus\oO)$. Moreover, by virtue of the trace theorem, there holds
  \begin{equation}
    \label{eq:trace_w}
    \norm{w}_{H^{s+\frac{3}{2}}(\p O)}+\norm{\p_{\n} w}_{H^{s+\frac{1}{2}}(\p\Omega)}\lesssim \norm{w}_{H^{s+2}(\Omega\setminus \oO)}\lesssim \norm{\phi}_{H^s(\Omega\setminus\oO)}.
  \end{equation}
  Then we define a linear functional $L$ on $H^{s}(\Omega\setminus\oO)$ as 
  \begin{equation}
    L(\phi) : = \inner{f,\p_{\n} w}_{H^{-s-\frac{1}{2}}(\p\Omega), H^{s+\frac{1}{2}}(\p\Omega)} - \inner{g,w}_{H^{-s-\frac{3}{2}}(\p O), H^{s+\frac{3}{2}}(\p O)},
  \end{equation}
  where $w$ solves \eqref{eq:test_mix_lap} associated with $\phi$. Since the solution for \eqref{eq:test_mix_lap} is unique, the functional $L$ is well-defined. Moreover, in view of \eqref{eq:trace_w}, there holds
  \begin{align}
    L(\phi) &\leq \norm{f}_{H^{-s-\frac{1}{2}}(\p\Omega)} \norm{\p_{\n} w}_{H^{s+\frac{1}{2}}(\p\Omega)} + \norm{g}_{H^{-s-\frac{3}{2}}(\p O)} \norm{w}_{H^{s+\frac{3}{2}}(\p O)}\\
    &\lesssim (\norm{f}_{H^{-s-\frac{1}{2}}(\p\Omega)}+\norm{g}_{H^{-s-\frac{3}{2}}(\p O)}) \norm{\phi}_{H^s(\Omega\setminus\oO)}.
  \end{align}
  Thus $L$ is also continuous, and we can write 
  \begin{equation}
    \label{eq:L_to_u}
    L(\phi) = \inner{u,\phi}_{(H^{s}(\Omega\setminus\oO))^\ast, H^s(\Omega\setminus \oO)}
  \end{equation}
  with $u \in (H^{s}(\Omega\setminus\oO))^\ast$. Next we verify that $u$ solves \eqref{eq:mixed_lap}. Notice that for any test function $\psi\in \mathcal{D}(\Omega\setminus \oO)$, we have 
  \begin{align}
    &\inner{\Delta u, \psi}_{\mathcal{D}(\Omega\setminus \oO)^\ast , \mathcal{D}(\Omega\setminus \oO)} = \inner{u,\Delta \psi}_{\mathcal{D}(\Omega\setminus \oO)^\ast , \mathcal{D}(\Omega\setminus \oO)} = L(\Delta\psi)\\
    & = \inner{f,\p_{\bn} \psi}_{(C^\infty_0(\p\Omega))^\ast , C^\infty_0(\p\Omega)} - \inner{g,\psi}_{(C^\infty_0(\p O))^\ast , C^\infty_0(\p O)} = 0.
  \end{align}
  Hence $\Delta u = 0$ in $\Omega\setminus \oO$ in the weak sense (see \cite[Definition 6.1.1]{Roitberg96}). Thanks to \cite[Theorem 6.1.1]{Roitberg96}, we have $u\in \tH^{-s}(\Omega\setminus \oO)$ and thus $u|_{\p \Omega}\in H^{-s-\frac{1}{2}}(\p\Omega)$, $\p_{\bn} u|_{\p O}\in H^{-s-\frac{3}{2}}(\p O)$. By virtue of the trace theorem (see e.g. \cite[Chapter 1, Theorem 9.4]{Lions}), for any $\xi\in H^{s+\frac{3}{2}}(\p O)$ and $\zeta\in H^{s+\frac{1}{2}}(\p\Omega)$, there exists $v_\xi, v_\zeta\in H^{s+2}(\Omega\setminus \oO)$ such that 
  \begin{equation}
    v_\xi|_{\p\Omega} = 0,\ v_\xi|_{\p O} = \xi,\ \p_{\n} v_\xi|_{\p\Omega} = 0,\ \p_{\bn} v_\xi|_{\p O} = 0,
  \end{equation}
  and 
  \begin{equation}
    v_\zeta|_{\p\Omega} = 0,\ v_\zeta|_{\p O} = 0,\ \p_{\n} v_\zeta|_{\p\Omega} = \zeta, \ \p_{\bn} v_\zeta|_{\p O} = 0.
  \end{equation}
  Using Lemma \ref{lm:Gauss_Green_lap} and \eqref{eq:L_to_u}, we have 
  \begin{align}
  \inner{g , \xi}_{H^{-s-\frac{3}{2}}(\p O), H^{s+\frac{3}{2}}(\p O)} &= - \inner{u,\Delta v_\xi}_{H^{-s}(\Omega\setminus \oO), H^s(\Omega\setminus \oO)}\\
  & =  \inner{\p_{\bn} u , v_\xi}_{H^{-s-\frac{3}{2}}(\p O), H^{s+\frac{3}{2}}(\p O)},
  \end{align}
  and 
  \begin{align}
    \inner{f , \zeta}_{H^{-s-\frac{1}{2}}(\p \Omega), H^{s+\frac{1}{2}}(\p \Omega)} &= - \inner{u,\Delta v_\zeta}_{H^{-s}(\Omega\setminus \oO), H^s(\Omega\setminus \oO)}\\
    & =  \inner{u , \p_{\n} v_\zeta}_{H^{-s-\frac{1}{2}}(\p \Omega), H^{s+\frac{1}{2}}(\p \Omega)}.
  \end{align}
  Then we can conclude that $u|_{\p\Omega} = f$ in $H^{-s-\frac{1}{2}}(\p \Omega)$ and $\p_{\bn} u|_{\p O} = g$ in $H^{-s-\frac{3}{2}}(\p O)$ as $\xi\in H^{s+\frac{3}{2}}(\p O)$ and $\zeta\in H^{s+\frac{1}{2}}(\p\Omega)$ are arbitrary. 

  Finally, since \eqref{eq:mixed_lap} is a linear equation, the uniqueness for its solution follows immediately from Lemma \ref{lm:mixed_lap_s_positive}.
\end{proof}
Next we consider the following mixed boundary value problem.
\begin{equation}
  \label{eq:mixed_lap_0}
  \begin{cases}
    \Delta v = 0,\text{ in }\Omega \setminus \oO,\\
    v|_{\p\Omega} = f,\ \p_{\bn} v|_{\p O} = 0.
  \end{cases}
\end{equation}

Let us define the space
\begin{equation}
  \label{eq:def_B_lap}
  \iE : = \{v^f|_{\p O} \mid f\in C^\infty_0(\cS), v^{f} \text{ solves \eqref{eq:mixed_lap_0}}\}.
\end{equation}
Now we are ready to show the following high order approximate controllability from $\p\Omega$ to $\p O$.
\begin{proposition}
  \label{prop:approxi_control_lap}
  The space $\iE$ defined in \eqref{eq:def_B_lap} is dense in $H^{s+\frac{3}{2}}(\p O)$ with respect to $H^{s+\frac{3}{2}}(\p O)$ topology for all $s\geq 0$. 
\end{proposition}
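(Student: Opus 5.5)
We argue by duality (Hahn--Banach), with unique continuation for the adjoint problem as the main input. First, $\iE\subset H^{s+\frac{3}{2}}(\p O)$ for every $s\ge 0$: for $f\in C^\infty_0(\cS)$, Lemma \ref{lm:mixed_lap_s_positive} with $\psi=0$, $g=0$ gives $v^f\in H^{t}(\Omega\setminus\oO)$ for all $t$, so $v^f|_{\p O}$ is smooth. If $\iE$ were not dense, Hahn--Banach would give a nonzero $\xi\in H^{-s-\frac{3}{2}}(\p O)=(H^{s+\frac{3}{2}}(\p O))^\ast$ with
\begin{equation}
  \inner{\xi, v^f|_{\p O}}_{H^{-s-\frac{3}{2}}(\p O),H^{s+\frac{3}{2}}(\p O)}=0\quad\text{for all } f\in C^\infty_0(\cS).
\end{equation}
We aim to show that this forces $\xi=0$.

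\textbf{Dual problem.} By Lemma \ref{lm:well_pose_mix_lap}, let $w\in\tH^{-s}(\Omega\setminus\oO)$ be the unique solution of
\begin{equation}
  \Delta w=0 \text{ in }\Omega\setminus\oO,\quad w|_{\p\Omega}=0,\quad \p_{\bn}w|_{\p O}=\xi.
\end{equation}
Apply Green's formula (the generalized Gauss--Green identity, Lemma \ref{lm:Gauss_Green_lap}, pairing the distributional solution $w$ with the smooth $v^f$) to $w$ and $v^f$. Both are harmonic, $v^f$ has zero Neumann data on $\p O$, and $w$ has zero Dirichlet data on $\p\Omega$, so only two boundary terms remain:
\begin{equation}
  0=\inner{\p_{\bn}w, v^f}_{\p O}-\inner{\p_{\n} w, f}_{\p\Omega},
\end{equation}
up to the sign convention for the normals. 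Hence $\inner{\p_{\n}w,f}_{\p\Omega}=\inner{\xi,v^f|_{\p O}}=0$ for all $f\in C^\infty_0(\cS)$, that is, $\p_{\n}w=0$ on $\cS$ in the distributional sense.

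\textbf{Unique continuation.} Now $w$ is harmonic in $\Omega\setminus\oO$ and has vanishing Cauchy data $w=\p_{\n}w=0$ on the open set $\cS\subset\p\Omega$. Since $\p\Omega$ is smooth and $O\subset\subset\Omega$, $w$ is smooth up to $\p\Omega$ by interior-type boundary regularity away from $\p O$ (the distributional boundary data are zero there). Extend $w$ by zero across $\cS$ into a small exterior neighborhood. The extension is harmonic near $\cS$ because the Cauchy data vanish, and it vanishes on an open set. Unique continuation for harmonic functions, together with connectedness of $\Omega\setminus\oO$, then gives $w\equiv0$. Consequently $\xi=\p_{\bn}w|_{\p O}=0$ in $H^{-s-\frac{3}{2}}(\p O)$, which is the required contradiction.

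\textbf{Main obstacle.} The delicate step is the rigorous justification of the Green identity between the very weak solution $w\in\tH^{-s}$ and $v^f$, including the identification of $\p_{\n}w$ as a well-defined distribution on $\p\Omega$. I would handle it in one of two ways. The first is to use the density of $C^\infty(\overline{\Omega\setminus\oO})$ in $\tH^{-s}$ together with the continuity of the traces in the $\trinorm{\cdot}_{-s}$ norm, in the spirit of the proof of Lemma \ref{lm:well_pose_mix_lap}. The second is to localize: $w$ is smooth near $\p\Omega$ because its Dirichlet data there vanish, so only the pairing on $\p O$ needs the distributional duality.
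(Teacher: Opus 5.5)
Your proposal is correct and follows essentially the same route as the paper: Hahn--Banach, the dual mixed problem solved in $\tH^{-s}$ via Lemma \ref{lm:well_pose_mix_lap}, the Green identity of Lemma \ref{lm:Gauss_Green_lap} giving $\p_{\n} w=0$ on $\cS$, smoothness of $w$ up to $\p\Omega$, and unique continuation across the connected set $\Omega\setminus\oO$. The differences are cosmetic. You carry out the Cauchy-data unique continuation by zero extension across $\cS$ instead of citing it. Also, with the paper's outward normals the identity reads $0=\langle \p_{\n}w,f\rangle_{\p\Omega}+\langle \xi,v^f\rangle_{\p O}$, but the sign does not affect the conclusion.
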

\begin{proof}
  We argue by contradiction. Suppose that $\iE$ is not dense in $H^{s+\frac{3}{2}}(\p O)$. According to the Hahn-Banach theorem (see e.g. \cite[Corollary 1.8]{Brezis}), there exists a non-zero $h\in H^{-k-\frac{3}{2}}(\p O)$ such that for all $g\in \iE$, there holds
  \begin{equation}
    \inner{h,g}_{H^{-s-\frac{3}{2}}(\p O), H^{s+\frac{3}{2}}(\p O)}=0.
  \end{equation}
  Consider the equation
  \begin{equation}
    \begin{cases}
      \Delta w = 0,\text{ in }\Omega\setminus \oO,\\
      w|_{\p\Omega} = 0,\ \p_{\bn} w|_{\p O} = h.
    \end{cases}
  \end{equation}
  According to Lemma \ref{lm:well_pose_mix_lap}, we have $w\in \tH^{-s}(\Omega\setminus \oO)$. Moreover, by virtue of Lemma \ref{lm:Gauss_Green_lap}, there holds
  \begin{align}
    0=&\inner{\Delta w,v^f}_{(H^{s+2}(\Omega\setminus\oO))^\ast, H^{s+2}(\Omega\setminus\oO)}  - \inner{w,\Delta v^f}_{(H^{s}(\Omega\setminus\oO))^\ast, H^s(\Omega\setminus\oO)} \\
    &=\inner{\p_{\n} w, v^f}_{H^{-s-\frac{3}{2}}(\p \Omega), H^{s+\frac{3}{2}}(\p \Omega)} +\inner{\p_{\bn} w, v^f}_{H^{-s-\frac{3}{2}}(\p O), H^{s+\frac{3}{2}}(\p O)}\\
    & - \inner{w,\p_{\n} v^f}_{H^{-s-\frac{1}{2}}(\p \Omega), H^{s+\frac{1}{2}}(\p \Omega)} - \inner{w,\p_{\bn} v^f}_{H^{-s-\frac{1}{2}}(\p O), H^{s+\frac{1}{2}}(\p O)}\\
    & =\inner{\p_{\n} w, f}_{H^{-k-\frac{1}{2}}(\p\Omega), H^{k+\frac{1}{2}}(\p\Omega)}.
  \end{align}
  for all $f\in C^\infty_0(\cS)$. Thus $\p_{\n} w|_{\cS} = 0$ in the distribution sense. Since $O\subset \subset \Omega$, we can find an open set $U$ with smooth boundary $\p U$ such that $O\subset \subset U\subset\subset \Omega$. Thanks to the interior regularity for elliptic equations (see e.g. \cite[Theorem 17.1.3, Lemma 17.1.5]{Hormander3}), we have $w$ is smooth in a neighborhood of $\p U$. Then according to the regularity for elliptic equation with smooth Dirichlet boundary value in $\Omega\setminus \overline{U}$, we can obtain $w\in C^\infty(\overline{\Omega}\setminus U)$. Notice that $w|_\cS = 0 $ and $\p_{\n} w|_\cS = 0$, then we can conclude that $w = 0$ in $\Omega\setminus \overline{U}$ by unique continuation. Therefore, by virtue of unique continuation from an open set (see e.g. \cite[Theorem 17.2.6]{Hormander3}), it follows that $w = 0$ in $\Omega\setminus \overline{O}$, and hence $h = \p_{\bn} w|_{\p O} = 0$. This leads to a contradiction.
\end{proof}

\section{Proof of Theorem \ref{main-Laplace}}
\label{sec:lap_main}
To prove Theorem \ref{main-Laplace}, we will need the following lemma.
\begin{lemma}
  \label{lm:lap_variational}
  Let $U$ be open and a set of finite perimeter and $\nu$ be the outward measure-theoretic unit normal vector to $U$. Let $\psi \in L^\infty (\p^\ast U)$.
  Suppose $u\in C^1(\overline{U})\cap H^2(U)$ satisfying $\Delta u = 0$ in $U$ and 
  \begin{equation}
    u\geq \psi,\ \p_{\nu} u \geq 0,\ (u-\psi) \p_{\nu} u = 0,\text{ a.e. on }\p^\ast U.
  \end{equation}
  Then $u$ is a constant function in $U$.
\end{lemma}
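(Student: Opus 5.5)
The approach is an energy identity: integrate $\abs{\nabla u}^2$ by parts and use the Signorini complementarity to show that this energy is nonpositive, then invoke connectedness. I read $U$ as connected, as in the intended applications (e.g.\ $U=\Omega\setminus\oO$ or a connected component of a difference of such sets). Since $u\in H^2(U)\cap C^1(\overline{U})$, the vector field $u\nabla u$ belongs to $W^{1,1}(U)$ and is continuous up to the boundary. The Gauss--Green theorem for sets of finite perimeter (De Giorgi--Federer) then gives
\begin{equation*}
\int_U \abs{\nabla u}^2\,dx = \int_U \bigl(\operatorname{div}(u\nabla u) - u\Delta u\bigr)\,dx = \int_{\p^\ast U} u\,\p_\nu u\, d\mathcal{H}^{n-1},
\end{equation*}
where $\Delta u=0$ has been used.

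The boundary term must next be controlled by the Signorini conditions. Plain $u$ in the boundary integral has no sign. On the set where $\p_\nu u>0$, however, complementarity gives $u=\psi$. Hence $u\,\p_\nu u=\psi\,\p_\nu u$ a.e. on $\p^\ast U$, and the identity becomes
\begin{equation*}
\int_U\abs{\nabla u}^2 = \int_{\p^\ast U}\psi\,\p_\nu u .
\end{equation*}
Applying Gauss--Green to $\nabla u$ alone gives $\int_{\p^\ast U}\p_\nu u=\int_U\Delta u=0$. Combined with $\p_\nu u\ge 0$, this forces $\p_\nu u=0$ $\mathcal{H}^{n-1}$-a.e. on $\p^\ast U$. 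Since $\psi\in L^\infty$, the remaining integral vanishes, so $\nabla u=0$ in $U$, and $u$ is constant on each connected component. This argument in fact bypasses the sign of $u-\psi$ entirely, because the flux argument alone kills $\p_\nu u$.

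The main obstacle is justifying Gauss--Green on a merely finite-perimeter open set with fields that are only $C^1(\overline U)$. I would handle this by using the version for Lipschitz, or $W^{1,1}\cap C^0(\overline U)$, vector fields on bounded sets of finite perimeter. I also need to check that $\mathcal{H}^{n-1}(\p U\setminus\p^\ast U)$ causes no trouble. This is the case because the formula only involves the reduced boundary, the traces of continuous fields are their pointwise values, and $u\,\p_\nu u$ is bounded there, so both integrals are well defined. If $U$ is unbounded or the field is not compactly supported, I would first restrict to bounded $U$, which is the case in the applications where $U\subset\Omega$.
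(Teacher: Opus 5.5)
Your argument is the paper's energy argument, slightly streamlined. The paper tests with $M-u$, $M>\sup\psi$, and splits $\p^*U$ into contact and non-contact sets. Its $M$-term, however, is exactly the zero net flux $M\int_{\p^*U}\p_\nu u=0$. So your observation that zero flux together with $\p_\nu u\ge 0$ already forces $\p_\nu u=0$ a.e.\ is the same computation made explicit, and it shows that $u\ge\psi$ and complementarity are never used. You are also right that connectedness of $U$ must be added; the paper uses it silently.

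\textbf{The gap.} It lies in the step you yourself single out, the Gauss--Green formula. The Lipschitz version does not apply, since $u\nabla u$ need not be Lipschitz on $\overline U$. The $W^{1,1}(U)\cap C^0(\overline U)$ version you invoke is false on general bounded open sets of finite perimeter. A continuous field can carry flux through the part of $\p U$ where $U$ has density one, and the fact that both integrals are well defined says nothing about this.

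In fact the lemma itself fails in this generality. Set up the following:
\begin{itemize}
\item $R=(0,1)^2$ and $C\subset[\frac13,\frac23]$ a Cantor set of zero length with Cantor measure $\mu_C$;
\item $K\in C^{1,\alpha}(\R)$ with $K''=\mu_C$, so $K$ is affine on each gap of $C$;
\item $h\in C^\infty_c((\frac14,\frac34))$ with $\int h=0$, $h\not\equiv0$;
\item $\eta\in C^\infty_c((0,1))$ with $\eta=1$ near $[\frac13,\frac23]$.
\end{itemize}
Put $u=\eta(x)K(x)h(y)+w$. Here $g:=\Delta(\eta Kh)-\mu_C\otimes h\,dy$ is H\"older continuous, compactly supported in $R$ and of mean zero. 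Let $w\in C^{2,\alpha}(\overline R)$ solve $\Delta w=-g$ with $\p_\nu w|_{\p R}=0$. Then $\Delta u=\mu_C\otimes h\,dy$ in $\mathcal{D}'(R)$ and $\p_\nu u=0$ on $\p R$.

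Now take the connected open set $U=R\setminus(C\times[\frac14,\frac34])$.
\begin{itemize}
\item $\abs{R\setminus U}=0$, hence $\p^*U=\p^*R$ with the same normal.
\item $u$ is harmonic in $U$ and lies in $C^1(\overline U)$.
\item $u\in H^2(U)$, because $D^2(\eta Kh)$ is bounded on $U$: $K$ is affine on the gaps of $C$, and $h$ vanishes near every point of $U$ with $x\in C$.
\end{itemize}
Taking $\psi\equiv\min_{\overline U}u-1$, every hypothesis holds. Yet $u$ is not constant on $U$: otherwise, $U$ being dense in $R$, it would be constant on $R$, contradicting $\Delta u\neq 0$. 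In particular your identity $\int_U\abs{\nabla u}^2=\int_{\p^*U}u\,\p_\nu u$ fails, since the right side vanishes. The missing flux sits on $C\times[\frac14,\frac34]$, which has infinite $\mathcal{H}^1$-measure.

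\textbf{The repair.} An extra hypothesis on $U$ is unavoidable. The natural one is $\mathcal{H}^{n-1}(\p U)<\infty$, which holds for $\cV$ in the application because $\p\cV\subset\p O_1\cup\p O_2$. Under it, $\int_U\operatorname{div}F=\int_{\p^*U}F\cdot\nu\,d\mathcal{H}^{n-1}$ holds for every $F\in C(\overline U;\R^n)\cap C^1(U)$ with $\operatorname{div}F\in L^1(U)$. Sketch:
\begin{itemize}
\item Cover the compact set $\p U$ by finitely many balls $B(x_i,r_i)$ with $r_i<\varepsilon$ and $\sum_i r_i^{n-1}\le C$, with $C$ independent of $\varepsilon$.
\item Let $\theta_\varepsilon$ be the maximum of Lipschitz cutoffs equal to $1$ on $B(x_i,r_i)$ and supported in $B(x_i,2r_i)$. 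Then $\int\abs{\nabla\theta_\varepsilon}\le C'$ uniformly, and $\theta_\varepsilon=1$ near $\p U\supset\p^*U$.
\item Since $(1-\theta_\varepsilon)F$ has compact support in $U$, we get $\int_U(1-\theta_\varepsilon)\operatorname{div}F=\int_U F\cdot\nabla\theta_\varepsilon$.
\item Compare the right-hand side with the same quantity for a smooth uniform approximation $F_\delta$ of $F$ on $\R^n$. For $\theta_\varepsilon F_\delta$ De Giorgi--Federer applies, and the error is at most $C'\sup_{\overline U}\abs{F-F_\delta}$.
\item Let $\varepsilon\to0$, then $\delta\to0$.
\end{itemize}
Applied to $F=\nabla u$ and $F=u\nabla u$, this completes your argument, without even using $u\in H^2(U)$.

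The paper's own justification has the same weak point. An extension of $u$ to a neighbourhood of $\overline U$ in $C^1\cap H^2$ is not available for a general set of finite perimeter. In the example above none exists, since $\Delta u$ is a singular measure.
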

\begin{proof}
  We decompose $\p^\ast U = \Gamma_1\cup \Gamma_2$, where $\Gamma_1$ and $\Gamma_2$ are measurable sets defined as
  \begin{equation}
    \Gamma_1 : = \{x\in \p^\ast U \mid u(x) = \psi(x)\},\ \Gamma_2 : = \{x\in \p^\ast U \mid u(x) \neq \psi(x)\}.
  \end{equation}
  Since $(u-\psi) \p_{\bn} u = 0$ a.e. on $\p^\ast U$, we have $\p_{\bn} u = 0$ a.e. on $\Gamma_2$. Let $M$ be a constant function in $U$ such that $M>\sup \psi$. Then we notice that $(M-\psi)\p_{\bn} u\geq 0$ a.e. on $\Gamma_1$.
  As $u\in C^1(\overline{U})\cap H^2(U)$, by the Sobolev extension (see e.g. \cite[Section 5.4]{evans}), we can extend it to a neighborhood of $U$ in the same regularity class.
  Using the Gauss-Green formula in \cite[Proposition 6.4]{CCT19}, there holds
  \begin{align}
    - \int_U \abs{\nabla u}^2 &= \int_U \nabla u\cdot \nabla(M-u) = \int_{\p^\ast U} (M-u) \p_{\bn} u \ d\cH^{n-1} - \int_U (M-u) \Delta u\\
    &= \int_{\Gamma_1} (M-\psi)\p_{\bn} u \ d\cH^{n-1}\geq 0.
  \end{align}
  Therefore, we have $\nabla u = 0$ a.e. in $U$. It follows immediately that $u$ is a constant function in $U$ since $u\in C^1(\overline{U})$. 
\end{proof}
Above lemma shows that when the Signorini condition is prescribed over the entire boundary, the Laplace equation admits only constant solutions. This rigidity arises because the unilateral constraint forces the net energy flux across the boundary to vanish. In the following theorem, we leverage this property to prove that the shape of the obstacle is uniquely determined by the Dirichlet-to-Neumann map $\Lambda$.
\begin{theorem}
  \it Let $\Omega\subset \mathbb{R}^n$ be a bounded connected open set with smooth boundary. Let $O_1,O_2\subset\subset \Omega$ be (possibly empty) open subsets with smooth boundary, and $\varphi_1\in C^\infty(\p O_1)$, $\varphi_2\in C^\infty(\p O_2)$.
  Assume that $\Omega\setminus \overline{O_1},\Omega\setminus \overline{O_2}$ are connected. 
  Let $\Lambda_1$, $\Lambda_2$ be the Dirichlet-to-Neumann map defined in \eqref{eq:def_DN} with obstacles $O_1,\varphi_1$ and $O_2,\varphi_2$, respectively.
  If $\Lambda_1=\Lambda_2$, then $O_1=O_2$.
\end{theorem}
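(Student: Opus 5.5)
The plan is to compare the two Signorini solutions on the common exterior region and, for a suitably chosen Dirichlet datum, to trap a harmonic function satisfying Signorini-type conditions on a region lying outside $O_1$ but cut off from $\p\Omega$ by $O_2$. Lemma \ref{lm:lap_variational} then forces that function to be constant, which will contradict the choice of datum.

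\textbf{Step 1: choice of datum.} Argue by contradiction and assume $O_1\neq O_2$. Let $G$ be the connected component of $\Omega\setminus(\overline{O_1}\cup\overline{O_2})$ whose boundary contains $\p\Omega$. Since $O_1\neq O_2$, after possibly swapping indices the open set $D:=(\Omega\setminus\overline{O_1})\setminus\overline{G}$ is nonempty, and $\p D\subset \p O_1\cup(\p G\cap\p O_2)$.

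Fix a constant $c>\max(\sup\varphi_2,0)+1$. The constant function $c$ lies in $H^{s+\frac32}(\p O_2)$, so by Proposition \ref{prop:approxi_control_lap} (applied with $O=O_2$), and by the Sobolev embedding for $s$ large, there is $f\in C^\infty_0(\cS)$ such that the solution $v^f$ of \eqref{eq:mixed_lap_0} for $O_2$ satisfies $\sup_{\p O_2}\abs{v^f-c}<\tfrac12$. Then $v^f>\varphi_2$ on $\p O_2$ and $\p_{\bn}v^f=0$ there, so $v^f$ satisfies \eqref{eq:direct_problem_lap} for $(O_2,\varphi_2)$. By uniqueness of the Signorini solution, $u_2^f=v^f$.

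\textbf{Step 2: unique continuation into $G$.} The solutions $u_1^f$ and $u_2^f$ have the same Dirichlet data on $\p\Omega$, and by $\Lambda_1=\Lambda_2$ the same Neumann data on $\cR$. Both are harmonic in $G$, so unique continuation gives $u_1^f=u_2^f$ in $G$. By the $C^{1,\frac12}$ regularity up to the obstacle, this identity extends with first derivatives to $\p G\cap\p O_2$. Consequently, on $\p D\cap\p O_2$ we get $\p_{\nu_D}u_1^f=\pm\p_{\bn}u_2^f=0$, so the orientation of the normal is irrelevant.

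\textbf{Step 3: the rigidity lemma on $D$.} Apply Lemma \ref{lm:lap_variational} to $u=u_1^f$ on $U=D$, with $\psi=\varphi_1$ on $\p D\cap\p O_1$ and $\psi=\min u_1^f-1$ on the remaining part of $\p D$.
\begin{itemize}
\item On the part lying on $\p O_1$, the outward normal of $D$ coincides with $\bn$ for $O_1$, and the Signorini conditions hold there.
\item On the part lying on $\p O_2$, we have $u\ge\psi$ and $\p_\nu u=0$, so all three conditions hold trivially.
\end{itemize}
The lemma gives that $u_1^f$ is constant on (each component of) $D$. Since $\Omega\setminus\overline{O_1}$ is connected, unique continuation makes $u_1^f$ constant on all of $\Omega\setminus\overline{O_1}$. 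Hence $u_2^f=u_1^f$ is constant on $G$. This constant equals $f$ on $\p\Omega$, hence is $0$ because $f$ vanishes off $\cS$. By unique continuation in the connected set $\Omega\setminus\overline{O_2}$, $u_2^f\equiv0$, which contradicts $u_2^f|_{\p O_2}>c-\tfrac12>0$.

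\textbf{Expected main obstacle.} The hardest step is verifying the hypotheses of Lemma \ref{lm:lap_variational} on $D$. The set $D$ may have nonsmooth boundary where $\p O_1$ meets $\p O_2$, so we need:
\begin{itemize}
\item $D$ has finite perimeter, since $\p D\subset\p O_1\cup\p O_2$ with both boundaries smooth and compact;
\item $u_1^f\in C^1(\overline D)\cap H^2(D)$, which follows from the cited $H^2$ and $C^{1,\frac12}$ regularity of Signorini solutions;
\item $\mathcal{H}^{n-1}$-a.e. on $\p^\ast D$ the measure-theoretic normal agrees with $\pm\bn$ of $O_1$ or $O_2$, with the correct orientation on $\p O_1$.
\end{itemize}
For the last point, wherever the two smooth surfaces are tangent the point belongs to both parts of the boundary. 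There the Neumann derivative vanishes, so the conditions still hold a.e.
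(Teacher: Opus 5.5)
Your argument is the paper's proof with the indices $1$ and $2$ interchanged. The paper applies Lemma \ref{lm:lap_variational} to $u_2^g$ on a component $\cV$ of $(\Omega\setminus\overline{G})\setminus\overline{O_2}$, after making $u_1^g$ Neumann-free on $\p O_1$. You apply it to $u_1^f$ on $D$, after making $u_2^f$ Neumann-free on $\p O_2$. Your geometric and regularity checks (finite perimeter, orientation of the normal on $\p O_1$, vanishing normal derivative on the $G$-side part) are fine.

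\textbf{The gap.} It is in the final contradiction. You argue that the constant value of $u_1^f$ equals $f$ on $\p\Omega$, ``hence is $0$ because $f$ vanishes off $\cS$''. This requires $\cS\neq\p\Omega$. The theorem allows any nonempty open $\cS\subset\p\Omega$, including $\cS=\p\Omega$, which is the setting of Section \ref{sec:examples}. In that case $C^\infty_0(\cS)$ contains all constants, and Step 1 can perfectly well return $f\equiv c$, since $v^c\equiv c$ reaches the target $c$ exactly. For such $f$, your conclusion that $u_1^f=u_2^f$ is constant and equal to $f$ is simply true and contradicts nothing. Your requirement $c>0$ only rules out $f\equiv 0$.

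\textbf{The fix.} The missing ingredient is to make the datum non-constant. This is exactly why the paper constructs $g=h+\varepsilon\eta$.
\begin{itemize}
\item If the $f$ from Step 1 is non-constant, keep it.
\item If it is constant, fix a non-constant $\eta\in C^\infty_0(\cS)$. Choose $\varepsilon>0$ so small, by Lemma \ref{lm:mixed_lap_s_positive} and Sobolev embedding, that $\sup_{\p O_2}|v^{\varepsilon\eta}|<\tfrac12$. Replace $f$ by the non-constant datum $f+\varepsilon\eta$.
\end{itemize}
Then $v^{f+\varepsilon\eta}>c-1>\sup\varphi_2$ on $\p O_2$, so it is still the Signorini solution for $(O_2,\varphi_2)$. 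Your Steps 2--3 then show that the datum is constant on $\p\Omega$, which is the contradiction.

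Alternatively, approximate a target $\rho$ on $\p O_2$ with $\min\rho>\sup\varphi_2+1$ and $\max\rho-\min\rho>1$. Then $v^f|_{\p O_2}$ cannot be constant, and hence neither can $f$. With either modification your proof goes through.
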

\begin{proof}
  To get a contradiction, we suppose $O_1\neq O_2$. Then without loss of generality, we assume that $O_1\not\subset O_2$, and we define the following sets analogous to \cite[Section 2]{HLLOZ25}:
\begin{align} 
  G:= \textrm{the connected component of } \Omega\setminus (\overline{O_1\cup O_2}) \textrm{ such that } \p \Omega\subset \p G, \label{def-G0} \\
  \cV:= \textrm{a connected component of } (\Omega\setminus \overline{G})\setminus \overline{O_2} \textrm{ satisfying } \p\cV \cap \p G\neq \emptyset. \label{def-Vset}
\end{align}
\begin{figure}[ht]
  \includegraphics[width = 8cm]{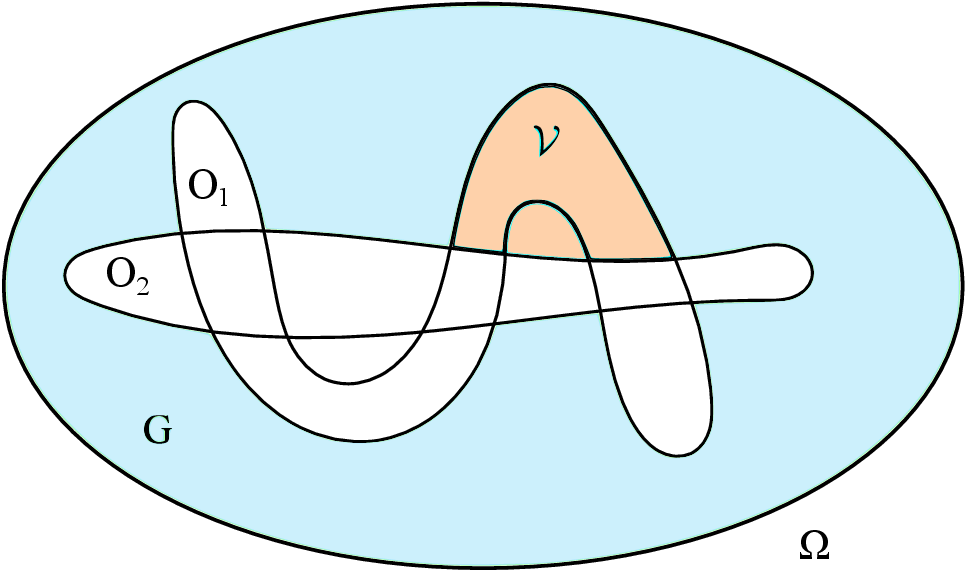}
  \caption{An illustration for set $G$ and $\cV$.}
\end{figure}
  Let $u_i^f$ solve \eqref{eq:direct_problem_lap} with Dirichlet condition $f$ on $\p\Omega$ and obstacle $O_i$, $\varphi_i$, for $i=1,2$, respectively. In view of \cite[Lemma 2.2]{FJ77} and \cite[Theorem 2.3.5]{GP09}, there holds $u_i^f\in H^2(\Omega\setminus \oO_i)\cap C^{1,\frac{1}{2}}(\overline{\Omega}\setminus O_i)$ for $i=1,2$, respectively. Let us denote the outward unit normal to $\Omega\setminus \oO_i$ on $\p O_i$ by $\nu_i$ for $i=1,2$, respectively.
  Let $\nu_\cV$ be the outward measure-theoretic unit normal to $\cV$ on the reduced boundary $\p^\ast \cV$.
  By virtue of \cite[Proposition 3.3]{HLLOZ25}, we have $\nu_\cV = \nu_2$ a.e. on $\p^\ast \cV\cap \p O_2$ and $\nu_\cV = - \nu_1$ a.e. on $\p^\ast \cV\setminus \p O_1$.
  Then the Signorini boundary condition for $u_2$ on $\p O_2$ reads
  \begin{equation}
    \label{eq:thm1_eq1}
    u_2^f\geq \varphi_2, \;\; \partial_{\nu}u_2^f\geq 0, \;\; (u_2^f-\varphi_2)\partial_{\nu} u_2^f =0 \textrm{ a.e. on } \p^\ast \cV\cap \p O_2,
  \end{equation}  
  for all $f\in C^\infty_0(\cS)$. Let $v^f_1$ solve
  \begin{equation}
    \begin{cases}
    \Delta v = 0,\text{ in }\Omega\setminus O_1,\\
    v|_{\p \Omega} = f, \p_{\nu_1} v|_{\p O_1} = 0.
    \end{cases}
  \end{equation}
  We claim that there exists a non-constant function $g\in C^\infty_0(\cS)$, such that $v^g_1 > \varphi_1$ on $\p O_1$. Indeed, let $\delta>0$ and $\phi\in C^\infty(\p O_1)$ such that $\phi>\varphi_1 + \delta$.
  According to Proposition \ref{prop:approxi_control_lap} and the Sobolev embedding theorem, there exists $h\in C^\infty_0 (\cS)$ such that $\norm{v^h|_{\p O} - \phi}_{C^0(\p O_1)}\leq \frac{\delta}{2}$. Thus $v^h_1|_{\p O}\geq \phi-\frac{\delta}{2}>\varphi_1$. and $v^f_1$ solves \eqref{eq:direct_problem_lap}. If $h$ is not a constant function, we set $g=h$. Otherwise, let $\eta\in C^\infty_0(\cS)$ be a non-constant function, then by Lemma \ref{lm:well_pose_mix_lap} and the trace theorem, there exists a small enough $\varepsilon>0$ such that 
  \begin{equation}
    \norm{v^{\varepsilon \eta}_1}_{C^0(\p O_1)} = \varepsilon \norm{v^{\eta}_1}_{C^0(\p O_1)} \leq \frac{\delta}{2}.
  \end{equation}
  Then we set $g = h+\varepsilon\eta$ which is non-constant and there holds
  \begin{equation}
    v^{g}|_{\p O}=v^{h+\varepsilon \eta}|_{\p O} \geq \phi-\delta >\varphi_1.
  \end{equation}
  This justifies our claim. Notice that $v_1^g$ solves \eqref{eq:direct_problem_lap} with the Dirichlet boundary condition $g$ on $\p\Omega$ and the obstacle $O_1$, $\varphi_1$. According to the uniqueness for the solution to \eqref{eq:direct_problem_lap}, we can conclude that $u_1^g = v_1^g$ in $\Omega\setminus \oO_1$. Due to $\Lambda_1 f = \Lambda_2 f$ and the unique continuation, we have $u_1^f = u_2^f$ in $G$. By the argument used in the proof of \cite[Theorem 1]{HLLOZ25}, we have 
  \begin{equation}
    \label{eq:thm1_eq2}
    u_2^g = u_1^g\geq \varphi_1,\text{ and }\partial_{\nu_1}u_2^g =\partial_{\nu_1}u_1^g = 0 \text{ a.e. on } \partial^*\mathcal{V}\setminus \partial O_2.
  \end{equation}
  We define a function $\psi \in L^\infty(\p^\ast \cV)$ as
  \begin{equation}
    \psi(x) : = 
    \begin{cases}
      \varphi_2(x),\text{ if }x\in \p^\ast \cV\cap \p O_2,\\
      \varphi_1(x),\text{ if }x\in \p^\ast \cV\setminus \p O_2.
    \end{cases}
  \end{equation}
  We note that $\psi$ is well-defined on $\p^\ast \cV$ in view of \cite[Lemma 2.4]{HLLOZ25}. Combining \eqref{eq:thm1_eq1} and \eqref{eq:thm1_eq2} yields
  \begin{equation}
    u_2^g\geq \psi, \;\; \partial_{\nu}u_2^g\geq 0, \;\; (u_2^f-\psi)\partial_{\nu} u_2^g =0 \textrm{ a.e. on } \p^\ast \cV.
  \end{equation}
  According to Lemma \ref{lm:lap_variational}, we have $u_2^g$ is a constant function in $\cV$. By virtue of \cite[Lemma 2.4]{HLLOZ25}, $\cV$ is connected to $\p\Omega$. Applying the unique continuation for $u^g_2$, we have $g$ is a constant on $\p\Omega$. This is a contradiction.
\end{proof}
Assume $\Lambda_1 = \Lambda_2$, now we can write $O = O_1 = O_2$. It only remains to show that $\varphi_1 = \varphi_2$ on $O$. To this end, we will need the uniqueness for the solution to a more complicated mixed boundary value problem. Let $\p O = \Gamma_N\cup \Gamma_S$, where $\Gamma_N$ is an open subsets of $\p O$, and let $\varphi$ be a smooth function on $\Gamma_S$. Then we consider the following mixed boundary value problem
\begin{equation}
  \label{eq:Sig_mix_lap}
  \begin{cases}
  \hfil  \Delta u = 0,\text{ in }\Omega\setminus \oO, \\ 
  \hfil  u|_{\p\Omega} = f,\ \p_{\bn} u|_{\Gamma_N} = 0,\\
  \hfil  u\geq \varphi,\ \p_{\bn} u \geq 0,\ (u-\varphi)\p_{\bn} u = 0,\text{ a.e. on }\Gamma_S. 
  \end{cases}
\end{equation}
\begin{lemma}
  \label{lm:Sig_mix_lap}
  Let $f\in C^\infty(\p\Omega)$. Then there exists a unique $u\in H^1(U)$ that solves \eqref{eq:Sig_mix_lap}.
\end{lemma}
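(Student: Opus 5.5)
The plan is to realize \eqref{eq:Sig_mix_lap} as the Euler--Lagrange system of a minimization problem on a closed convex set, and to obtain existence and uniqueness from the strict convexity of the Dirichlet energy. (Here $U$ denotes $\Omega\setminus\oO$.) Define
\begin{equation}
  \mathcal{K}_S := \{v\in H^1(\Omega\setminus\oO)\mid v|_{\p\Omega} = f,\ v|_{\Gamma_S}\geq \varphi \text{ a.e.}\},
\end{equation}
where the traces are taken in $H^{\frac12}$. This set is convex. It is closed because the trace map $H^1\to L^2(\p O)$ is continuous and the a.e.\ inequality is preserved along $L^2$-convergent subsequences. It is nonempty: take any $H^1$ extension of $f$ and add a function compactly supported near $\p O$ whose trace on $\p O$ is a large constant (or $\max(\varphi,\cdot)$-type smooth majorant of $\varphi$), using that $\p O$ and $\p\Omega$ are disjoint.

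\textbf{Existence.} I minimize $A(v)=\int_{\Omega\setminus\oO}|\nabla v|^2$ over $\mathcal{K}_S$. Writing $v=v_0+w$ with $w\in H^1_D$, the Poincar\'e inequality for $H^1_D$ (as in the proof of Lemma \ref{lm:mixed_lap_s_positive}) makes $A$ coercive on $\mathcal{K}_S$. The functional $A$ is weakly lower semicontinuous, and $\mathcal{K}_S$ is weakly closed since it is convex and strongly closed. Hence a minimizer $u$ exists.

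\textbf{Uniqueness.} If $u_1,u_2$ are both minimizers, then $u_1-u_2\in H^1_D$. Strict convexity of $A$ on $\mathcal{K}_S$, which follows from Poincar\'e since $\nabla(u_1-u_2)=0$ with zero trace on $\p\Omega$ forces $u_1=u_2$, together with convexity of $\mathcal{K}_S$, gives $u_1=u_2$ via the midpoint argument.

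\textbf{Variational inequality and recovery of the system.} The minimizer is characterized by
\begin{equation}
  \int_{\Omega\setminus\oO}\nabla u\cdot\nabla(v-u)\,dx\geq 0\quad\text{for all } v\in\mathcal{K}_S.
\end{equation}
Conversely, any weak solution of \eqref{eq:Sig_mix_lap} satisfies this inequality by Green's formula, since $(v-\varphi)\p_{\bn}u\geq0$ and $(u-\varphi)\p_{\bn}u=0$ on $\Gamma_S$, and $\p_{\bn}u=0$ on $\Gamma_N$. Hence uniqueness for the variational inequality yields uniqueness for \eqref{eq:Sig_mix_lap}, which is the content we actually need.

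The system itself is recovered from the inequality in three steps:
\begin{itemize}
  \item[(i)] Taking $v=u\pm\phi$ with $\phi\in C_0^\infty(\Omega\setminus\oO)$ gives $\Delta u=0$.
  \item[(ii)] Taking $v=u\pm\phi$ with $\phi$ supported near $\Gamma_N$ (away from $\Gamma_S$) gives $\p_{\bn}u=0$ on $\Gamma_N$ in $H^{-\frac12}$.
  \item[(iii)] Taking $v=u+\phi$ with $\phi\geq0$ gives $\p_{\bn}u\geq0$ on $\Gamma_S$. Then $v=\varphi$-type competitors, namely $v=u-t(u-\varphi)\chi$ with a cutoff $\chi$, give $\langle \p_{\bn}u,u-\varphi\rangle=0$.
\end{itemize}

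\textbf{Main obstacle.} The delicate step is the complementarity condition in the pointwise a.e.\ sense: a priori $\p_{\bn}u$ is only in $H^{-\frac12}$. I would handle this by localizing. Near $\p O$, the problem is a standard thin obstacle problem with a mixed Neumann part, so the regularity results cited before \eqref{eq:direct_problem_lap} (Frehse's $H^2$ and $C^{1,\alpha}$) apply away from the junction $\overline{\Gamma_N}\cap\overline{\Gamma_S}$. Since Neumann and Signorini conditions are compatible (Neumann is Signorini with an obstacle at $-\infty$), I expect at least $H^{3/2-\epsilon}$ or $W^{1,p}$ regularity across the junction, enough to make $\p_{\bn}u$ an $L^1_{loc}$ function and the product $(u-\varphi)\p_{\bn}u$ meaningful a.e. If this regularity is unavailable, I would state the boundary conditions in the weak (variational) sense; the uniqueness argument above is unaffected.
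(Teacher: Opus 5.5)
Your proposal is correct and is essentially the paper's argument. The paper lifts $f$ by some $\phi\in H^2(\Omega\setminus\oO)$ with $\phi|_{\p O}=\p_{\bn}\phi|_{\p O}=0$ and applies Stampacchia's theorem to the Poincar\'e-coercive form $\int\nabla w\cdot\nabla v$ on the closed convex set $\{v\in H^1:\ v|_{\p\Omega}=0,\ v\geq\varphi \text{ a.e. on }\Gamma_S\}$; for this symmetric form that is the same as your direct minimization of the Dirichlet energy over $\mathcal{K}_S$.

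Your explicit Green's-formula link between sufficiently regular solutions of the system and the variational inequality is something the paper leaves implicit. It is exactly what justifies the later use of the lemma for the $C^1$ functions $v^f$ and $u_2^f$, since only uniqueness among such solutions is needed. So the speculative regularity across $\overline{\Gamma_N}\cap\overline{\Gamma_S}$ is not required.
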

\begin{proof}
  According to the trace theorem, we can find  a function $\phi\in H^2(\Omega\setminus\oO)$ such that 
  \begin{equation}
    \phi|_{\p\Omega} = f,\ \phi|_{\p O} = 0,\ \p_{\bn} \phi|_{\p O} = 0.
  \end{equation}
  Therefore, $w : = u-\phi$ solves 
  \begin{equation}
    \label{eq:Sig_mix_lap_w}
  \begin{cases}
  \hfil  \Delta w = -\Delta \phi,\text{ in }\Omega\setminus \oO, \\ 
  \hfil  w|_{\p\Omega} = 0,\ \p_{\bn} w|_{\Gamma_N} = 0,\\
  \hfil  w\geq \varphi,\ \p_{\bn} w \geq 0,\ (w-\varphi)\p_{\bn} w = 0,\text{ a.e. on }\Gamma_S. 
  \end{cases}
\end{equation}
  Define the set
  \begin{equation}
    V : = \{v\in H^1(U) \mid v = 0 \text{ on }\p\Omega, v\geq \varphi \text{ a.e. on }\Gamma_S\}
  \end{equation}
  and the bilinear form 
  \begin{equation}
    a(w,v): = \int_{\Omega\setminus \oO} \nabla w\cdot \nabla v\ dx,\ w,v\in V.
  \end{equation}
  Due to the Poincare's inequality, $a(\cdot, \cdot)$ is a continuous coercive bilinear form on $V$.
  Then the variational form of \eqref{eq:Sig_mix_lap_w} amounts to finding $w\in V$ such that 
  \begin{equation}
    a(w,v-w) \geq \int_{\Omega\setminus \oO} (v-w)\Delta \phi \ dx
  \end{equation}
  Since $\Delta \phi\in L^2(\Omega\setminus\oO)$, the solvability and the uniqueness for the solution to this variational problem follows immediately from the Stampacchia's theorem (see e.g. \cite[Theorem 5.6]{Brezis}). 
\end{proof}
\begin{theorem}
  \it Let $\Omega\subset \mathbb{R}^n$ be a bounded connected open set with smooth boundary. Let $O\subset\subset \Omega$ be a non-empty open subset with smooth boundary, and $\varphi_1,\varphi_2\in C^\infty(\p O)$.
  Assume that $\Omega\setminus \overline{O}$ is connected. 
  Let $\Lambda_1$, $\Lambda_2$ be the Dirichlet-to-Neumann map defined in \eqref{eq:def_DN} with obstacle functions $\varphi_1$, $\varphi_2$, respectively.
  If $\Lambda_1=\Lambda_2$, then $\varphi_1=\varphi_2$.
\end{theorem}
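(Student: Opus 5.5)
We argue by contradiction. Suppose $\varphi_1\neq\varphi_2$; after relabeling there is a point $x_0\in\p O$ with $\varphi_1(x_0)<\varphi_2(x_0)$. By continuity we fix a relatively open neighbourhood $\Gamma\subset\p O$ of $x_0$ and a $\delta>0$ with $\varphi_2>\varphi_1+2\delta$ on $\overline{\Gamma}$. The plan is to build, by approximate controllability, a boundary datum for which the obstacle $\varphi_1$ is touched only inside $\Gamma$. The obstacle $\varphi_2$, which is larger there, would then force a different solution. Equal Cauchy data on $\cR$ together with unique continuation then give a contradiction.

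\textbf{Step 1 (construction).} We choose $\phi\in C^\infty(\p O)$ with $\phi=\varphi_1$ on a smaller closed neighbourhood $\overline{\Gamma'}\subset\Gamma$ of $x_0$ and $\phi>\max(\varphi_1,\varphi_2)+\delta$ outside $\Gamma$. We cannot simply hit $\phi$ with a pure Neumann solution $v^f$, because on the contact set $\p_{\bn}u$ is generally nonzero. So instead we argue at the level of the Signorini problems directly. For $f\in C^\infty_0(\cS)$, Proposition \ref{prop:approxi_control_lap} gives $h$ such that $v_1^h|_{\p O}$ is $C^0$-close to $\phi+\delta$, in particular $>\varphi_1$ on $\p O$. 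Hence $u_1^h=v_1^h$, with zero Neumann flux and no contact. We then decrease the data along a one-parameter family $f_t=h-t\eta$, where $\eta\in C^\infty_0(\cS)$, $\eta\ge0$, so that the solutions $u_i^{f_t}$ move down by monotonicity (comparison principle for the Signorini problem). Let $t_1$ be the first time $u_1^{f_t}$ touches $\varphi_1$. Because of the $\delta$-margins, both the first contact point of $u_1$ and the corresponding first contact time for $u_2$ can be localized. Since $\varphi_2>\varphi_1+2\delta$ on $\Gamma$ while $v^{f_t}$ stays above $\varphi_2$ off $\Gamma$, we get $t_2<t_1$: obstacle 2 is contacted strictly earlier.

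\textbf{Step 2 (contradiction).} For $t\in(t_2,t_1)$, $u_1^{f_t}=v_1^{f_t}$ has $\p_{\bn}u_1^{f_t}=0$ on all of $\p O$. Meanwhile $u_2^{f_t}$ satisfies the problem \eqref{eq:Sig_mix_lap} with $\Gamma_S$ a neighbourhood of the contact region and $\Gamma_N$ the rest. Since $\Lambda_1=\Lambda_2$, the functions $u_1^{f_t}$ and $u_2^{f_t}$ share Dirichlet and Neumann data on $\cS\cap\cR$ (or on $\cR$, using interior unique continuation as in the proof of Proposition \ref{prop:approxi_control_lap}). Unique continuation on the connected set $\Omega\setminus\oO$ then gives $u_1^{f_t}=u_2^{f_t}$. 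But $u_1^{f_t}$ solves the pure Neumann problem, so $u_2^{f_t}=u_1^{f_t}$ would have $\p_{\bn}u_2^{f_t}=0$ on $\p O$ and hence would coincide with $v^{f_t}$. That requires $v^{f_t}\ge\varphi_2$, which fails at the point where $u_2$ is in contact and $v^{f_t}$ has already dropped below $\varphi_2$. Lemma \ref{lm:Sig_mix_lap} is used to justify that the $u_2$ problem is uniquely solvable, so this identification is legitimate.

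\textbf{Main obstacle.} The hard part is making Step 1 rigorous: showing that a datum exists for which $v^f$ dips below $\varphi_2$ somewhere while staying $\ge\varphi_1$ everywhere. For this I would try the simpler route of picking $\phi$ directly with $\varphi_1+\delta/2<\phi<\varphi_2-\delta/2$ near $x_0$ and $\phi>\max(\varphi_1,\varphi_2)$ elsewhere. I would then approximate $\phi$ in $C^0$ via Proposition \ref{prop:approxi_control_lap} and Sobolev embedding, which avoids the continuity-method argument altogether. This gives $u_1^f=v^f$, while $v^f$ is not admissible for obstacle $\varphi_2$, so $u_2^f\neq v^f$. Unique continuation from the equal Cauchy data on $\cR$ then forces $u_2^f=u_1^f=v^f$, a contradiction.
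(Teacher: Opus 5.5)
Your final argument, the ``simpler route'' in your last paragraph, is correct. It is a genuinely different and shorter proof than the paper's.

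\textbf{How the routes differ.} The paper never orders $\varphi_1,\varphi_2$. It first combines $u_1^f=u_2^f$ (for every $f$) with both complementarity conditions and the $C^1$ regularity of $u_2^f$ to get $\p_{\bn}u_2^f=0$ on a patch $\Gamma_r$ where $\varphi_1\neq\varphi_2$. Thus $u_2^f$ solves the mixed Neumann--Signorini problem \eqref{eq:thm3_1}. It then builds $v^f$ lying above $\varphi_2$ off $\Gamma_r$ but below $\varphi_2$ at some $y\in\Gamma_r$, and needs uniqueness for that mixed problem (Lemma \ref{lm:Sig_mix_lap}) to conclude $u_2^f=v^f$.

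You instead use the symmetry to assume $\varphi_1(x_0)<\varphi_2(x_0)$ and produce one datum $f\in C_0^\infty(\cS)$ for which $v^f$ is a contact-free solution for $\varphi_1$. Hence $u_1^f=v^f$ by ordinary Signorini uniqueness, exactly as in the paper's shape-recovery step. Yet $v^f$ violates $u\geq\varphi_2$ near $x_0$. Equality of the Cauchy data on $\cR$ and unique continuation give $u_2^f=u_1^f$, a contradiction.

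\textbf{What each buys.} Your route dispenses with Lemma \ref{lm:Sig_mix_lap} and with any pointwise information on $\p_{\bn}u_2^f$. It should also carry over to the Lam\'e case: assume $\varphi_2(x_0)<\varphi_1(x_0)$, since there the constraint is $\bu_\nu\leq\varphi$, and target $(\varphi_1-\epsilon)\bn$ on $\p O$. That would avoid the separate well-posedness of \eqref{eq:thm6_1}. The paper's route needs no relabeling and localizes the mechanism to the set where $\varphi_1\neq\varphi_2$. It pays for this with the extra mixed-problem lemma and the $C^1$ regularity of $u_2^f$.

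\textbf{For the write-up.} Requiring $\phi>\max(\varphi_1,\varphi_2)$ away from $x_0$ is unnecessary. Taking $\phi=\varphi_1+\epsilon$ with $\epsilon=\frac12(\varphi_2(x_0)-\varphi_1(x_0))$ and $C^0$ tolerance $\epsilon/2$ suffices. Also state explicitly why $u_2^f\neq v^f$: $v^f<\varphi_2$ on a relatively open neighbourhood of $x_0$ in $\p O$, while $u_2^f\geq\varphi_2$ a.e.\ there. Finally, the Cauchy data agree on all of $\cR$ because both Dirichlet traces equal $f$, so $\cS\cap\cR$ (possibly empty) plays no role.

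The continuity method of Steps 1--2 should be deleted. It relies on a comparison principle and a localisation of first-contact times that are not justified as written. It is also unnecessary, since no contact with $\varphi_1$ is ever required, and with it goes the appeal to Lemma \ref{lm:Sig_mix_lap}.
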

\begin{proof}
  To get a contradiction, we assume there exists $x\in \p O$ such that $\varphi_1(x)\neq \varphi_2(x)$. Since $\varphi_1$ and $\varphi_2$ are continuous, we can find a small enough $r>0$ such that $\varphi_1\neq \varphi_2$ in the ball $B(x,r)\cap \p O$, where $B(x,r)$ is the ball in $\R^n$ centered at $x$ with radius $r$. To simplify the notation, we denote $B(x,r)\cap \p O$ by $\Gamma_r$.
  Since $\Lambda_1 = \Lambda_2$, we have $u_1^f = u_2^f$ in $\Omega \setminus \oO$ by unique continuation. Therefore, we have 
  \begin{equation}
    (u_2^f-\varphi_1)\p_{\bn} u_2^f = (u_1^f-\varphi_1)\p_{\bn} u_1^f = 0 = (u_2^f-\varphi_2)\p_{\bn} u_2^f,\text{ a.e. on }\Gamma_r
  \end{equation}
  for all $f\in C^\infty_0(\cS)$.
  This implies that 
  \begin{equation}
    (\varphi_2 - \varphi_1)\p_{\bn} u_2^f = 0, ,\text{ a.e. on }\Gamma_r.
  \end{equation}
  Since $\varphi_1\neq \varphi_1$ on $\Gamma_r$ and $u_2^f\in C^1(\overline{\Omega}\setminus O)$, we can deduce that 
  \begin{equation}
    \p_{\bn} u_2^f = 0 \text{ on }\Gamma_r
  \end{equation} 
  for all $f\in C^\infty_0(\cS)$. Thus $u_2^f$ solves 
  \begin{equation}
    \label{eq:thm3_1}
  \begin{cases}
  \hfil  \Delta u = 0,\text{ in }\Omega\setminus \oO, \\ 
  \hfil  u|_{\p\Omega} = f,\ \p_{\bn} u|_{\Gamma_r} = 0,\\
  \hfil  u\geq \varphi_2,\ \p_{\bn} u \geq 0,\ (u-\varphi_2)\p_{\bn} u = 0,\text{ a.e. on }\p O\setminus \Gamma_r. 
  \end{cases}
\end{equation}
Let $\phi\in C_0^\infty(\Gamma_r)$ such that $\phi\geq 0$ and let $y\in \Gamma_r$ be such that $\phi(y)>0$. Let $\delta>0$ and $\psi\in C^\infty(\p O)$ such that $\psi>\varphi_2+\delta$. Then we can find a large enough constant $C\in \R$ such that $\rho: = \psi - C \phi$ satisfies
\begin{equation}
  \rho(y) = \psi(y) - C \phi(y)<\varphi_2(y) - \delta.
\end{equation}
By virtue of Proposition \ref{prop:approxi_control_lap}, there exists $f\in C^\infty_0(\cS)$ such that $\norm{v^f - \rho}_{C^0(\p O)}\leq \delta$, where $v^f$ solves \eqref{eq:mixed_lap_0}. Notice that $\p_{\bn} v^f = 0$ on $\p O$ and 
\begin{equation}
  v^f \geq \rho - \delta = \psi-\delta > \varphi_2,\text{ on }\p O\setminus \Gamma_r,
\end{equation}
since $\phi=0$ on $\p O\setminus \Gamma_r$.
Thus $v^f$ also solves \eqref{eq:thm3_1}. By virtue of Lemma \ref{lm:Sig_mix_lap}, we have $v^f = u_2^f$. Hence, there holds
\begin{equation}
  u_2^f(y) = v^f(y) \leq \rho(y) +\delta < \varphi_2(y).
\end{equation}
This leads to a contradiction to the fact that $u_2^f$ satisfies the Signorini boundary condition on $\p O$.
\end{proof}

\section{Mixed boundary value problem for Lam\'e system}
\label{sec:mixed_ela}
In this section, we extends this analysis in Section \ref{sec:mixed_lap} to the vectorial case and investigate the regularity and boundary controllability for the following mixed boundary value problem for the Lam\'e
system.
\begin{equation}
    \label{eq:mixed_ela}
    \begin{cases}
    \hfil  \div \bs(\bu)=\bpsi, \text{ in }\Omega\setminus \overline{O},\\
    \hfil \bu|_{\p\Omega}=\boldsymbol{f},\ \bs(\bu)\bn|_{\p O} = \bg.
    \end{cases}
  \end{equation} 
To simplify the notations, for any subset $U$ of $\R^n$ and $s\in \R$, we denote $(H^s(U))^n$ by $\bH^s(U)$. And for any $\bu: = (u_1,\cdots, u_n)\in \bH^s(U)$, the norm on $\bH^s(U)$ reads 
\begin{equation}
  \norm{\bu}_{\bH^s(U)} : =\left( \sum_{i=1}^n \norm{u_i}_{H^s(U)}^2\right)^{\frac{1}{2}}.
\end{equation}
Similarly, we denote $(\tH^{-s}(\Omega\setminus \oO))^n$ and $(\mathcal{D}(U))^n$ by $\tbH^{-s}(\Omega\setminus \oO)$ and $\boldsymbol{\mathcal{D}}(U)$, respectively.

Next we show the well-posedness of \eqref{eq:mixed_ela} provided that the source and boundary values are regular enough.
\begin{lemma}
  \label{lm:mixed_ela_s_positive}
  Let $s\geq 0$ and $\bpsi\in \bH^s(\Omega\setminus\oO)$, $\bf\in \bH^{s+\frac{3}{2}}(\p\Omega)$, $\bg\in \bH^{s+\frac{1}{2}}(\p O)$. Then \eqref{eq:mixed_lap} admits a unique solution $\bu\in \bH^{s+2}(\Omega\setminus \oO)$. Moreover, we have the estimate
  \begin{equation}
    \norm{\bu}_{\bH^{s+2}(\Omega\setminus \oO)}\lesssim \norm{\bpsi}_{\bH^{s}(\Omega\setminus\oO)} + \norm{\bf}_{\bH^{s+\frac{3}{2}}(\p \Omega)} + \norm{\bg}_{\bH^{s+\frac{1}{2}}(\p O)}.
  \end{equation}
\end{lemma}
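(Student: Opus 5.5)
We follow the proof of Lemma \ref{lm:mixed_lap_s_positive} and replace the Dirichlet form of the Laplacian by the elastic energy form. (The statement cites \eqref{eq:mixed_lap}; we read it as \eqref{eq:mixed_ela}.)

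\emph{Step 1: reduction to homogeneous boundary data.} By the trace theorem, applied componentwise, we choose $\bphi\in\bH^{s+2}(\Omega\setminus\oO)$ with $\bphi|_{\p\Omega}=\bf$, $\bphi|_{\p O}=\0$ and $\p_{\bn}\bphi|_{\p O}=\brho$. Here $\brho$ is fixed so that $\bs(\bphi)\bn|_{\p O}=\bg$. Since $\bphi$ vanishes on $\p O$, its tangential derivatives vanish there, so $\nabla\bphi=\brho\otimes\bn$ on $\p O$. Hence $\bs(\bphi)\bn=\mu\brho+(\mu+\lambda)(\brho\cdot\bn)\bn$. This is an invertible linear map of $\brho$ under the standard assumptions $\mu>0$, $\lambda+2\mu>0$, with inverse of the same smoothness as $\bn$. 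Therefore $\brho\in\bH^{s+\frac12}(\p O)$ exists, and $\norm{\bphi}_{\bH^{s+2}}\lesssim\norm{\bf}_{\bH^{s+\frac32}(\p\Omega)}+\norm{\bg}_{\bH^{s+\frac12}(\p O)}$. Then $\bw:=\bu-\bphi$ solves \eqref{eq:mixed_ela} with source $\bpsi-\div\bs(\bphi)\in\bH^s$ and zero Dirichlet and traction data.

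\emph{Step 2: variational solution.} On $\bH^1_D:=\{\bv\in\bH^1(\Omega\setminus\oO)\mid \bv|_{\p\Omega}=\0\}$ we take the bilinear form $a(\bw,\bv)=\int\bs(\bw):\be(\bv)\,dx$ and the functional $\bv\mapsto-\int(\bpsi-\div\bs(\bphi))\cdot\bv$. Integration by parts shows that traction-free conditions on $\p O$ are natural for this form.

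The main point here is coercivity. Pointwise, $\bs(\bv):\be(\bv)=2\mu\abs{\be(\bv)}^2+\lambda(\div\bv)^2\gtrsim\abs{\be(\bv)}^2$ for standard Lamé parameters. Korn's inequality for fields vanishing on the part $\p\Omega$ of the boundary (positive measure, connected domain) gives $\norm{\bv}_{\bH^1}\lesssim\norm{\be(\bv)}_{L^2}$ on $\bH^1_D$. This part of Korn's inequality, rather than Poincaré, is the genuinely new ingredient. Lax--Milgram then gives a unique $\bw\in\bH^1_D$ with $\norm{\bw}_{\bH^1}\lesssim\norm{\bpsi}_{\bH^0}+\norm{\bphi}_{\bH^2}$. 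Uniqueness in $\bH^{s+2}$ follows because any such solution is also a weak solution.

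\emph{Step 3: higher regularity.} We pick smooth $U_1,U_2$ with $O\subset\subset U_1\subset\subset U_2\subset\subset\Omega$. In $\Omega\setminus\overline{U_1}$, $\bw$ solves the Lamé system with homogeneous Dirichlet data. In $U_2\setminus\oO$, it solves the system with homogeneous traction data. The Lamé operator is strongly elliptic, and both the Dirichlet and traction conditions satisfy the Lopatinskii (coercivity) condition, so \cite[Theorem 4.18]{Mclean2000} applies locally to both pieces. Combining them gives $\norm{\bw}_{\bH^{s+2}}\lesssim\norm{\bpsi-\div\bs(\bphi)}_{\bH^s}+\norm{\bw}_{\bH^1}$. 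Adding back $\bphi$ yields the estimate. The only step needing real care beyond the scalar case is the Korn inequality and coercivity in Step 2; everything else transfers verbatim.
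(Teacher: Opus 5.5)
Your proof is correct, but it obtains existence by a different mechanism than the paper; the higher-regularity step via \cite[Theorem 4.18]{Mclean2000} on $\Omega\setminus\overline{U_1}$ and $U_2\setminus\oO$ is the same in both.

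\textbf{The paper's route.} The paper never lifts the boundary data. It first shows that the homogeneous problem (zero source, zero displacement on $\p\Omega$, zero traction on $\p O$) has only the trivial solution. Testing with the solution itself gives $\int 2\mu\abs{\be(\bv)}^2+\lambda(\tr\be(\bv))^2=0$, hence $\be(\bv)=0$, and Korn's inequality on fields vanishing on $\p\Omega$ gives $\bv=0$. It then invokes the Fredholm alternative \cite[Theorem 4.10]{Mclean2000}, valid for forms satisfying G\r{a}rding's inequality. This produces a unique $\bH^1$ solution directly for the inhomogeneous data, bounded by $\norm{\bpsi}_{(\bH^1)^\ast}+\norm{\bf}_{\bH^{1/2}(\p\Omega)}+\norm{\bg}_{\bH^{-1/2}(\p O)}$.

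\textbf{Your route.} You follow the template of the scalar Lemma \ref{lm:mixed_lap_s_positive}. You build an explicit $\bH^{s+2}$ lift realizing the traction datum; your inversion of $\brho\mapsto\mu\brho+(\mu+\lambda)(\brho\cdot\bn)\bn$ is exactly the computation in the paper's appendix Lemma \ref{lm:trace_ela}, which the paper uses only later, in Lemma \ref{lm:mixed_ela_s_negative}. You then use Korn's inequality to get genuine coercivity, so plain Lax--Milgram gives existence and uniqueness at once.

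\textbf{Common ingredients.} Both arguments rest on the same two facts: Korn's inequality with the Dirichlet condition on $\p\Omega$, and pointwise positivity of $\bs(\bv):\be(\bv)$ in $\be(\bv)$. The latter needs $\mu>0$ together with $2\mu+n\lambda>0$, which is stronger than the $2\mu+\lambda>0$ you cite for the lift; the paper's assumption $\mu,\lambda>0$ covers it.

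\textbf{Trade-off.} Your approach is more elementary and self-contained: no Fredholm theory is needed, and the constants are explicit. It costs you the traction-lift construction. The paper's approach avoids any lifting and handles rough data at the $\bH^1$ level directly, but relies on a heavier black-box theorem.
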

\begin{proof}
  First we consider the homogeneous problem
  \begin{equation}
    \label{eq:mixed_ela_homo}
    \begin{cases}
    \hfil  \div \bs(\bu)=0, \text{ in }\Omega\setminus \overline{O},\\
    \hfil \bu|_{\p\Omega}=0,\ \bs(\bu)\bn|_{\p O} = 0.
    \end{cases}
  \end{equation}
  We define the vector-valued function space
  \begin{equation}
    \bV : = \{\bv\in \bH^1(\Omega\setminus \oO)\mid \bv|_{\p\Omega} = 0\}.
  \end{equation}
  Then we define the bilinear form associated with the Lam\'e system as
  \begin{equation}
    A(\bv,\bw): = \int_{\Omega\setminus \oO} \bs(\bv): \be(\bw)\ dx,\ \bv,\bw\in \bV.
  \end{equation} 
  If $\bv\in \bV$ is a solution of \eqref{eq:mixed_ela_homo}, by virtue of \cite[Theorem 4.4]{Mclean2000}, there holds 
  \begin{align}
    0&=A(\bv,\bv) = \int_{\Omega\setminus \oO} (2\mu \be(\bv)+\lambda\tr(\be(v))\boldsymbol{I}_n): \be(\bv)\ dx\\
    &=\int_{\Omega\setminus \oO} 2\mu \be(\bv):\be(\bv)+\lambda\tr(\be(v))^2 \ dx.
  \end{align}
  Since $\nu$ and $\lambda$ are positive smooth functions, we have $\be(\bv):\be(\bv) = 0$ a.e. in $\Omega\setminus \oO$, and consequently, $\norm{\be(\bv)}_{(L^2(\Omega\setminus\oO))^{n\times n}} = 0$. Moreover, since $\cH^{n-1}(\p\Omega)>0$, applying Korn's inequality (see e.g. \cite[eq(4.10)]{sofonea2012}) on $\bV$ yields
  \begin{equation}
    \norm{\bv}_{\bH^1(\Omega\setminus \oO)}\lesssim \norm{\be(\bv)}_{(L^2(\Omega\setminus\oO))^{n\times n}} = 0.
  \end{equation}
  Therefore, the homogeneous problem \eqref{eq:mixed_ela_homo} has only the trivial solutions. Thanks to \cite[Theorem 4.10]{Mclean2000}, there exists a unique solution $\bu\in \bH^1(\Omega\setminus \oO)$ for \eqref{eq:mixed_lap}. Moreover, there holds
  \begin{align}
    \norm{\bu}_{\bH^1(\Omega\setminus \oO)}&\lesssim \norm{\bpsi}_{(\bH^1(\Omega\setminus \oO))^\ast} + \norm{\bf}_{\bH^{\frac{1}{2}}(\p\Omega)} + \norm{\bg}_{\bH^{-\frac{1}{2}}(\p O)}\\
    &\leq \norm{\bpsi}_{\bH^{s}(\Omega\setminus\oO)} + \norm{\bf}_{\bH^{s+\frac{3}{2}}(\p \Omega)} + \norm{\bg}_{\bH^{s+\frac{1}{2}}(\p O)}.
  \end{align}
  for any $s\geq 0$. Analogous to the proof for Lemma \ref{lm:mixed_lap_s_positive}, since $O\subset\subset \Omega$, we can find smooth domains $U_1$, $U_2$ and $U_2$ such that 
  \begin{equation}
    O\subset\subset U_1\subset\subset U_2\subset \subset \Omega.
  \end{equation}
  Thus $\bu$ solves a local elliptic Dirichlet system in $\Omega \setminus \overline{U_1}$ and a local elliptic Neumann system in $U_2\setminus\oO$.
  Thanks to \cite[Theorem 4.18]{Mclean2000}, there holds
  \begin{align}
    \norm{\bu}_{\bH^{s+2}(\Omega\setminus \oO)}&\lesssim \norm{\bpsi}_{\bH^s(\Omega\setminus \oO)} + \norm{\bf}_{\bH^{s+\frac{3}{2}}(\p\Omega)}  + \norm{\bg}_{\bH^{s+\frac{1}{2}}(\p O)} +  \norm{\bu}_{\bH^{1}(\Omega\setminus \oO)}\\
    &\lesssim \norm{\bpsi}_{\bH^{s}(\Omega\setminus\oO)} + \norm{\bf}_{\bH^{s+\frac{3}{2}}(\p \Omega)} + \norm{\bg}_{\bH^{s+\frac{1}{2}}(\p O)}.
  \end{align}
\end{proof} 
\begin{lemma}
  \label{lm:mixed_ela_s_negative}
  Let $s\geq 0$ and $\bf\in \bH^{-s-\frac{1}{2}}(\p\Omega)$, $\bg\in \bH^{-s-\frac{3}{2}}(\p O)$. Then \eqref{eq:mixed_ela} admits a unique solution $\bu\in \tbH^{-s}(\Omega\setminus \oO)$.
\end{lemma}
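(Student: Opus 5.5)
The argument follows the transposition (duality) proof of Lemma \ref{lm:well_pose_mix_lap}, replacing the Laplacian by the Lam\'e operator and using Lemma \ref{lm:mixed_ela_s_positive} for the adjoint problem. Since the Lam\'e operator with constant (or smooth) positive coefficients is formally self-adjoint, Betti's formula plays the role of Green's second identity. For $\bu,\bv$ smooth up to the boundary it reads
\begin{equation}
\int_{\Omega\setminus\oO}\big(\div\bs(\bu)\cdot\bv-\bu\cdot\div\bs(\bv)\big)\,dx=\int_{\p\Omega}\big(\bs(\bu)\n\cdot\bv-\bu\cdot\bs(\bv)\n\big)+\int_{\p O}\big(\bs(\bu)\bn\cdot\bv-\bu\cdot\bs(\bv)\bn\big).
\end{equation}
First I would extend this by density to $\bu\in\tbH^{-s}(\Omega\setminus\oO)$ and $\bv\in\bH^{s+2}(\Omega\setminus\oO)$, as the vector analogue of Lemma \ref{lm:Gauss_Green_lap}. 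Here the traces $\bu|_{\p U}\in\bH^{-s-\frac12}$ and $\bs(\bu)\bn|_{\p U}\in\bH^{-s-\frac32}$ are defined through the Roitberg theory (\cite[Theorem 6.1.1]{Roitberg96}) for elliptic systems. That theory applies because the Lam\'e system is strongly elliptic and the traction operator covers it, so the analogue of the norm equivalence \eqref{eq:norm_equivalence} holds with $\Delta$ replaced by $\div\bs$.

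Next I define a functional. Given $\bphi\in\bH^s(\Omega\setminus\oO)$, let $\bw$ solve $\div\bs(\bw)=\bphi$ with $\bw|_{\p\Omega}=0$ and $\bs(\bw)\bn|_{\p O}=0$. By Lemma \ref{lm:mixed_ela_s_positive}, $\bw\in\bH^{s+2}$ with $\norm{\bw}_{\bH^{s+2}}\lesssim\norm{\bphi}_{\bH^s}$. The trace theorem then gives $\norm{\bs(\bw)\n}_{\bH^{s+\frac12}(\p\Omega)}+\norm{\bw}_{\bH^{s+\frac32}(\p O)}\lesssim\norm{\bphi}_{\bH^s}$. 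Set
\begin{equation}
L(\bphi):=\inner{\bf,\bs(\bw)\n}_{\p\Omega}-\inner{\bg,\bw}_{\p O}.
\end{equation}
This is a bounded linear functional on $\bH^s(\Omega\setminus\oO)$, so by Riesz there is $\bu\in(\bH^s(\Omega\setminus\oO))^\ast$ with $L(\bphi)=\inner{\bu,\bphi}$. Testing with $\bphi=\div\bs(\bpsi)$ for $\bpsi\in\boldsymbol{\mathcal D}(\Omega\setminus\oO)$ gives $\bw=\bpsi$, whose traces vanish, so $\div\bs(\bu)=0$ in the weak sense. The Roitberg theory then places $\bu\in\tbH^{-s}$ with well-defined traces.

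To identify the boundary data, I choose $\bv\in\bH^{s+2}$ with prescribed Cauchy-type data. One choice is $\bv|_{\p\Omega}=0$, $\bs(\bv)\n|_{\p\Omega}=\bzeta$, $\bv|_{\p O}=0$, $\bs(\bv)\bn|_{\p O}=0$. The other is $\bv|_{\p\Omega}=0$, $\bs(\bv)\n|_{\p\Omega}=0$, $\bv|_{\p O}=\bxi$, $\bs(\bv)\bn|_{\p O}=0$. Such $\bv$ exist because the map from $\bv$ to $(\bv,\bs(\bv)\bn)$ is a normal system of boundary operators: the traction trace is $\mu\p_{\bn}\bv+(\text{tangential derivatives and the normal derivative of }\bv_\nu)$, and its top normal-derivative symbol matrix $\mu I+(\lambda+\mu)\bn\otimes\bn$ is invertible. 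So we can prescribe $\bv$ and $\p_{\bn}\bv$ to realize any traction via \cite[Chapter 1, Theorem 9.4]{Lions}. Since $\div\bs(\bv)\in\bH^s$ is the $\bphi$ whose adjoint solution is $\bv$ itself, plugging into Betti's formula and comparing with $L$ yields $\inner{\bu,\bzeta}_{\p\Omega}=\inner{\bf,\bzeta}$ and $\inner{\bs(\bu)\bn,\bxi}_{\p O}=\inner{\bg,\bxi}$ for all $\bzeta,\bxi$. Hence $\bu|_{\p\Omega}=\bf$ and $\bs(\bu)\bn|_{\p O}=\bg$.

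Uniqueness follows by linearity. If $\bu\in\tbH^{-s}$ solves the problem with zero data, Betti's formula against the adjoint solution $\bw$ for arbitrary $\bphi$ gives $\inner{\bu,\bphi}=0$, so $\bu=0$.

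The main obstacle is the justification of the generalized Green (Betti) formula and of the trace theory in $\tbH^{-s}$ for the Lam\'e system with traction boundary operator, that is, verifying the Roitberg hypotheses: proper ellipticity, covering (Lopatinskii) conditions for both the Dirichlet and traction boundary operators, and normality of the boundary operators. I would verify these via the symbol computation above and otherwise cite the general theorem for elliptic systems.
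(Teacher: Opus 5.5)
Your proposal is correct and follows the paper's proof essentially step for step. Both arguments use a transposition functional built from the adjoint problem of Lemma \ref{lm:mixed_ela_s_positive}, then Roitberg's theory to place $\bu$ in $\tbH^{-s}(\Omega\setminus\oO)$. Both then identify the boundary data by testing the Green formula of Lemma \ref{lm:Green_ela} against functions with prescribed Cauchy data, which you construct from the invertibility of $\mu I+(\lambda+\mu)\bn\otimes\bn$ exactly as in Lemma \ref{lm:trace_ela}.

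The one deviation is uniqueness: you pair a zero-data solution against arbitrary adjoint solutions, whereas the paper appeals to the trivial kernel in Lemma \ref{lm:mixed_ela_s_positive}. Your argument is the cleaner one, since that kernel result is only proved for $\bH^1$ solutions and does not by itself cover elements of $\tbH^{-s}(\Omega\setminus\oO)$.
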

\begin{proof}
  Let $\bpsi \in \bH^{s}(\Omega\setminus \oO)$ and $\bw$ solves 
  \begin{equation}
    \label{eq:mixed_ela_0}
    \begin{cases}
      \div \bs(\bw) = \bpsi,\text{ in }\Omega\setminus \oO,\\
      \bw|_{\p\Omega} = 0,\ \bs(\bw)\bn|_{\p O} = 0.
    \end{cases}
  \end{equation}
  According to Lemma \ref{lm:mixed_ela_s_positive}, we have $\bw\in \bH^{s+2}(\Omega\setminus \oO)$. Moreover, the trace theorem reads
  \begin{equation}
    \norm{\bw}_{\bH^{s+\frac{3}{2}}(\p O)} + \norm{\bs(\bw)\bn}_{H^{s+\frac{1}{2}}(\p\Omega)} \lesssim \norm{\bw}_{\bH^{s+2}(\Omega\setminus \oO)}\lesssim \norm{\bpsi}_{\bH^{s}(\Omega\setminus \oO)}.
  \end{equation}
  Then we define the linear function $\boldsymbol{L}$ on $\bH^s(\Omega\setminus \oO)$ 
  \begin{equation}
    \boldsymbol{L}(\bpsi) : = \inner{\bf,\bs(\bw)\bn}_{\bH^{-s-\frac{1}{2}}(\p\Omega),\bH^{s+\frac{1}{2}}(\p\Omega)} - \inner{\bg,\bw}_{\bH^{-s-\frac{3}{2}}(\p O),\bH^{s+\frac{3}{2}}(\p O)},
  \end{equation}
  where $\bw$ solves \eqref{eq:mixed_ela_0} with the source $\bpsi$. In view of Lemma \ref{lm:mixed_lap_s_positive}, the solution for \eqref{eq:mixed_ela_0} is unique, thus $\boldsymbol{L}$ is well defined. Moreover, we have
  \begin{align}
    \abs{\boldsymbol{L}(\bpsi)} & \leq \norm{\bf}_{\bH^{-s-\frac{1}{2}}(\p\Omega)} \norm{\bs(\bw)\bn}_{\bH^{s+\frac{1}{2}}(\p\Omega)} + \norm{\bg}_{\bH^{-s-\frac{3}{2}}(\p O)}\norm{\bw}_{H^{s+\frac{3}{2}}(\p O)}\\
    &\lesssim (\norm{\bf}_{\bH^{-s-\frac{1}{2}}(\p\Omega)} + \norm{\bg}_{\bH^{-s-\frac{3}{2}}(\p O)})\norm{\bpsi}_{\bH^s(\Omega\setminus \oO)}.
  \end{align}
  Thus $\boldsymbol{L}$ is continuous. Hence, we can write
  \begin{equation}
    \boldsymbol{L}(\cdot) = \inner{\bu, \cdot}_{\bH^{-s}(\Omega\setminus \oO),\bH^{s}(\Omega\setminus \oO)}.
  \end{equation}
  Notice that the Lam\'e system with real coefficients is self-adjoint. Thus for any $\bpsi\in (C_0^\infty(\Omega\setminus \oO))^n$, there holds
  \begin{align}
    &\inner{\div \bs(\bu), \bpsi}_{\boldsymbol{\mathcal{D}}(\Omega\setminus \oO)^\ast, \boldsymbol{\mathcal{D}}(\Omega\setminus \oO)} = \inner{\bu, \div\bs(\bpsi)}_{\boldsymbol{\mathcal{D}}(\Omega\setminus \oO)^\ast, \boldsymbol{\mathcal{D}}(\Omega\setminus \oO)}\\
    & = \inner{\bu, \div\bs(\bpsi)}_{\bH^{-s}(\Omega\setminus \oO),\bH^{s}(\Omega\setminus \oO)}= \boldsymbol{L}(\div\bs(\bpsi)) \\
    &= \inner{\bf,\bs(\bpsi)\bn}_{\bH^{-s-\frac{1}{2}}(\p\Omega),\bH^{s+\frac{1}{2}}(\p\Omega)} - \inner{\bg,\bpsi}_{\bH^{-s-\frac{3}{2}}(\p O),\bH^{s+\frac{3}{2}}(\p O)} = 0.
  \end{align}
  Therefore, $\bu$ is a weak generalized solution to $\div\bs(\bu) = 0$ in $\Omega\setminus \oO$ in the sense of \cite[eq.(10.4.6)]{Roitberg96}. Thanks to \cite[Theorem 10.4.2]{Roitberg96}, we can conclude that $\bu\in \tbH^{-s}(\Omega\setminus \oO)$. And thus $\bu|_{\p \Omega}\in \bH^{-s-\frac{1}{2}}(\p\Omega)$ and $\bs(\bu)\bn|_{\p O}\in \bH^{-s-\frac{3}{2}}(\p O)$. It only remains to verify that $\bu|_{\p \Omega} = \bf$ and $\bs(\bu)\bn|_{\p O} = \bg$. By virtue of Lemma \ref{lm:trace_ela}, for any $\bxi\in H^{s+\frac{3}{2}}(\p O)$ and $\bzeta\in H^{s+\frac{1}{2}}(\p\Omega)$, there exists $\bv_\xi, \bv_\zeta\in \bH^{s+2}(\Omega\setminus \oO)$ such that 
  \begin{equation}
    \bv_\xi|_{\p\Omega} = 0,\ \bv_\xi|_{\p O} = \bxi,\ \bs(\bv_\xi)\n|_{\p\Omega} = 0,\ \bs(\bv_\xi)\bn|_{\p O} = 0,
  \end{equation}
  and 
  \begin{equation}
    \bv_\zeta|_{\p\Omega} = 0,\ \bv_\zeta|_{\p O} = 0,\ \bs(\bv_\zeta)\n|_{\p\Omega} = \bzeta, \ \bs(\bv_\zeta)\bn|_{\p O} = 0.
  \end{equation}
  Using Lemma \ref{lm:Green_ela}, we have 
  \begin{align}
  &\inner{\bg , \bxi}_{\bH^{-s-\frac{3}{2}}(\p O), \bH^{s+\frac{3}{2}}(\p O)} = - \inner{\bu,\div\bs(\bv_\xi)}_{(\bH^s(\Omega\setminus \oO))^\ast, \bH^s(\Omega\setminus \oO)}\\
  & =  \inner{\bs(\bu)\bn, \bv_\xi}_{H^{-s-\frac{3}{2}}(\p O), H^{s+\frac{3}{2}}(\p O)} = \inner{\bs(\bu)\bn, \bxi}_{H^{-s-\frac{3}{2}}(\p O), H^{s+\frac{3}{2}}(\p O)},
  \end{align}
  and 
  \begin{align}
    &\inner{\bf , \bzeta}_{\bH^{-s-\frac{1}{2}}(\p \Omega), \bH^{s+\frac{1}{2}}(\p \Omega)} = - \inner{\bu,\div\bs(\bv_\zeta)}_{(\bH^s(\Omega\setminus \oO))^\ast, \bH^s(\Omega\setminus \oO)}\\
    & =  \inner{\bu , \bs(\bv_\zeta)\n}_{\bH^{-s-\frac{1}{2}}(\p \Omega), \bH^{s+\frac{1}{2}}(\p \Omega)}  =  \inner{\bu , \bzeta}_{\bH^{-s-\frac{1}{2}}(\p \Omega), \bH^{s+\frac{1}{2}}(\p \Omega)}.
  \end{align}
  Then we can conclude that $\bu|_{\p\Omega} = \bf$ in $\bH^{-s-\frac{1}{2}}(\p \Omega)$ and $\bs(\bu)\bn|_{\p O} = \bg$ in $\bH^{-s-\frac{3}{2}}(\p O)$ as $\bxi\in \bH^{s+\frac{3}{2}}(\p O)$ and $\bzeta\in \bH^{s+\frac{1}{2}}(\p\Omega)$ are arbitrary.  
  
  Finally, the uniqueness of $\bu$ follows immediately from the fact that \eqref{eq:mixed_ela_homo} only has trivial solution, which is shown in Lemma \ref{lm:mixed_ela_s_positive}.
\end{proof}
Next, for $\bf\in (C_0^\infty(\cS))^n$, we consider the mixed boundary value problem
\begin{equation}
  \label{eq:mixed_ela_control}
  \begin{cases}
    \div\bs(\bu) = 0,\text{ in }\Omega\setminus \oO,\\
    \bu|_{\p \Omega} = \bf,\ \bs(\bu)\bn|_{\p O} = 0.
  \end{cases}
\end{equation}
Then we define the space
\begin{equation}
  \label{eq:def_bE}
  \boldsymbol{E}: = \{\bv^{\bf}|_{\p O} \mid \bf\in (C_0^\infty(\cS))^n,\ \bv^{\bf} \text{ solves \eqref{eq:mixed_ela_control}}\}.
\end{equation}
We have the following higher order approximate controllability result.
\begin{proposition}
  \label{prop:approximate_control_ela}
  Let $s\geq 0$. The space $\boldsymbol{E}$ defined in \eqref{eq:def_bE} is dense in $\bH^{s+\frac{3}{2}}(\p O)$ with respect to $\bH^{s+\frac{3}{2}}(\p O)$ topology.
\end{proposition}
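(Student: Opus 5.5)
We argue exactly as in the proof of Proposition \ref{prop:approxi_control_lap}, by duality. Suppose $\boldsymbol{E}$ is not dense in $\bH^{s+\frac{3}{2}}(\p O)$. By the Hahn--Banach theorem there is a non-zero $\bh\in \bH^{-s-\frac{3}{2}}(\p O)$ with $\inner{\bh,\bv^{\bf}|_{\p O}}_{\bH^{-s-\frac{3}{2}}(\p O),\bH^{s+\frac{3}{2}}(\p O)}=0$ for all $\bf\in (C^\infty_0(\cS))^n$. We introduce the adjoint state $\bw\in \tbH^{-s}(\Omega\setminus\oO)$ solving
\begin{equation}
  \div\bs(\bw) = 0 \text{ in }\Omega\setminus\oO,\quad \bw|_{\p\Omega}=0,\quad \bs(\bw)\bn|_{\p O} = \bh .
\end{equation}
It exists and is unique by Lemma \ref{lm:mixed_ela_s_negative}.

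Next we apply the Green formula for the Lam\'e system (Lemma \ref{lm:Green_ela}) to the pair $\bw$, $\bv^{\bf}$. Here $\bv^{\bf}$ is smooth up to the boundary by Lemma \ref{lm:mixed_ela_s_positive}, and we use the symmetry of $\bs$ (the operator is self-adjoint with real constant coefficients). Both volume terms vanish. Since $\bw|_{\p\Omega}=0$ and $\bs(\bv^{\bf})\bn|_{\p O}=0$, the boundary terms reduce to
\begin{equation}
  0=\inner{\bs(\bw)\n,\bf}_{\bH^{-s-\frac{1}{2}}(\p\Omega),\bH^{s+\frac{1}{2}}(\p\Omega)}+\inner{\bh,\bv^{\bf}}_{\bH^{-s-\frac{3}{2}}(\p O),\bH^{s+\frac{3}{2}}(\p O)} .
\end{equation}
By the choice of $\bh$ the second term vanishes. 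Hence $\bs(\bw)\n=0$ on $\cS$ in the distributional sense.

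To conclude, choose a smooth $U$ with $O\subset\subset U\subset\subset\Omega$. Interior elliptic regularity for the Lam\'e system shows that $\bw$ is smooth near $\p U$. Boundary regularity for the Dirichlet problem in $\Omega\setminus\overline U$, with smooth data, then gives $\bw\in C^\infty(\overline\Omega\setminus U)$. So $\bw$ has vanishing Cauchy data $\bw=0$, $\bs(\bw)\n=0$ on $\cS$. Extending $\bw$ by zero across $\cS$ into a small exterior collar gives a solution of the Lam\'e system that vanishes on an open set. Unique continuation for the Lam\'e system with constant Lam\'e parameters, $\mu>0$ and $\lambda+2\mu>0$, then gives $\bw\equiv 0$ in the connected set $\Omega\setminus\oO$. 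This can be obtained by reducing to a weakly coupled system via $\Delta\div\bw=0$ and $\mu\Delta\bw=-(\lambda+\mu)\nabla\div\bw$, or by citing a unique continuation result for Lam\'e. Consequently $\bh=\bs(\bw)\bn|_{\p O}=0$, where the traces are understood in $\tbH^{-s}$, which is a contradiction.

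The main obstacle is this final unique continuation step. Unlike the scalar case, we cannot directly invoke \cite[Theorem 17.2.6]{Hormander3}. If the Lam\'e parameters were variable one would need a genuine UCP result for elasticity. In the constant-coefficient case, however, each component of $\bw$ is biharmonic, so it is real-analytic, and vanishing on an open subset propagates through the connected domain. The duality pairing step is routine, provided one checks that the Green formula (Lemma \ref{lm:Green_ela}) applies with $\bw$ only in $\tbH^{-s}$, as was done in the scalar proof.
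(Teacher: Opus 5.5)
Your proposal is correct and follows the paper's proof step for step: Hahn--Banach, the adjoint mixed problem from Lemma \ref{lm:mixed_ela_s_negative}, the Green formula of Lemma \ref{lm:Green_ela} giving $\bs(\bw)\n|_{\cS}=0$, regularity near $\p\Omega$, and unique continuation. The only difference is in the last step: the paper cites unique continuation results for Cauchy data and from open sets for the Lam\'e system \cite{EHMR21,UW09}, which also cover smooth variable $\mu,\lambda$ (the paper calls them positive smooth functions), whereas your zero-extension plus biharmonicity argument is self-contained but, as you note yourself, needs constant Lam\'e parameters.
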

\begin{proof}
  To get a contradiction, we assume that $\boldsymbol{E}$ is not dense in $\bH^{s+\frac{3}{2}}(\p O)$. By virtue of the Hahn-Banach theorem, there must exists a non-trivial $\bh\in \bH^{-s-\frac{3}{2}}(\p O)$ such that 
  \begin{equation}
    \inner{\bh,\bg}_{\bH^{-s-\frac{3}{2}}(\p O),\bH^{s+\frac{3}{2}}(\p O)}, \text{ for all }\bg\in \boldsymbol{E}.
  \end{equation}
  Then we consider the mixed boundary value problem
  \begin{equation}
    \begin{cases}
      \div \bs(\bw) = 0, \text{ in }\Omega\setminus \oO,\\
      \bw|_{\p \Omega} = 0,\ \bs(\bw)\bn|_{\p O} = \bh.
    \end{cases}
  \end{equation}
  In view of Lemma \ref{lm:mixed_ela_s_negative}, above system admits a unique solution $\bw\in \tbH^{-s}(\Omega\setminus\oO)$. By Lemma \ref{lm:Green_ela}, we have
  \begin{align}
    0=&\inner{\div\bs(\bw),\bv^{\bf}}_{(\bH^{s+2}(\Omega\setminus\oO))^\ast, \bH^{s+2}(\Omega\setminus\oO)}  - \inner{\bw,\div \bs(\bv^{\bf})}_{(\bH^{s}(\Omega\setminus\oO))^\ast, \bH^s(\Omega\setminus\oO)} \\
    &=\inner{\bs(\bw)\n, \bv^{\bf}}_{\bH^{-s-\frac{3}{2}}(\p \Omega), \bH^{s+\frac{3}{2}}(\p \Omega)} +\inner{\bs(\bw)\bn, \bv^{\bf}}_{\bH^{-s-\frac{3}{2}}(\p O), \bH^{s+\frac{3}{2}}(\p O)}\\
    & - \inner{\bw,\bs(\bv^{\bf})\n}_{\bH^{-s-\frac{1}{2}}(\p \Omega), \bH^{s+\frac{1}{2}}(\p \Omega)} - \inner{\bw,\bs(\bv^{\bf})\bn}_{\bH^{-s-\frac{1}{2}}(\p O), \bH^{s+\frac{1}{2}}(\p O)}\\
    & =\inner{\bs(\bw)\n, \bf}_{\bH^{-k-\frac{1}{2}}(\p\Omega), \bH^{k+\frac{1}{2}}(\p\Omega)}.
  \end{align}
  Since $\bf\in (C_0^\infty(\cS))^n$ is arbitrary, there holds $\bs(\bw)\n|_{\cS} = 0$ in the distributional sense. According to the interior regularity for the elliptic system (see e.g. \cite[Theorem 8.3]{Giaquinta83}), we have $\bw\in (C^\infty_{\text{loc}}(\Omega\setminus \oO))$. Let $U$ be an open domain with smooth boundary $\p U$ and $O\subset\subset U\subset\subset \Omega$. Then $\bw|_{\p U}\in C^\infty(\p U)$ and $\bw$ satisfies
  \begin{equation}
    \begin{cases}
      \div \bs(\bw) = 0,\text{ in }\Omega\setminus U,\\
      \bw|_{\p U}\in C^\infty(\p U),\ \bw|_{\p \Omega} \in C^\infty(\p \Omega).
    \end{cases}
  \end{equation}
  Using Lemma \ref{lm:mixed_ela_s_positive}, we can conclude that $\bw\in C^\infty(\overline{\Omega}\setminus U)$. Since $\bw|_{\cS} = \sigma(\bw)\n|_{\cS} = 0$, by unique continuation for local Cauchy data of Lam\'e system (see e.g. \cite[Corollary 2.2]{EHMR21}), we have $\bw = 0$ in $\Omega\setminus \overline{U}$. Applying the unique continuation from open subset for Lam\'e system (see e.g. \cite[Theorem 2.3]{UW09}), there holds $\bw = 0$ in $\Omega\setminus \oO$ and thus $\bh = \bs(\bw)\bn|_{\p O} = 0$. This leads to a contradiction. 
\end{proof}
\section{Proof of Theorem \ref{main-Elasticity}}
\label{sec:ela_main}
In this section, we prove Theorem \ref{main-Elasticity} with an approach analogous to Section \ref{sec:lap_main}. In contrast to Lemma \ref{lm:lap_variational}, where a scalar version of Signorini condition restricts the Laplace equation to constant solutions, the Lam\'e system exhibits greater complexity as it lacks such simple trivial solutions. Consequently, we must rely on the variational inequality established in the following lemma.
\begin{lemma}
  \label{lm:ela_variational}
  Let $U$ be open and a set of finite perimeter and $\nu$ be the outward measure-theoretic unit normal vector to $U$. Let $\psi \in L^\infty (\p^\ast U)$.
  Suppose $\bu\in C^1(\overline{U})^n \cap \bH^2(U)$ satisfy $\div\bs(\bu) = 0$ in $U$ and 
  \begin{equation}
    \bs(\bu)_\tau=0,\ \bu_\nu \leq \psi,\ \bs(\bu)_\nu \leq 0,\ (\bu_\nu-\psi)\bs(\bu)_\nu=0,\text{ a.e. on }\p^\ast U.
  \end{equation}
  Then for any $\bv\in K_{\psi,U}$, where 
  \begin{equation}
    \label{eq:def_K}
    K_{\psi,U} : = \{\bv \in C^1(\overline{U})^n \cap \bH^2(U) \mid \bv_{\nu}\leq \psi \text{ a.e. on }\p^\ast U \},
  \end{equation}
  we have 
  \begin{equation}
    \int_{U} \bs(\bu): \be(\bv - \bu) \geq 0.
  \end{equation}
\end{lemma}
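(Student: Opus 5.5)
The plan is to mirror the proof of Lemma \ref{lm:lap_variational}, using the Gauss--Green formula for sets of finite perimeter together with the pointwise Signorini conditions on $\p^\ast U$. Fix $\bv\in K_{\psi,U}$ and set $\bw:=\bv-\bu\in C^1(\overline{U})^n\cap\bH^2(U)$. By the Sobolev extension argument used in Lemma \ref{lm:lap_variational}, we may extend $\bu$ and $\bw$ to a neighbourhood of $\overline U$ in the same regularity class. We then apply the Gauss--Green formula of \cite[Proposition 6.4]{CCT19} to the vector field $\bs(\bu)\bw$, whose components are $\sum_j \sigma_{ij}(\bu)w_j$. Its divergence is $\div\bs(\bu)\cdot\bw+\bs(\bu):\nabla\bw$. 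By the symmetry of $\bs(\bu)$ we have $\bs(\bu):\nabla\bw=\bs(\bu):\be(\bw)$, and $\div\bs(\bu)=0$ in $U$. This gives
\begin{equation}
  \int_U \bs(\bu):\be(\bv-\bu)\,dx=\int_{\p^\ast U}\bs(\bu)\nu\cdot(\bv-\bu)\,d\cH^{n-1}.
\end{equation}

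Next we analyse the boundary integrand. On $\p^\ast U$ we decompose $\bs(\bu)\nu=\bs(\bu)_\nu\nu+\bs(\bu)_\tau$. Since $\bs(\bu)_\tau=0$ a.e., the integrand reduces to $\bs(\bu)_\nu(\bv_\nu-\bu_\nu)$ a.e. The complementarity condition $(\bu_\nu-\psi)\bs(\bu)_\nu=0$ allows us to replace $\bu_\nu$ by $\psi$ in this product, so that
\begin{equation}
  \bs(\bu)_\nu(\bv_\nu-\bu_\nu)=\bs(\bu)_\nu(\bv_\nu-\psi)\quad\text{a.e. on }\p^\ast U.
\end{equation}
Now $\bs(\bu)_\nu\le 0$ and, because $\bv\in K_{\psi,U}$, also $\bv_\nu-\psi\le 0$ a.e. Hence the product is nonnegative a.e., and integrating gives the claimed inequality.

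The main obstacle is justifying the Gauss--Green identity on a finite-perimeter set with only $C^1(\overline U)\cap H^2$ regularity. The traces on $\p^\ast U$ must be understood as restrictions of continuous functions, and $\bs(\bu)\bw$ needs to be a $C^0$ (indeed $W^{1,1}$) vector field with integrable divergence. Since $\bu,\bw\in C^1(\overline U)$, the field $\bs(\bu)\bw$ is continuous up to the boundary with divergence in $L^1$ (products of $L^2$ functions). This is exactly the setting of \cite[Proposition 6.4]{CCT19} as used in Lemma \ref{lm:lap_variational}. A further minor point is that $\psi\in L^\infty$ guarantees every boundary integrand is integrable with respect to $\cH^{n-1}$, since $\cH^{n-1}(\p^\ast U)<\infty$.
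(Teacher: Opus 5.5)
Your proposal is correct and follows the paper's proof. The paper likewise applies a Gauss--Green formula on $\p^\ast U$, drops the tangential stress, and uses complementarity to replace $\bu_\nu$ by $\psi$ (it does this through the split $\{\bu_\nu=\psi\}\cup\{\bu_\nu\neq\psi\}$), then concludes from the sign conditions. The only cosmetic difference is that you apply \cite[Proposition 6.4]{CCT19} directly to the field $\bs(\bu)(\bv-\bu)$, whereas the paper cites the elastic Gauss--Green formula \cite[Lemma 5.2]{HLLOZ25}.
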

\begin{proof}
  We decompose $\p^\ast U = \Gamma_1\cup \Gamma_2$, where $\Gamma_1$ and $\Gamma_2$ are measurable sets defined as
  \begin{equation}
    \Gamma_1 : = \{x\in \p^\ast U \mid \bu_\nu(x) = \psi(x)\},\ \Gamma_2 : = \{x\in \p^\ast U \mid \bu_\nu(x) \neq \psi(x)\}.
  \end{equation}
  Since $(\bu_\nu-\psi) \bs(\bu)_\nu = 0$ and $\bs(\bu)_\nu\leq 0$ a.e. on $\p^\ast U$, we have $\bs(\bu)_\nu = 0$ a.e. on $\Gamma_2$ and $(\bv_\nu - \psi)\bs(\bu)_\nu\geq 0$ a.e. on $\Gamma_1$ for all $\bv\in K_{\psi,U}$. Using the generalized Gauss-Green formula in \cite[Lemma 5.2]{HLLOZ25}, there holds
  \begin{align}
    &\int_U \bs(\bu):\be(\bv - \bu)  = \int_{\p^\ast U} \bs(\bu)\bn \cdot (\bv-\bu)\ d\cH^{n-1} - \int_{U} \div\bs(\bu)\cdot (\bv-\bu)\\
    & = \int_{\p^\ast U} \bs(\bu)_\nu (\bv-\bu)_\nu\ d\cH^{n-1} = \int_{\Gamma_1} \bs(\bu)_\nu (\bv-\psi)_\nu\ d\cH^{n-1}\geq 0.
  \end{align}
\end{proof}
We denote the set of constant $n\times n$ skew-symmetric matrices by $\mathbb{S}^{n}$, and we define the set
\begin{equation}
  \fR : =\{\bf\in (C^\infty(\p\Omega))^n\mid \bf = A\boldsymbol{x}|_{\p\Omega} + \boldsymbol{c}, A\in \mathbb{S}^n, \boldsymbol{c}\in \R^n\}.
\end{equation}
Next we show that the Dirichlet-to-Neumann map $\bLambda$ uniquely determines the shape of the obstacle with inhomogeneous obstacle function.
\begin{theorem}
  \label{thm:5}
  \it Let $\Omega\subset \mathbb{R}^n$ be a bounded connected open set with smooth boundary. Let $O_1,O_2\subset\subset \Omega$ be (possibly empty) open subsets with smooth boundary, and $\varphi_1\in C^\infty(\p O_1)$, $\varphi_2\in C^\infty(\p O_2)$.
  Assume that $\Omega\setminus \overline{O_1},\Omega\setminus \overline{O_2}$ are connected. 
  Let $\bLambda_1$, $\bLambda_2$ be the Dirichlet-to-Neumann map defined in \eqref{eq:def_DN_ela} with obstacles $O_1,\varphi_1$ and $O_2,\varphi_2$, respectively.
  If $\bLambda_1=\bLambda_2$, then $O_1=O_2$.
\end{theorem}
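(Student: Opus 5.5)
Suppose, to get a contradiction, that $O_1\neq O_2$, and without loss of generality $O_1\not\subset O_2$. Define $G$ and $\cV$ exactly as in \eqref{def-G0}--\eqref{def-Vset}. The strategy mirrors the scalar proof, with Lemma \ref{lm:ela_variational} replacing Lemma \ref{lm:lap_variational}.

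\textbf{Step 1: choose a probing datum.} Pick a boundary datum $\bg\in(C_0^\infty(\cS))^n$ with $\bg\notin\fR$, and let $\bv_1^{\bg}$ solve \eqref{eq:mixed_ela_control} with obstacle $O_1$, so that $(\bv_1^{\bg})_{\nu_1}<\varphi_1$ on $\p O_1$. To construct it:
\begin{itemize}
\item choose $\bphi\in (C^\infty(\p O_1))^n$ with $\bphi\cdot\nu_1<\varphi_1-\delta$;
\item apply Proposition \ref{prop:approximate_control_ela} together with the Sobolev embedding to get $\bh\in(C_0^\infty(\cS))^n$ with $\bv_1^{\bh}|_{\p O_1}$ uniformly $\delta/2$-close to $\bphi$;
\item if $\bh\in\fR$, add $\varepsilon\boldsymbol{\eta}$ with $\boldsymbol{\eta}\in(C_0^\infty(\cS))^n\setminus\fR$ and $\varepsilon$ small, controlled by Lemma \ref{lm:mixed_ela_s_negative} and the trace theorem.
\end{itemize}
Note that $\fR$ restricted to the open set $\cS$ with compact support forces zero, so any nonzero $\bg$ already works. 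Since $\bs(\bv_1^{\bg})\bn=0$ on $\p O_1$ and the gap is strict, $\bv_1^{\bg}$ satisfies \eqref{eq:direct_problem_ela}. By uniqueness of the Signorini solution, $\bu_1^{\bg}=\bv_1^{\bg}$.

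\textbf{Step 2: transfer to $\cV$.} From $\bLambda_1=\bLambda_2$ and unique continuation for Lamé Cauchy data (\cite[Corollary 2.2]{EHMR21}, \cite[Theorem 2.3]{UW09}), $\bu_1^{\bg}=\bu_2^{\bg}$ in $G$. Using the regularity results \cite{RS22,Andersson16}, $\bu_i\in C^1\cap\bH^2$. Following the argument of \cite[Theorem 1]{HLLOZ25} together with the normal identification \cite[Proposition 3.3]{HLLOZ25}, on $\p^\ast\cV\setminus\p O_2$ we get
\[
\bs(\bu_2^{\bg})\nu_1=0 \quad\text{and}\quad (\bu_2^{\bg})_{\nu_1}\le\varphi_1 .
\]
The sign flip $\nu_\cV=-\nu_1$ leaves the zero traction unaffected, and the normal-displacement inequality becomes $(\bu_2^{\bg})_{\nu_\cV}\le-\varphi_1$. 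On $\p^\ast\cV\cap\p O_2$, $\bu_2^{\bg}$ satisfies the Signorini conditions with $\varphi_2$. Define $\psi=\varphi_2$ on $\p^\ast\cV\cap\p O_2$ and $\psi=-\varphi_1$ elsewhere on $\p^\ast\cV$; this is well-defined by \cite[Lemma 2.4]{HLLOZ25}. Then $\bu_2^{\bg}$ satisfies the hypotheses of Lemma \ref{lm:ela_variational} on $U=\cV$, with complementarity holding trivially where the traction vanishes.

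\textbf{Step 3: force a rigid motion.} Apply Lemma \ref{lm:ela_variational} with $\bv=\bu_2^{\bg}+\boldsymbol{w}$, where $\boldsymbol{w}_\nu=0$ on $\p^\ast\cV$, and also with $\bv=\bu_2^{\bg}\pm$ such perturbations, to get equality. Better still, test with $\bv=2\bu_2^{\bg}-\psi$-type choices. The simplest route is to take $\bv=\bu_2^{\bg}-t\bu_2^{\bg}+\ldots$, but the cleanest is to use the Gauss–Green computation inside the lemma's proof directly, with $\bv=0$. This requires $0\in K_{\psi,\cV}$, i.e. $\psi\ge0$. To arrange this, I would first reduce by choosing the sign conventions in Step 1 so that on the relevant boundary $\bu_2$ is strictly inside the constraint; where it is not, subtract a smooth field $\boldsymbol{\Phi}$ with $\boldsymbol{\Phi}_\nu=\psi$ (extend via trace on each smooth piece). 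Then
\[
\int_\cV\bs(\bu):\be(\bu)\le\int_\cV\bs(\bu):\be(\boldsymbol{\Phi}).
\]
This alone does not give $\be(\bu)=0$.

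\textbf{Main obstacle.} Unlike the scalar case, there is no constant $M$ trick: rigid motions need not satisfy the constraint. The plan is to exploit the freedom in $\bg$. Choose $\bg$ so that $(\bu_1^{\bg})_{\nu_1}$ stays strictly below $\varphi_1$ by a large margin, say $\bphi\cdot\nu_1=\varphi_1-\delta$ with a large gap, together with a scaling argument. Then on $\p^\ast\cV$ the function $\bu$ is a Signorini solution whose energy is bounded by data. Replacing $\bg$ by $\bg+t\boldsymbol{\eta}$ and using linearity in the free region, the energy must vanish in the limit, forcing $\be(\bu_2^{\bg})=0$ in $\cV$. Hence $\bu_2^{\bg}$ is a rigid motion $A\boldsymbol{x}+\boldsymbol{c}$ on $\cV$. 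Since $\cV$ connects to $\p\Omega$ by \cite[Lemma 2.4]{HLLOZ25}, unique continuation gives $\bg\in\fR$, contradicting the choice of $\bg$. Making this energy-vanishing step rigorous is where I expect the real difficulty. I would first try testing Lemma \ref{lm:ela_variational} with $\bv=\bu\pm\bu$ restricted via the strict inequality on the $\p O_1$ part, and analyse separately the $\p O_2$ part, where contact may occur.
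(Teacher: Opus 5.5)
\textbf{There is a genuine gap in Step 3, and it is more than a matter of rigor.} Your Steps 1 and 2 essentially reproduce the paper's setup: data in the non-contact regime on $\p O_1$, then $\bu_1^{\bg}=\bv_1^{\bg}$, then transfer to $\cV$ through $G$. The trouble is your target in Step 3. The conclusion $\be(\bu_2^{\bg})=0$ for a \emph{single} datum does not follow from the conditions available on $\p^\ast\cV$.

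Here is a concrete example. Let $O_2\subset\subset O_1$ be concentric balls of radii $r_2<r_1$, so that $\cV=O_1\setminus\overline{O_2}$. Take the radial Lam\'e field $\bu=(ar+br^{1-n})\,x/\abs{x}$, with $r=\abs{x}$, $b>0$, and $a$ chosen so that the radial stress vanishes at $r=r_1$. This field has the following properties:
\begin{itemize}
\item it has zero traction on $\p O_1$;
\item on $\p O_2$ it has zero tangential stress and strictly negative normal stress, so it satisfies the Signorini conditions there with $\varphi_2:=\bu_{\nu_2}$;
\item nevertheless $\be(\bu)\neq0$.
\end{itemize}
So no energy inequality built from one solution and its boundary conditions can force rigidity. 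Your perturbation $\bg\mapsto\bg+t\bdeta$ also cannot be exploited through ``linearity'', because $\bg\mapsto\bu_2^{\bg}$ is nonlinear wherever contact on $\p O_2$ occurs. The scalar trick works only because constants are admissible competitors; as you noticed, no rigid motion need be admissible for Lam\'e.

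\textbf{The missing idea is to aim for rigidity of a \emph{difference} of two solutions}, i.e.\ uniqueness of the Signorini problem on $\cV$ up to the kernel $\fR$. Keep both data from your Step 1: $\bg$ and $\bh=\bg+\varepsilon\bdeta$ with $\bdeta\notin\fR$, both chosen so that $(\bv_1)_{\nu_1}<\varphi_1$ on $\p O_1$. Your remark that a nonzero element of $(C_0^\infty(\cS))^n$ lies outside $\fR$ when $\cS\neq\p\Omega$ applies to $\bdeta$ here. Then $\bu_2^{\bg}$ and $\bu_2^{\bh}$ satisfy the same conditions on $\p^\ast\cV$, and each is an admissible competitor for the other. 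Lemma \ref{lm:ela_variational} then gives
\[
\int_\cV\bs(\bu_2^{\bg}):\be(\bu_2^{\bh}-\bu_2^{\bg})\ge0,\qquad \int_\cV\bs(\bu_2^{\bh}):\be(\bu_2^{\bg}-\bu_2^{\bh})\ge0 .
\]
Adding the two inequalities gives $\int_\cV\bs(\bw):\be(\bw)\le0$ for $\bw=\bu_2^{\bh}-\bu_2^{\bg}$. Hence $\be(\bw)=0$ and $\bw=A\boldsymbol{x}+\boldsymbol{c}$ on $\cV$. Unique continuation up to $\p\Omega$ then yields $\varepsilon\bdeta=\bw|_{\p\Omega}\in\fR$, a contradiction. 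This is exactly the paper's argument.

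Finally, fix the sign in Step 2. From $(\bu_2^{\bg})_{\nu_1}\le\varphi_1$ and $\nu_\cV=-\nu_1$ you get $(\bu_2^{\bg})_{\nu_\cV}\ge-\varphi_1$, not $\le$. This is harmless, because the traction vanishes on $\p^\ast\cV\setminus\p O_2$, so that part contributes nothing to the Gauss--Green boundary term. You may simply take $\psi$ there to be a sufficiently large constant. Then both fields lie in $K_{\psi,\cV}$ and satisfy the hypotheses of the lemma.
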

\begin{proof}
  To get a contradiction, we assume that $O_1\not\subset O_2$ without loss of generality. Let $G$ and $\cV$ be the sets defined in \eqref{def-G0} and \eqref{def-Vset}, respectively.
  Let $\bu_i^{\bf}$ solve \eqref{eq:direct_problem_ela} with Dirichlet condition $\bf$ on $\p\Omega$ and obstacle $O_i$, $\varphi_i$, for $i=1,2$, respectively.
  It was known due to \cite[Theorem 2.2]{KD81} and \cite[Theorem 1]{RS22} that $\bu_i^{\bf}\in \bH^2(\Omega\setminus \oO_i)\cap (C^{1,\frac{1}{2}}(\overline{\Omega}\setminus O_i))^n$ for $i=1,2$, respectively.
  Let us denote the outward unit normal to $\Omega\setminus \oO_i$ on $\p O_i$ by $\nu_i$ for $i=1,2$, respectively.
  Let $\nu_\cV$ be the outward measure-theoretic unit normal to $\cV$ on the reduced boundary $\p^\ast \cV$.
  By virtue of \cite[Proposition 3.3]{HLLOZ25}, we have $\nu_\cV = \nu_2$ a.e. on $\p^\ast \cV\cap \p O_2$ and $\nu_\cV = - \nu_1$ a.e. on $\p^\ast \cV\setminus \p O_1$.
  Then the Signorini boundary condition for $\bu_2^{\bf}$ on $\p O_2$ reads
  \begin{equation}
    \label{eq:thm5_eq1}
    \bs(\bu_2^{\bf})_\tau=0,\ (\bu_2^{\bf})_{\nu_2}\leq \varphi_2,\ \bs(\bu_2^{\bf})_{\nu_2}\leq 0,\ ((\bu_2^{\bf})_{\nu_2}-\varphi_2)\bs(\bu_2^{\bf})_{\nu_2} =0
  \end{equation}  
  a.e. on $\p^\ast \cV\cap \p O_2$ for all $f\in C^\infty_0(\cS)$. Let $\bv^{\bf}_1$ solve
  \begin{equation}
    \begin{cases}
    \div\bs(\bv) = 0,\text{ in }\Omega\setminus \oO_1,\\
    \bv|_{\p \Omega} = \bf,\  \bs(\bv)\nu_1|_{\p O_1} = 0.
    \end{cases}
  \end{equation}
  Let $\delta>0$ and $M$ be a constant such that $M>\varphi_1+\delta$. According to Proposition \ref{prop:approximate_control_ela} and the Sobolev embedding theorem, there exists $\bg\in C^\infty_0(\cS)^n$ such that 
  \begin{equation}
    \norm{\bv^{\bg}_1 - M \nu_1}_{C^0(\p O)^n}\leq \frac{\delta}{2}.
  \end{equation}
  Thus 
  \begin{equation}
    \label{eq:thm5_1}
    (\bv^{\bg}_1)_{\nu_1} = \bv^{\bg}_1 \cdot \nu_1 = M + (\bv^{\bg}\cdot \nu_1 - M )\geq M-\frac{\delta}{2} > \varphi_1.
  \end{equation}
  Let $\bdeta\in C_0^\infty(\cS)$ such that $\bdeta\neq \fR$. By virtue of Lemma \ref{lm:mixed_ela_s_positive} and the trace theorem, there exists a small enough $\varepsilon>0$ such that
  \begin{equation}
    \norm{\bv^{\varepsilon \bdeta}_1}_{C^0(\p O)^n} = \varepsilon\norm{\bv^{\bdeta}_1}_{C^0(\p O)^n}\leq \frac{\delta}{2}.
  \end{equation}
  Write $\bh : = \bg+\varepsilon\bdeta$, then we have 
  \begin{equation}
    \label{eq:thm5_2}
    (\bv^{\bh}_1)_{\nu_1} = \bv^{\bg}_1 \cdot \nu_1 + \varepsilon\bv^{\bdeta}_1\cdot \nu_1\geq M - \delta >\varphi_1.
  \end{equation}
  Notice that for any $\bf\in C^\infty(\p \cS)^n$, there holds 
  \begin{equation}
    \label{eq:thm5_3}
    \bs(\bv_1^{\bf})_{\nu_1} = \bs(\bv_1^{\bf})\nu_1\cdot \nu_1 = 0,\ \bs(\bv_1^{\bf})_\tau = \bs(\bv^{\bf}_1)\nu_1 - \bs(\bv_1^{\bf})_{\nu_1} \nu_1 = 0.
  \end{equation}
  Hence \eqref{eq:thm5_1} and \eqref{eq:thm5_2} imply that $\bv_1^{\bg}$, $\bv^{\bh}_1$ solve \eqref{eq:direct_problem_ela} with the obstacle $O_1$, $\varphi_1$ and the Dirichlet boundary condition $\bg$, $\bh$, respectively. Analogous to the argument used in the proof of \cite[Theorem 2]{HLLOZ25}, due to the uniqueness for the solution to \eqref{eq:direct_problem_ela}, we can conclude that $\bv^{\bg}_1 = \bu_1^{\bg}$ and $\bv^{\bh}_1 = \bu_1^{\bh}$. Since $\bLambda_1 \bg = \bLambda_1 \bg$ and $\bLambda_1 \bh = \bLambda_1 \bh$, there holds $\bu_1^{\bg} = \bu_1^{\bg}$ and $\bu_1^{\bh} = \bu_1^{\bh}$ in $G$. Moreover, we have 
  \begin{equation}
    \label{eq:thm5_eq2}
    \bu_2^{\bg}=\bu_1^{\bg} >\varphi_1,\ \bs(\bu_2^{\bg})_{\tau} = \bs(\bu_1^{\bg})_{\tau} = 0,\ \bs(\bu_2^{\bg})_{\nu_1} = \bs(\bu_1^{\bg})_{\nu_1} = 0, 
  \end{equation}
  and 
  \begin{equation}
    \label{eq:thm5_eq3}
    \bu_2^{\bh}=\bu_1^{\bh} >\varphi_1,\ \bs(\bu_2^{\bh})_{\tau} = \bs(\bu_1^{\bh})_{\tau} = 0,\ \bs(\bu_2^{\bh})_{\nu_1} = \bs(\bu_1^{\bh})_{\nu_1} = 0
  \end{equation}
  a.e. on $\p^\ast \cV \setminus \p O_2$. Next we define a function $\psi \in L^\infty(\p^\ast \cV)$ as
  \begin{equation}
    \psi(x) : = 
    \begin{cases}
      \varphi_2(x),\text{ if }x\in \p^\ast \cV\cap \p O_2,\\
      \varphi_1(x),\text{ if }x\in \p^\ast \cV\setminus \p O_2.
    \end{cases}
  \end{equation}
  And we note that $\psi$ is well-defined on $\p^\ast \cV$ in view of \cite[Lemma 2.4]{HLLOZ25}. Now combining \eqref{eq:thm5_eq1}, \eqref{eq:thm5_eq2} and \eqref{eq:thm5_eq3} we have
  \begin{equation}
    \bs(\bu_2^{\bg})_\tau=0,\ (\bu_2^{\bg})_{\nu_{\cV}} \leq \psi,\ \bs(\bu_2^{\bg})_{\nu_{\cV}} \leq 0,\ ((\bu_2^{\bg})_{\nu_{\cV}}-\psi)\bs(\bu_2^{\bg})_{\nu_{\cV}}=0
  \end{equation}
  and 
  \begin{equation}
    \bs(\bu_2^{\bh})_\tau=0,\ (\bu_2^{\bh})_{\nu_{\cV}} \leq \psi,\ \bs(\bu_2^{\bh})_{\nu_{\cV}} \leq 0,\ ((\bu_2^{\bh})_{\nu_{\cV}}-\psi)\bs(\bu_2^{\bh})_{\nu_{\cV}}=0,
  \end{equation}
  a.e. on $\p^\ast \cV$. Thus $\bu_2^{\bg},\bu_2^{\bh} \in K_{\psi,\cV}$ where $K_{\psi,\cV}$ is defined in \eqref{eq:def_K}. Applying Lemma \ref{lm:ela_variational} for both $\bu_2^{\bg}$ and $\bu_2^{\bh}$ yields
  \begin{equation}
    \label{eq:thm5_eq4}
    \int_{\cV} \bs(\bu_2^{\bg}) : \be(\bu_2^{\bh} - \bu_2^{\bg}) = \int_{\cV} 2\mu \be(\bu_2^{\bg}) : \be(\bu_2^{\bh} - \bu_2^{\bg}) +\lambda \tr(\be(\bu_2^{\bg}))\tr(\be(\bu_2^{\bh} - \bu_2^{\bg}))\geq 0,
  \end{equation}
  and 
  \begin{align}
    \label{eq:thm5_eq5}
    &- \int_{\cV} \bs(\bu_2^{\bh}) : \be(\bu_2^{\bg} - \bu_2^{\bh}) = \int_{\cV} \bs(\bu_2^{\bh}) : \be(\bu_2^{\bh} - \bu_2^{\bg})\\
    & = \int_{\cV} 2\mu \be(\bu_2^{\bh}) : \be(\bu_2^{\bh} - \bu_2^{\bg}) +\lambda \tr(\be(\bu_2^{\bh}))\tr(\be(\bu_2^{\bh} - \bu_2^{\bg}))\leq 0
  \end{align}
  Then \eqref{eq:thm5_eq5} minus \eqref{eq:thm5_eq4} gives
  \begin{align}
    \int_{\cV} 2\mu \be(\bu_2^{\bh}-\bu_2^{\bg}) : \be(\bu_2^{\bh} - \bu_2^{\bg}) +\lambda \tr(\be(\bu_2^{\bh}-\bu_2^{\bg}))^2 \leq 0.
  \end{align}
  Since coefficients $\mu$ and $\lambda$ are positive, we have $\be(\bu_2^{\bh}-\bu_2^{\bg}) = 0$ in $\cV$. In view of \cite[Lemma A.3]{HLLOZ25}, there holds $\bu_2^{\bh}-\bu_2^{\bg} = A\boldsymbol{x}+ \boldsymbol{c}$ in $\cV$ where $A\in \mathbb{S}^n$ and $\boldsymbol{c}\in \R^n$. By \cite[Lemma 2.4]{HLLOZ25}, $\cV$ is connected to $\p \Omega$. Using the unique continuation for $\bu_2^{\bh}-\bu_2^{\bg}$, we have 
  \begin{equation}
    \bu_2^{\bh}|_{\p\Omega}-\bu_2^{\bg}|_{\p\Omega}= (A\boldsymbol{x}+ \boldsymbol{c})|_{\p\Omega} = \bh-\bg = \varepsilon\bdeta \in \fR.
  \end{equation}
  This leads to a contradiction.
\end{proof}
In view of Theorem \ref{thm:5}, we can write $O = O_1 = O_2$ provided that $\bLambda_1 = \bLambda_2$ and thus the obstacle functions $\varphi_1$ and $\varphi_2$ are defined on the same domain. Next we show that the obstacle function can also be determined from the Dirichlet-to-Neumann map $\bLambda$.
\begin{theorem}
  \it Let $\Omega\subset \mathbb{R}^n$ be a bounded connected open set with smooth boundary. Let $O\subset\subset \Omega$ be a non-empty open subsets with smooth boundary, and $\varphi_1,\varphi_2\in C^\infty(\p O)$.
  Assume that $\Omega\setminus \overline{O}$ is connected. 
  Let $\bLambda_1$, $\bLambda_2$ be the Dirichlet-to-Neumann map defined in \eqref{eq:def_DN_ela} with obstacle functions $\varphi_1$, $\varphi_2$, respectively.
  If $\bLambda_1=\bLambda_2$, then $\varphi_1=\varphi_2$.
\end{theorem}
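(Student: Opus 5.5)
We argue by contradiction, following the scalar proof of the corresponding theorem in Section \ref{sec:lap_main}. Suppose $\varphi_1(x)\neq\varphi_2(x)$ for some $x\in\p O$. By continuity there is $r>0$ such that $\varphi_1\neq\varphi_2$ on $\Gamma_r:=B(x,r)\cap\p O$.

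\textbf{Step 1: the contact set avoids $\Gamma_r$.} Since $\bLambda_1=\bLambda_2$, unique continuation for local Cauchy data of the Lam\'e system (\cite[Corollary 2.2]{EHMR21} and \cite[Theorem 2.3]{UW09}) gives $\bu_1^{\bf}=\bu_2^{\bf}$ in $\Omega\setminus\oO$ for all $\bf\in(C_0^\infty(\cS))^n$. Comparing the complementarity conditions $((\bu^{\bf}_2)_\nu-\varphi_i)\bs(\bu^{\bf}_2)_\nu=0$ for $i=1,2$ yields $(\varphi_2-\varphi_1)\bs(\bu_2^{\bf})_\nu=0$ a.e. on $\Gamma_r$. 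Hence $\bs(\bu_2^{\bf})_\nu=0$ on $\Gamma_r$, using the $C^{1,\frac12}$ regularity from \cite{RS22}. Together with $\bs(\bu_2^{\bf})_\tau=0$, this shows that $\bs(\bu_2^{\bf})\bn=0$ on $\Gamma_r$. Therefore $\bu_2^{\bf}$ solves the mixed problem with three parts: Dirichlet data $\bf$ on $\p\Omega$, the traction-free condition on $\Gamma_r$, and the Signorini condition with obstacle $\varphi_2$ on $\p O\setminus\Gamma_r$.

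\textbf{Step 2: uniqueness for the mixed problem.} This is the vectorial analogue of Lemma \ref{lm:Sig_mix_lap}. We lift the Dirichlet data by some $\boldsymbol\phi\in\bH^2$ with $\boldsymbol\phi|_{\p\Omega}=\bf$ and vanishing trace and traction on $\p O$. We then pose the variational inequality on the closed convex set
\[
\{\bv\in\bH^1(\Omega\setminus\oO)\mid \bv|_{\p\Omega}=0,\ \bv_\nu\le\varphi_2-\boldsymbol\phi_\nu\text{ on }\p O\setminus\Gamma_r\},
\]
using the bilinear form $A(\cdot,\cdot)$. Coercivity follows from Korn's inequality on $\bV$, as in the proof of Lemma \ref{lm:mixed_ela_s_positive}, and Stampacchia's theorem gives existence and uniqueness. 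We also need that any sufficiently regular ($C^1\cap\bH^2$) solution of the strong mixed problem satisfies this variational inequality. This follows from a Green's formula argument as in Lemma \ref{lm:ela_variational}: the boundary terms on $\Gamma_r$ vanish, and on the Signorini part only $\bs_\nu(\bv-\bu)_\nu\ge 0$ survives.

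\textbf{Step 3: the controllability construction.} Choose $\phi\in C_0^\infty(\Gamma_r)$ with $\phi\ge0$ and $\phi(y)>0$ at some $y\in\Gamma_r$. Fix $\delta>0$ and a constant $M>\sup\varphi_2+\delta$. Pick $C$ large enough that $\rho:=M-C\phi$ satisfies $\rho(y)<\varphi_2(y)-\delta$. By Proposition \ref{prop:approximate_control_ela} and Sobolev embedding, there is $\bf\in(C_0^\infty(\cS))^n$ such that the solution $\bv^{\bf}$ of \eqref{eq:mixed_ela_control} satisfies
\[
\|\bv^{\bf}|_{\p O}-\rho\bn\|_{C^0(\p O)^n}\le\delta .
\]
Since $\bs(\bv^{\bf})\bn=0$ on all of $\p O$, both $\bs_\tau$ and $\bs_\nu$ vanish. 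Off $\Gamma_r$ we have $\bv^{\bf}_\nu\ge M-\delta>\varphi_2$, so the Signorini condition holds there trivially. Thus $\bv^{\bf}$ solves the mixed problem of Step 1, and by Step 2, $\bv^{\bf}=\bu_2^{\bf}$. This is impossible because
\[
(\bu_2^{\bf})_\nu(y)=\bv^{\bf}_\nu(y)\le\rho(y)+\delta<\varphi_2(y)+\dots
\]
Here the inequality must be reversed. In the elastic setting the constraint is $\bu_\nu\le\varphi$, so all signs flip relative to the scalar case. Accordingly we take $M<\inf\varphi_2-\delta$ and $\rho:=M+C\phi$ with $\rho(y)>\varphi_2(y)+\delta$. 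This gives $(\bu_2^{\bf})_\nu(y)\ge\rho(y)-\delta>\varphi_2(y)$, which contradicts $(\bu_2^{\bf})_\nu\le\varphi_2$.

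\textbf{Main obstacle.} The hardest step is Step 2. We must ensure that the variational formulation with the three-part boundary (Dirichlet on $\p\Omega$, traction-free on $\Gamma_r$, Signorini off $\Gamma_r$) has a unique solution, and that both $\bu_2^{\bf}$ and $\bv^{\bf}$ are admissible solutions of it. For this we plan to use Korn's inequality with the Dirichlet condition on $\p\Omega$, together with the $C^{1,\frac12}\cap\bH^2$ regularity of $\bu_2^{\bf}$ and the $\bH^{s+2}$ regularity of $\bv^{\bf}$ from Lemma \ref{lm:mixed_ela_s_positive}, to justify the Green's formula.
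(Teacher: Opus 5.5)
Your proposal is correct and follows the paper's proof essentially step for step. You localize to where $\varphi_1\neq\varphi_2$, use $\bu_1^{\bf}=\bu_2^{\bf}$ and the complementarity conditions to get $\bs(\bu_2^{\bf})\bn=0$ there, and then apply Proposition \ref{prop:approximate_control_ela} to build a traction-free solution whose normal displacement exceeds $\varphi_2$ at $y$ but stays below $\varphi_2$ elsewhere, concluding by uniqueness for the three-part mixed problem.

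The only real difference is how you get that uniqueness. You prove it directly: Green's formula shows both candidates satisfy the variational inequality, and monotonicity plus Korn's inequality finishes. The paper instead cites \cite[Theorem 5.3]{sofonea2012}, so your version is slightly more self-contained. When writing it up, delete the false start in Step 3 and keep only the sign-corrected construction with $M<\inf\varphi_2-\delta$ and $\rho=M+C\phi$.
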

\begin{proof}
  To get a contradiction, we assume that $\varphi_1 \neq \varphi_2$, then the open set 
  \begin{equation}
    \Gamma_s : = \{x\in \p O\mid \varphi_1(x)\neq \varphi_2(x)\}
  \end{equation}
  is non-empty,
  Since $\bLambda_1 = \bLambda_2$, for any $\bf\in (C_0^\infty(\cS))^n$, we have $\bu_1^{\bf} = \bu_2^{\bf}$ in $\Omega \setminus \oO$ by unique continuation. Therefore, we have 
  \begin{equation}
    ((\bu_2^{\bf})_\nu-\varphi_1)(\bs(\bu_2^{\bf}))_\nu = ((\bu_1^{\bf})_\nu-\varphi_1)(\bs(\bu_1^{\bf}))_\nu = 0 = ((\bu_2^{\bf})_\nu-\varphi_2)(\bs(\bu_2^{\bf}))_\nu
  \end{equation}
  on $\Gamma_s$ for all $\bf\in (C_0^\infty(\cS))^n$.
  This implies that 
  \begin{equation}
    (\varphi_2 - \varphi_1)(\bs(\bu_2^{\bf}))_\nu = 0, ,\text{ on }\Gamma_s.
  \end{equation}
  Since $\varphi_1\neq \varphi_1$ on $\Gamma_r$ and $\bu_2^{\bf}\in (C^1(\overline{\Omega}\setminus O))^n$, we can deduce that 
  \begin{equation}
    (\bs(\bu_2^{\bf}))_\nu = 0 \text{ on }\Gamma_s
  \end{equation} 
  for all $\bf\in (C_0^\infty(\cS))^n$. Moreover, the Signorini boundary condition for $\bu_2^{\bf}$ reads $(\bs(\bu_2^{\bf}))_\tau = 0$ on $\Gamma_s\subset \p O$. Consequently, we have 
  \begin{equation}
    \bs(\bu_2^{\bf})\bn = (\bs(\bu_2^{\bf}))_\tau + (\bs(\bu_2^{\bf}))_\nu \bn = 0
  \end{equation}
  on $\Gamma_s$. Thus $\bu_2^{\bf}$ solves 
  \begin{equation}
    \label{eq:thm6_1}
  \begin{cases}
  \hfil  \div\bs(\bu) = 0,\text{ in }\Omega\setminus \oO, \\ 
  \hfil  \bu|_{\p\Omega} = \bf,\ \bs(\bu)\bn|_{\Gamma_s} = 0,\\
  \hfil  \bs(\bu)_\tau=0,\ \bu_\nu \leq \varphi,\ \bs(\bu)_\nu \leq 0,\ (\bu_\nu-\varphi)\bs(\bu)_\nu=0,\text{ on }\p O\setminus \Gamma_s. 
  \end{cases}
\end{equation}
In view of \cite[Theorem 5.3]{sofonea2012}, the system \eqref{eq:thm6_1} admits a unique weak solution in $\bu\in\bH^1(\Omega\setminus \oO)$.
Let $\bphi\in C_0^\infty(\Gamma_s)^n$ such that there is a point $y\in \Gamma_s$ satisfying $\bphi(y)\cdot \bn(y)>0$. Let $\delta>0$ and $\bpsi\in C^\infty(\p O)$ such that $\bpsi\cdot \bn<\varphi_2-\delta$. Then we can find a large enough constant $C\in \R$, such that for $\brho: = \bpsi + C \bphi$, there holds
\begin{equation}
  \brho(y) \cdot \bn = \bpsi(y)\cdot \bn + C \bphi(y)\cdot \bn>\varphi_2(y) + \delta.
\end{equation}
By virtue of Proposition \ref{prop:approximate_control_ela}, there exists $\bf\in (C_0^\infty(\cS))^n$ such that $\norm{\bv^{\bf} - \brho}_{C^0(\p O)^n}\leq \delta$, where $\bv^{\bf}$ solves \eqref{eq:mixed_ela_0}. Notice that $\bs(\bv^{\bf})\bn = 0$ on $\p O$ and
\begin{equation}
  \bv^{\bf}\cdot \bn \leq \brho\cdot \bn - \delta = \bpsi \cdot \bn + \delta < \varphi_2,\text{ on }\p O\setminus \Gamma_s
\end{equation}
since $\bphi = 0$ in $\p O\setminus \Gamma_s$.
Thus $\bv^{\bf}$ also solves \eqref{eq:thm6_1}. By the uniqueness of the solution to \eqref{eq:thm6_1}, we have $\bv^{\bf} = \bu_2^{\bf}$. Hence, there holds
\begin{equation}
  \bu_2^{\bf}(y) \cdot \bn = \bv^{\bf}\cdot \bn \geq \brho(y)\cdot \bn -\delta > \varphi_2(y).
\end{equation}
This leads to a contradiction to the fact that $\bu_2^{\bf}$ satisfies the Signorini boundary condition on $\p O$.
\end{proof}
\section{The Insufficiency of Boundary Measurements: counterexamples}
\label{sec:examples}
It is shown in \cite{HLLOZ25} that a single boundary measurement $\Lambda(f)|_{\cR}$ uniquely determines the shape of a homogeneous Signorini obstacle provided that $f$ is not a constant function. Our first example stands in contrast to this result and demonstrates that an inhomogeneous obstacle fundamentally complicates the inverse problem. 
For simplicity, we only present examples for the scalar case; their elastic counterparts can be constructed analogously. To preclude the insufficiencies arising from partial data, we shall assume that $\cS = \cR = \p \Omega$ throughout this section.
\begin{example}
  \label{example_lap_1}
  Let $z\in \Omega$ such that $\Omega$ is not a ball centered at $z$. 
  Let $u_{z,n}$ be the fundamental solution to Laplacian in $\R^n$. Thats is,
  \begin{equation}
    u_{z,n}(x): = \begin{cases}
      -\frac{1}{2\pi}\log \abs{x-z},\text{ if } n = 2,\\
      \frac{1}{n(n-2)\alpha_n}\frac{1}{\abs{x-z}^{n-2}},\text{ if } n > 2,
    \end{cases}
  \end{equation}
  where $\alpha_n$ is the volume of the unit ball in $\R^n$. Notice that $\nabla u_{z,n}(x) = \frac{-1}{n\alpha_n}\frac{x-z}{\abs{x-z}^{n}}$ and $\Delta u_{z,n} = 0$ in $\R^n\setminus \{z\}$. Let $O\subset \R^n$ be any open convex set such that $z\in O$. Then for any $y\in \p O$, the hyperplane that tangential to $\p O$ at $y$ reads
  \begin{equation}
    P : = \{x\in \R^n \mid x\cdot \bn(y) + b = 0\},
  \end{equation}
  where $\bn(y)$ is the unit outward normal to $\p O$ at $y$.
  Due to the convexity of $O$, we have $x\cdot \bn(y) + b < 0$ for all $x\in O$. In particular, we have $z\cdot \bn(y) + b<0$ since $z\in \Omega$, and thus $(y-z)\cdot\bn(y)>0$. Since the outward normal to $\Omega\setminus\oO$ and $O$ on $\p O$ possess opposite directions, there holds
  \begin{equation}
    \p_{\nu} u_{z,n}(y) = - \nabla u_{z,n}(y) \cdot \bn(y) = \frac{1}{n\alpha_n}\frac{(y-z)\cdot \bn(y)}{\abs{y}^{n}}> 0.
  \end{equation}
  Since $y\in \p O$ is arbitrary, we have $\p_{\nu} u_{z,n} >0$ on $\p O$.
  Therefore, for any convex open set $O\subset \subset \Omega$ such that $z\in O$, $u_{z,n}$ solves \eqref{eq:direct_problem_lap} with $f = u_{z,n}|_{\p\Omega}$ and $\varphi = u_{z,n}|_{\p O}$. Since $\Omega$ is not a ball centered at $z$, $f$ is not a constant function on $\p\Omega$. 
  Therefore the single measurement $\p_{\n} u^f|_{\p\Omega}$ is not able to uniquely determine the shape of obstacle $O$.   
\end{example}
Classical inverse obstacle problems typically prescribe fixed boundary conditions on the surface of scatterer. For example, the mathematical formulation for a sound-soft obstacle with an inhomogeneous Dirichlet condition is given by:
\begin{equation}
\begin{cases}
\Delta u^f = 0, \text{ in } \Omega \setminus \oO,\\
u^f|_{\p \Omega} = f,\ u^f|_{\p O} = \varphi.
\end{cases}
\end{equation}
It is a well known (see, e.g., \cite[Theorem 2]{CK18}) that the shape of obstacle $O$ is uniquely determined by a single pair of Cauchy data $(f, \p_{\n} u^f|_{\p\Omega})$. Once the shape surface $\p O$ is known, the inhomogeneous obstacle profile $\varphi$ is readily recovered via unique continuation.
In contrast, the following example illustrates that restricting the Dirichlet-to-Neumann map $\Lambda$ to a bounded set of boundary source is strictly insufficient to determine the obstacle function $\varphi$ in our setting, primarily because the active contact zone may be empty. Consequently, $\varphi$ remains indeterminate, even when provided with infinitely many linearly independent measurement pairs $(f, \Lambda f)$.
\begin{example}
  \label{example:lap_2}
  For any fixed $N>0$ and $s> \frac{n - 1}{2}$. Suppose that $\Lambda_1 f = \Lambda_2 f$ whenever $\norm{f}_{H^{s}(\p \Omega)} \leq N$. Let $v^f$ solve the mixed boundary value problem \eqref{eq:mixed_lap_0} with the Dirichlet boundary condition $f$ on $\p\Omega$. According to Lemma \ref{lm:mixed_lap_s_positive} and the Sobolev embedding theorem, there holds $\norm{v^f}|_{C^0(\p O)}\leq C N$ for some constant $C$. Therefore, $v^f$ also solves \eqref{eq:direct_problem_lap} associated with the obstacle function $\varphi_1,\varphi_2$ provided that $\varphi_1, \varphi_2 < - C N$. And we have $\Lambda_1 f = \p_{\n} v^f|_{\p \Omega} = \Lambda_2 f$ due to the uniqueness of the solution to \eqref{eq:direct_problem_lap}.
\end{example}  
\nopagebreak

\section{Appendix: Auxiliary Lemmas}
Let $U\subset \R^n$ be a bounded domain with smooth boundary $\p U$, then we have the following lemmas.
\begin{lemma}
  \label{lm:Gauss_Green_lap}
  Let $s\geq 0$ and $u\in \tH^{-s}(U)$, $v\in H^{s+2}(U)$, then there holds
  \begin{align}
    \label{eq:Green_lap}
    &\inner{\Delta u,v}_{(H^{s+2}(U))^\ast, H^{s+2}(U)}  - \inner{u,\Delta v}_{(H^{s}(U))^\ast, H^s(U)} \\
    &=\inner{\p_{\bn} u, v}_{H^{-s-\frac{3}{2}}(\p U), H^{s+\frac{3}{2}}(\p U)} - \inner{u,\p_{\bn} v}_{H^{-s-\frac{1}{2}}(\p U), H^{s+\frac{1}{2}}(\p U)}.
  \end{align}
\end{lemma}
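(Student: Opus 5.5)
We establish the generalized Green identity \eqref{eq:Green_lap} by density, starting from smooth functions and passing to the limit in the $\trinorm{\cdot}_{-s}$ topology that defines $\tH^{-s}(U)$.

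\emph{Step 1: smooth case.} For $u\in C^\infty(\overline{U})$ and $v\in H^{s+2}(U)$, the classical second Green formula gives
\begin{equation*}
\int_U (\Delta u\, v - u\,\Delta v)\,dx = \int_{\p U} (\p_{\bn} u\, v - u\,\p_{\bn} v)\, d\cH^{n-1}.
\end{equation*}
This holds first for $v\in C^\infty(\overline{U})$. It then extends to $v\in H^{s+2}(U)$ by density of $C^\infty(\overline{U})$ in $H^{s+2}(U)$ together with the trace theorem, since $s+2\ge 2$ makes $v|_{\p U}$ and $\p_{\bn}v|_{\p U}$ continuous in $v$ in $L^2(\p U)$. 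In this smooth case every integral coincides with the corresponding duality pairing in \eqref{eq:Green_lap}, so the identity holds.

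\emph{Step 2: continuity of each term with respect to $u$.} Fix $v\in H^{s+2}(U)$. The four terms are bounded as follows.
\begin{itemize}
\item $\abs{\inner{u,\Delta v}}\le \norm{u}_{(H^s(U))^\ast}\norm{\Delta v}_{H^s(U)}$.
\item $\abs{\inner{u,\p_{\bn} v}}\le \norm{u}_{H^{-s-\frac12}(\p U)}\norm{\p_{\bn}v}_{H^{s+\frac12}(\p U)}$.
\item $\abs{\inner{\p_{\bn}u,v}}\le \norm{\p_{\bn}u}_{H^{-s-\frac32}(\p U)}\norm{v}_{H^{s+\frac32}(\p U)}$.
\item $\abs{\inner{\Delta u,v}}\le \norm{\Delta u}_{(H^{s+2}(U))^\ast}\norm{v}_{H^{s+2}(U)}$.
\end{itemize}
The trace theorem controls the boundary norms of $v$ by $\norm{v}_{H^{s+2}(U)}$. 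The first three bounds are by $\trinorm{u}_{-s}$ directly from its definition, and the fourth is bounded by $\trinorm{u}_{-s}$ through the norm equivalence \eqref{eq:norm_equivalence}.

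Hence both sides of \eqref{eq:Green_lap} are continuous linear functionals of $u$ on $(C^\infty(\overline{U}),\trinorm{\cdot}_{-s})$. They therefore extend uniquely to the completion $\tH^{-s}(U)$, and since they agree on the dense subspace $C^\infty(\overline{U})$, they agree on all of $\tH^{-s}(U)$.

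\emph{Main obstacle: consistency of the definitions.} The delicate point is that the objects $u|_{\p U}$, $\p_{\bn}u|_{\p U}$, $\Delta u$ for $u\in\tH^{-s}(U)$ must be defined exactly as these continuous extensions from smooth functions. That is, the trace maps extend by continuity to $\tH^{-s}(U)$, and the extended $\Delta u$ must agree with the distributional Laplacian used in Lemma \ref{lm:well_pose_mix_lap}. I would justify this via \eqref{eq:norm_equivalence} and \cite[Theorem 6.1.1]{Roitberg96}: the map $u\mapsto(u,\Delta u)$ is continuous into $(H^s(U))^\ast\times(H^{s+2}(U))^\ast$, and the extended $\Delta u$ restricted to $\mathcal D(U)$ is the distributional Laplacian.
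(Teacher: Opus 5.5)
Your proposal is correct and follows essentially the same route as the paper: approximate by smooth functions, apply the classical Green formula, and pass to the limit using the definition of $\trinorm{\cdot}_{-s}$, the trace theorem for $H^{s+2}(U)$, and the norm equivalence \eqref{eq:norm_equivalence} for the $\Delta u$ term. The only differences are organizational. You first extend in $v$ for smooth $u$ and then extend in $u$ by continuity, whereas the paper takes simultaneous sequences $\phi_n\to u$ in $\tH^{-s}(U)$ and $\psi_n\to v$ in $H^{s+2}(U)$; your remark on the consistency of the extended $\Delta u$ makes explicit an assumption the paper uses implicitly.
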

\begin{proof}
  By definition, $C^\infty(\overline{U})$ is dense in $\tH^{-s}(U)$ with respect to the norm $\trinorm{\cdot}_{-s}$. Then we can find sequences $\{\phi_n\}_{n=1}^\infty$ and $\{\psi_n\}_{n=1}^\infty$ in $C^\infty(\overline{U})$ such that as $n\to \infty$, $\phi_n \to u$ in $\tH^{-s}(U)$ and $\psi_n \to v$ in $H^{s+2}(U)$. According to \eqref{eq:norm_equivalence}, there holds
  \begin{equation}
    \lim_{n\to\infty}\norm{\Delta u - \Delta \phi_n}_{(H^{s+2}(U))^\ast} = 0.
  \end{equation}
  Therefore, we have 
  \begin{align}
    \label{eq:Green_lap_1}
    &\lim_{n\to \infty}\int_U   \Delta \phi_n \psi_n - \phi_n \Delta \psi_n\, dx \\
    & = \lim_{n\to \infty}  \inner{\Delta \phi_n,\psi_n}_{(H^{s+2}(U))^\ast, H^{s+2}(U)} - \lim_{n\to \infty} \inner{\phi_n,\Delta \psi_n}_{(H^{s}(U))^\ast, H^{s}(U)}\\
    & = \inner{\Delta u,v}_{(H^{s+2}(U))^\ast, H^{s+2}(U)}  - \inner{u,\Delta v}_{H^{-s}(U), H^s_0(U)}.
  \end{align}
  By virtue of the trace theorem for $H^{s+2}(U)$ (see e.g. \cite[Theorem 1.5.1.2]{Grisvard_Ell}), there holds
  \begin{equation}
    \lim_{n\to\infty}\norm{v-\psi_n}_{H^{s+\frac{3}{2}}(\p U)} = 0,\ \lim_{n\to\infty}\norm{\p_{\bn} v-\p_{\bn} \psi_n}_{H^{s+\frac{1}{2}}(\p U)} = 0.
  \end{equation}
  Then we can obtain
  \begin{align}
    \label{eq:Green_lap_2}
    &\inner{\p_{\bn} u, v}_{H^{-s-\frac{3}{2}}(\p U), H^{s+\frac{3}{2}}(\p U)} - \inner{u,\p_{\bn} v}_{H^{-s-\frac{1}{2}}(\p U), H^{s+\frac{1}{2}}(\p U)}\\
    &= \lim_{n\to \infty}\left( \inner{\p_{\bn} \phi_n, \psi_n}_{H^{-s-\frac{3}{2}}(\p U), H^{s+\frac{3}{2}}(\p U)}-\inner{\phi_n,\p_{\bn} \psi_n}_{H^{-s-\frac{1}{2}}(\p U), H^{s+\frac{1}{2}}(\p U)}\right)\\\
    &=\lim_{n\to \infty} \int_{\p U}\p_{\bn} \phi_n \psi_n - \phi_n \p_{\bn} \psi_n dx.
  \end{align}
  Applying the Gauss-Green identity for smooth functions, we have 
  \begin{equation}
    \int_U \phi_n \Delta \psi_n - \Delta \phi_n \psi_n\, dx = \int_{\p U}\p_{\bn} \phi_n \psi_n - \phi_n \p_{\bn} \psi_n dx
  \end{equation}
  for all $n$. Thus \eqref{eq:Green_lap} follows immediately from passing to the limit $n\to\infty$ in \eqref{eq:Green_lap_1} and \eqref{eq:Green_lap_2}.
\end{proof}
\begin{lemma}
  \label{lm:trace_ela}
  Let $s\geq 0$ and $\bf\in \bH^{s+\frac{3}{2}}(\p U)$, $\bg\in \bH^{s+\frac{1}{2}}(\p U)$. Then there exists a vector function $\bu\in \bH^{s+2}(U)$ such that $\bu|_{\p U} = \bf$ and $\bs(\bu)\bn|_{\p U} = \bg$.
\end{lemma}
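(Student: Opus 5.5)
The plan is to reduce the statement to the classical trace (lifting) theorem for higher order traces, \cite[Chapter 1, Theorem 9.4]{Lions}. The key observation is that the conormal derivative $\bs(\bu)\bn$ on $\p U$ splits as the normal derivative $\p_{\bn}\bu$ plus tangential derivatives of $\bu|_{\p U}$. The tangential part is therefore already fixed once $\bu|_{\p U}=\bf$ is prescribed. Concretely, near $\p U$ I will write
\begin{equation}
  \bs(\bu)\bn = \mu\,\p_{\bn}\bu + (\mu+\lambda)(\p_{\bn}\bu\cdot\bn)\bn + \mathcal{T}(\bu|_{\p U}),
\end{equation}
where $\mathcal{T}$ is a first-order tangential differential operator on $\p U$ with smooth coefficients. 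To derive this, I decompose $\nabla\bu = \p_{\bn}\bu\otimes\bn + \nabla_{\p U}\bu$ and insert it into $\bs(\bu)\bn = \mu(\nabla\bu+\nabla\bu^T)\bn + \lambda(\div\bu)\bn$. The terms involving $\p_{\bn}\bu$ are then $\mu\p_{\bn}\bu + \mu(\p_{\bn}\bu\cdot\bn)\bn + \lambda(\p_{\bn}\bu\cdot\bn)\bn$, and everything else is tangential.

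The key step is to invert the pointwise linear map $B(x)\colon \bw\mapsto \mu\bw + (\mu+\lambda)(\bw\cdot\bn)\bn$ for $x\in\p U$. On vectors tangent to $\p U$ it acts as multiplication by $\mu$, and on $\bn$ it acts as multiplication by $2\mu+\lambda$. Since $\mu>0$ and $2\mu+\lambda>0$, $B$ is invertible, and its inverse
\begin{equation}
  B^{-1}\bw = \mu^{-1}\bw - \frac{\mu+\lambda}{\mu(2\mu+\lambda)}(\bw\cdot\bn)\bn
\end{equation}
has smooth coefficients on $\p U$. I then define the target normal derivative
\begin{equation}
  \bh := B^{-1}\bigl(\bg - \mathcal{T}\bf\bigr).
\end{equation}
Since $\bf\in\bH^{s+\frac32}(\p U)$, we have $\mathcal{T}\bf\in\bH^{s+\frac12}(\p U)$. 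Multiplication by smooth functions preserves Sobolev regularity, so $\bh\in\bH^{s+\frac12}(\p U)$.

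Next I apply the trace theorem componentwise. It gives $\bu\in\bH^{s+2}(U)$ with $\bu|_{\p U}=\bf$ and $\p_{\bn}\bu|_{\p U}=\bh$; the estimate $\norm{\bu}_{\bH^{s+2}(U)}\lesssim\norm{\bf}_{\bH^{s+\frac32}(\p U)}+\norm{\bg}_{\bH^{s+\frac12}(\p U)}$ comes for free. Substituting into the decomposition yields $\bs(\bu)\bn = B\bh + \mathcal{T}\bf = \bg$, as required.

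The main point needing care is justifying the decomposition rigorously at this regularity. This requires that the traces of $\nabla\bu$ make sense in $\bH^{s+\frac12}(\p U)$, which holds because $s+2>\frac32$. It also requires that the tangential part of the trace of $\nabla\bu$ equals the tangential gradient of $\bu|_{\p U}$. I plan to verify both first for $\bu\in C^\infty(\overline U)$, using a smooth extension of $\bn$ to a collar of $\p U$, and then pass to the limit by density and continuity of the trace maps. The remaining step is the elementary algebraic inversion of $B$ above, which uses only the positivity of $\mu$ and $2\mu+\lambda$.
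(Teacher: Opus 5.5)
Your proposal is correct and follows essentially the same route as the paper: both invert the pointwise map $B$ on $\p U$ (the paper's formula for $\p_{\bn} w_i$ is exactly your $B^{-1}$ applied to the corrected Neumann data), and both then realize the prescribed Dirichlet and normal-derivative traces via the trace theorem. The only organizational difference is that the paper first lifts $\bf$ to some $\bv\in\bH^{s+2}(U)$, replaces $\bg$ by $\bg-\bs(\bv)\bn$, and then solves for a correction $\bw$ with $\bw|_{\p U}=0$. On $\p U$ this gives $\nabla w_i=\p_{\bn}w_i\,\bn$, so your explicit tangential operator $\mathcal{T}$ never has to be written down.
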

\begin{proof}
  We write $\bf = (f_1,\cdots,f_n)$ with $f_i\in H^{s+\frac{3}{2}}(\p U)$ for $1\leq i\leq n$. According to the trace theorem for scalar functions, there exists $v_i \in H^{s+2}(U)$ such that $v_i|_{\p U} = f_i$ for $1\leq i\leq n$. Write $\bv = (v_1,\cdots,v_n)\in \bH^{s+2}(U)$ and $\widetilde{\bg} = \bg - \bs(\bv)\bn|_{\p U} = (\widetilde{g}_1,\cdots,\widetilde{g}_n)\in \bH^{s+\frac{1}{2}}(\p U)$.
  
  We claim that there exists $\bw: = (w_1,\cdots, w_n)\in \bH^{s+2}(U)$ such that $\bw|_{\p U} = 0$ and $\bs(\bw)\bn|_{\p U} = \widetilde{\bg}$. Indeed, by virtue of the trace theorem, there exists $\bw\in H^{s+2}(U)$ such that $\bw|_{\p U} = 0$. Let $y\in \p U$ be arbitrary. Since $w_i|_{\p U} = 0$, for any vector $\xi_y$ that tangential to the boundary $\p U$ at the point $y$ and $1\leq i\leq n$, there holds $\nabla w_i \cdot \xi_y = 0$. Therefore, for all $1\leq i\leq n$, $\nabla w_i$ is normal to $\p U$ and we have $\nabla w_i = \p_{\bn} w_i \bn$. Write $\bn = (\nu_1,\cdots, \nu_n)$, we have $\sum_{j=1}^n \nu_j^2 = 1$ since $\bn$ is a unit vector. Thus the $i-$th component of the system $\bs(\bw)\bn = \widetilde{\bg}$ reads 
  \begin{align}
    \label{eq:lm_trace_ela_1}
    (\bs(\bw)\bn)_i &= \mu \sum_{j=1}^n \left(\p_{\bn} w_i \nu_j\nu_j + \p_{\bn}w_j \nu_i\nu_j \right) +\lambda \sum_{j=1}^n \p_{\bn}w_j \nu_i\nu_j \\
    &=\mu \p_{\bn}w_i +(\mu +\lambda)\nu_i\sum_{j=1}^\infty \p_{\nu}w_j \nu_j = \widetilde{g}_i.
  \end{align}
  Multiplying $\nu_i$ to the both sides and summing over $i$ from $1$ to $n$ yields
  \begin{equation}
    \mu\sum_{i=1}^n\nu_i \p_{\bn}w_i + (\mu+\lambda)\sum_{j=1}^n \nu_j\p_{\bn}w_j = \sum_{i=1}^n \widetilde{g}_i\nu_i.
  \end{equation} 
  Therefore, we have 
  \begin{equation}
    \label{eq:lm_trace_ela_2}
    \sum_{j=1}^n \nu_j \p_{\bu}w_j = \frac{1}{2\mu+\lambda}\sum_{i=1}^n \widetilde{g}_i \nu_i.
  \end{equation}
  Inserting \eqref{eq:lm_trace_ela_2} into \eqref{eq:lm_trace_ela_1} gives 
  \begin{equation}
    \label{eq:lm_trace_ela_3}
    \p_{\bn}w_i|_{\p U} = \frac{1}{\mu}\left(\widetilde{g}_i - \frac{(\mu+\lambda)\nu_i}{2\mu+\lambda}\sum_{j=1}^n \widetilde{g}_j \nu_j \right).
  \end{equation}
  By virtue of the trace theorem, there exists $w_i\in H^{s+2}(U)$ such that $w|_{\p U} = 0$ and \eqref{eq:lm_trace_ela_3} is satisfied for $1\leq i\leq n$. And our claim is justified by $\bw = (w_1,\cdots,w_n)$. Finally, we can conclude that $\bu : = \bv+\bw \in \bH^{s+2}(U)$ and $\bu|_{\p U} = \bv|_{\p U} + \bw|_{\p U} = \bf$, $\bs(\bu)\bn|_{\p U} =  \bs(\bv)\bn|_{\p U} + \bs(\bw)\bn|_{\p U} =  \bg$.
\end{proof}

\begin{lemma}
  \label{lm:Green_ela}
  Let $s\geq 0$ and $\bu\in \tbH^{-s}(U)$, $\bv\in \bH^{s+2}(U)$, then we have
  \begin{align}
    \label{eq:Green_ela}
    &\inner{\div\bs(\bu), \bv}_{(\bH^{s+2}(U))^\ast,\bH^{s+2}(U)} - \inner{\bu,\div\bs(\bv)}_{(\bH^{s}(U))^\ast,\bH^{s}(U)}\\
    & = \inner{\bs(\bu) \bn, \bv}_{\bH^{-s-\frac{3}{2}}(\p U), \bH^{s+\frac{3}{2}}(\p U)} - \inner{\bu, \bs(\bv)\bn}_{\bH^{-s-\frac{1}{2}}(\p U), \bH^{s+\frac{1}{2}}(\p U)}.
  \end{align}
\end{lemma}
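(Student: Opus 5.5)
The plan is to mirror the proof of Lemma \ref{lm:Gauss_Green_lap}, using density of $C^\infty(\overline{U})^n$ in $\tbH^{-s}(U)$ and continuity of every term in \eqref{eq:Green_ela} with respect to the relevant norms. By definition, $\tbH^{-s}(U)=(\tH^{-s}(U))^n$ is the completion of $C^\infty(\overline{U})^n$ under the componentwise norm $\trinorm{\cdot}_{-s}$. I will take sequences $\bphi_k\in C^\infty(\overline{U})^n$ with $\bphi_k\to\bu$ in $\tbH^{-s}(U)$, and $\bpsi_k\in C^\infty(\overline{U})^n$ with $\bpsi_k\to\bv$ in $\bH^{s+2}(U)$. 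For the smooth pairs, the classical Betti (second Green) formula for the Lam\'e operator gives
\begin{equation*}
\int_U \div\bs(\bphi_k)\cdot\bpsi_k-\bphi_k\cdot\div\bs(\bpsi_k)\,dx=\int_{\p U}\bs(\bphi_k)\bn\cdot\bpsi_k-\bphi_k\cdot\bs(\bpsi_k)\bn\,d\cH^{n-1}.
\end{equation*}
This follows by integrating by parts twice: $\int_U\div\bs(\bphi)\cdot\bpsi=\int_{\p U}\bs(\bphi)\bn\cdot\bpsi-\int_U\bs(\bphi):\be(\bpsi)$, and $\bs(\bphi):\be(\bpsi)$ is symmetric in $\bphi,\bpsi$ because the coefficients are real and isotropic.

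Next I will pass to the limit term by term. The terms $\inner{\bphi_k,\div\bs(\bpsi_k)}$ and $\inner{\bphi_k,\bs(\bpsi_k)\bn}_{\p U}$ converge because $\bphi_k\to\bu$ in $(\bH^s(U))^\ast$ and in $\bH^{-s-\frac12}(\p U)$, which are components of the $\trinorm{\cdot}_{-s}$ norm. Meanwhile, $\div\bs(\bpsi_k)\to\div\bs(\bv)$ in $\bH^s(U)$, and $\bs(\bpsi_k)\bn\to\bs(\bv)\bn$ in $\bH^{s+\frac12}(\p U)$ by the trace theorem \cite[Theorem 1.5.1.2]{Grisvard_Ell}. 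Similarly, $\bpsi_k|_{\p U}\to\bv|_{\p U}$ in $\bH^{s+\frac32}(\p U)$.

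The main point is the convergence of $\div\bs(\bphi_k)$ in $(\bH^{s+2}(U))^\ast$ and of the conormal trace $\bs(\bphi_k)\bn$ in $\bH^{-s-\frac32}(\p U)$. Both traces $\bs(\bu)\bn|_{\p U}$ and $\div\bs(\bu)$ are understood as these limits. For this I would use the vector analogue of \eqref{eq:norm_equivalence}, namely \cite[Theorem 10.4.2]{Roitberg96}: the norm on $\tbH^{-s}(U)$ is equivalent to $\norm{\bu}_{(\bH^s)^\ast}+\norm{\div\bs(\bu)}_{(\bH^{s+2})^\ast}$, and the conormal trace extends continuously to $\bH^{-s-\frac32}(\p U)$.

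To obtain the latter directly, note that $\bs(\bphi)\bn$ is a first-order combination of the components of $\bphi|_{\p U}$ (tangential derivatives) and $\p_{\bn}\bphi|_{\p U}$, with smooth coefficients. Tangential derivatives map $H^{-s-\frac12}(\p U)\to H^{-s-\frac32}(\p U)$ continuously. Hence $\norm{\bs(\bphi)\bn}_{\bH^{-s-\frac32}(\p U)}\lesssim\trinorm{\bphi}_{-s}$, so $\bs(\bphi_k)\bn$ is Cauchy and converges. Likewise, $\div\bs(\bphi)$ is bounded in $(\bH^{s+2}(U))^\ast$ by the Betti formula itself, with $\bpsi$ arbitrary in $\bH^{s+2}(U)$. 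The right side is controlled by $\trinorm{\bphi}_{-s}\norm{\bpsi}_{\bH^{s+2}(U)}$, using the trace bounds and the duality pairings already established.

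Letting $k\to\infty$ on both sides of the smooth identity then yields \eqref{eq:Green_ela}.
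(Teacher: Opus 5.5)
Your proposal is correct and follows essentially the same route as the paper. Both approximate $\bu$ in $\tbH^{-s}(U)$ and $\bv$ in $\bH^{s+2}(U)$ by smooth fields, apply the classical Betti formula, and pass to the limit term by term. The only difference is one of detail: you justify directly that $\div\bs(\bphi_k)$ converges in $(\bH^{s+2}(U))^\ast$ (via the Betti formula) and that $\bs(\bphi_k)\bn$ converges in $\bH^{-s-\frac32}(\p U)$ (via tangential derivatives), whereas the paper cites Roitberg's Theorem 10.1.1 for the first and leaves the boundary convergence implicit.
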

\begin{proof}
  Since $(C^\infty(\overline{U}))^n$ is dense in $\tbH^{-s}(U)$, we can find a sequence $\{\bphi_n\}_{n=1}^\infty$ in $(C^\infty(\overline{U}))^n$ such that $\bphi_n$ converges to $\bu$ in $\tbH^{-s}(U)$ norm. Thanks to \cite[Theorem 10.1.1]{Roitberg96}, we have $\div \bs(\bphi_n)$ converges to $\div\bs(\bu)$ in $\bH^{s+2}(U)^\ast$. Let $\{\bpsi_n\}_{n=1}^\infty$ be a sequence in $(C^\infty(\overline{U}))^n$ that converges to $\bv$ in $\bH^{s+2}(U)$ as $n\to \infty$. Then consequently, we have $\div\bs(\bpsi_n)$ converges to $\div\bs(\bv)$ in $\bH^{s}(U)$ and 
  \begin{align}
    \label{eq:Green_ela_1}
    &\quad \lim_{n\to \infty}\int_U \div\bs(\bphi_n)\cdot\bpsi_n  - \div\bs(\bpsi_n)\cdot \bphi_n\, dx \\
    & = \lim_{n\to \infty}\left(  \inner{\div\bs(\bphi_n), \bpsi_n}_{(\bH^{s+2}(U))^\ast,\bH^{s+2}(U)} -  \inner{\bphi_n,\div\bs(\bpsi_n)}_{(\bH^{s}(U))^\ast,\bH^{s}(U)}\right)\\
    & = \inner{\div\bs(\bu), \bv}_{(\bH^{s+2}(U))^\ast,\bH^{s+2}(U)} - \inner{\bu,\div\bs(\bv)}_{(\bH^{s}(U))^\ast,\bH^{s}(U)}.
  \end{align}
  Due to the trace theorem, we have $\bpsi|_{\p U}$ converges to $\bv|_{\p U}$ in $\bH^{s+\frac{3}{2}}(\p U)$ and $\bs(\bpsi_n)\cdot \bn|_{\p U}$ converges to $\bs(\bv)\cdot \bn|_{\p U}$ in $\bH^{s+\frac{1}{2}}(\p U)$, respectively. Hence, there holds
  \begin{align}
    \label{eq:Green_ela_2}
    &\inner{\bs(\bu) \bn, \bv}_{\bH^{-s-\frac{3}{2}}(\p U), \bH^{s+\frac{3}{2}}(\p U)} - \inner{\bu, \bs(\bv)\bn}_{\bH^{-s-\frac{1}{2}}(\p U), \bH^{s+\frac{1}{2}}(\p U)}\\
    &= \lim_{n\to \infty}\left( \inner{\bs(\bphi_n) \bn, \bpsi_n}_{\bH^{-s-\frac{3}{2}}(\p U), \bH^{s+\frac{3}{2}}(\p U)}-\inner{\bphi_n,\bs(\bpsi_n)\bn}_{\bH^{-s-\frac{1}{2}}(\p U), \bH^{s+\frac{1}{2}}(\p U)}\right)\\\
    &=\lim_{n\to \infty} \int_{\p U}\bs(\bphi_n)\bn\cdot \bpsi_n - \bs(\bpsi_n)\bn\cdot \bphi_n \ dx.
  \end{align}
  The Gauss-Green formula for elastic system (see e.g. \cite[eq.(4.22)]{sofonea2012}) reads
  \begin{align}
    \label{eq:Green_ela_3}
    &\int_{\p U}\bs(\bphi_n)\bn\cdot \bpsi_n - \bs(\bpsi_n)\bn\cdot \bphi_n dx - \int_U \div\bs(\bphi_n)\cdot\bpsi_n  - \div\bs(\bpsi_n)\cdot \bphi_n dx \\
    & = \int_ U (2\mu \be(\bphi_n) + \lambda \tr(\be(\bphi_n)\boldsymbol{I_n})) : \be(\bpsi_n) - (2\mu \be(\bpsi_n) + \lambda \tr(\be(\bpsi_n)\boldsymbol{I_n})) : \be(\bphi_n)\, dx\\
    & = 0.
  \end{align}
  Combining \eqref{eq:Green_ela_1}, \eqref{eq:Green_ela_2}, \eqref{eq:Green_ela_3} and taking $n\to \infty$, we can obtain \eqref{eq:Green_ela}.
\end{proof}
\newpage
\bibliography{reference}
\bibliographystyle{plain}

\end{document}